\documentclass[12pt]{article}

\usepackage[a4paper, top=2cm, bottom=2cm, right=2.5cm, left=1.5cm]{geometry} % To set up page margins and paper size

\usepackage[T1]{fontenc}               % For correct font encoding (essential for hyphenation)
\usepackage[italian, english]{babel}     % For multilingual support (last language is the default)
\usepackage{lmodern}                   % Modern, high-quality vector fonts
\usepackage{textcomp}                  % Additional text symbols
\usepackage{microtype}                 % Improves micro-typography and text justification
\usepackage{xcolor}                    % For colored text (required by \textcolor and )

\usepackage{mathtools, amssymb, amsthm}

\usepackage{mathbbol}                       % Provides \mathbb for blackboard bold Greek and lowercase letters

\usepackage{mathrsfs}                  % Provides the calligraphic font with \mathscr{L}

\usepackage{dsfont}                    % Provides blackboard-bold "1" with \mathds{1} for indicator functions

\usepackage{accents}                   % Allows for multiple accents on a single character

\usepackage{aliascnt}                  % Required for the \nuovothm macro that handles theorem counters

\usepackage{xcolor}                    % To define and use colors
\definecolor{myurlcolor}{rgb}{0,0,0.4}
\definecolor{mycitecolor}{rgb}{0,0.5,0}
\definecolor{myrefcolor}{rgb}{0.5,0,0}

\usepackage{graphicx}                  % To include images with \includegraphics
\usepackage{tikz}                      % To create programmable vector graphics
\usepackage{tikz-cd}                   % To easily create commutative diagrams
\usetikzlibrary{graphs, decorations.pathmorphing, decorations.markings}
\usepackage{caption}                   % To customize figure and table captions

\usepackage[sort]{natbib}                    % For bibliography management (e.g., author-year styles)

\usepackage[pagebackref=true, colorlinks=true]{hyperref} 
\hypersetup{
    linkcolor=myrefcolor,
    citecolor=mycitecolor,
    urlcolor=myurlcolor,
}
\usepackage[capitalize, nameinlink]{cleveref}   % For "intelligent" cross-references (\cref)

\usepackage{changepage}                % For custom margins (used in smallproof environment)
\usepackage{enumitem}                  % To customize lists (enumerate, itemize)
\usepackage{braket}                    % For Dirac notation in physics/mathematics
\usepackage{physics}

\newcommand\nuovothm[3]{%
\newaliascnt{#1}{theorem}%
\newtheorem{#1}[#1]{#2}%
\aliascntresetthe{#1}%
\crefname{#1}{#2}{#3}%
\AtBeginEnvironment{#1}{\addvspace{10pt}}%
}

\nuovothm{lemma}{Lemma}{Lemmas}
\nuovothm{proposition}{Proposition}{Propositions}
\nuovothm{corollary}{Corollary}{Corollaries}
\nuovothm{definition}{Definition}{Definitions}
\nuovothm{remark}{Remark}{Remarks}
\nuovothm{assumption}{Assumption}{Assumptions}
\nuovothm{example}{Example}{Examples}
\nuovothm{question}{Question}{Questions}
\nuovothm{theo}{Theorem}{Theorems}

\newcommand{\be}{\begin{equation}}
\newcommand{\ee}{\end{equation}}

\newcommand{\red}[1]{{#1}}

\newcommand{\de}{\partial}
\DeclareMathOperator\T{\textup{\textbf{T}}}

\usepackage{todonotes}                 % For inserting notes and TODOs in the margin (disable with the [disable] option)

\usepackage{quiver}
\usepackage{newpxtext}
\usepackage{etoolbox}

\usepackage{parskip}
\newcommand{\Lie}{\pounds}
\renewcommand{\Im}{\operatorname{Im}}
\NewDocumentCommand{\pder}{ O{} m }{ \frac{\partial #1}{\partial #2} }
\newcommand{\R}{\mathbb{R}}
\renewcommand{\[}{\begin{equation}}
\renewcommand{\]}{\end{equation}}

\title{Regularization of singular time-dependent Lagrangian systems}
\author{
    \texorpdfstring{M. de Le\'on$^{1,3,4}$ \href{https://orcid.org/0000-0002-8028-2348}{\includegraphics[scale=0.7]{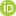}}}{M. De Leon},
    \texorpdfstring{R. Izquierdo-L\'opez$^{1,5}$ \href{https://orcid.org/0009-0007-8747-344X}{\includegraphics[scale=0.7]{ORCID.png}}}{R. Izquierdo-Lopez},
    \texorpdfstring{L. Schiavone$^{2,6}$ \href{https://orcid.org/0000-0002-1817-5752}{\includegraphics[scale=0.7]{ORCID.png}}}{L. Schiavone},
    \texorpdfstring{P. Soto-Mart\'in$^{1,7}$ \href{https://orcid.org/0009-0001-6958-3182}{\includegraphics[scale=0.7]{ORCID.png}}}{P. Soto-Martin} \\
    \footnotesize{$^{1}$\textit{Instituto de Ciencias Matemáticas, Campus Cantoblanco, Consejo Superior de Investigaciones Científicas,}} \\
    \footnotesize{\textit{Calle Nicolás Cabrera, 13–15, 28049, Madrid, Spain}} \\
    \footnotesize{$^{2}$\textit{Dipartimento di Matematica e Applicazioni Renato Caccioppoli, Università degli Studi di Napoli Federico II,}} \\
    \footnotesize{\textit{Via Cintia, Monte S. Angelo I, 80126, Napoli, Italy}} \\
    \footnotesize{$^{3}$\textit{Real Academia de Ciencias Exactas, Físicas y Naturales de España, C/Valverde, 22, Madrid 28004, Spain}}\\
    \footnotesize{$^{4}$\textit{e-mail: \texorpdfstring{\texttt{mdeleon@icmat.es}}{mdeleon@icmat.es}}} \\
    \footnotesize{$^{5}$\textit{e-mail: \texorpdfstring{\texttt{ruben.izquierdo@icmat.es}}{ruben.izquierdo@icmat.es}}} \\
    \footnotesize{$^{6}$\textit{e-mail: \texorpdfstring{\texttt{luca.schiavone@unina.it}}{luca.schiavone@unina.it}}} \\
    \footnotesize{$^{7}$\textit{e-mail: \texorpdfstring{\texttt{pablo@soto.es}}{pablo@soto.es}}}\\
}

\begin{document}
\maketitle

\begin{abstract}
One approach to studying the dynamics of a singular Lagrangian system is to attempt to regularize it, that is, to find an equivalent and regular system. 
%%%
In the case of time-independent singular Lagrangians, an approach due to \textit{A. Ibort} and \textit{J. Marín-Solano} is to use the coisotropic embedding theorem proved by \textit{M.J. Gotay} which states that any pre-symplectic manifold can be coisotropically embedded in a symplectic manifold. 
%%%
In this paper, we revisit these results and provide an alternative approach—also based on the coisotropic embedding theorem—that employs the Tulczyjew isomorphism and almost product structures, and allows for a slight generalization of the construction. In this revision, we also prove uniqueness of the Lagrangian regularization to first order.
%%%
Furthermore, we extend our methodology to the case of time-dependent singular Lagrangians.
%%%
\end{abstract}
\tableofcontents
\section{Introduction}

One of the greatest successes of symplectic geometry has been to serve as a setting for Hamiltonian mechanics as well as its Lagrangian description. Indeed, a Hamiltonian function is just a function $H$ defined on the cotangent bundle $\T^*Q$ of the configuration manifold $Q$, such that the Hamiltonian dynamics is provided by the corresponding Hamiltonian vector field $X_H$ obtained using the canonical symplectic form $\omega_Q$ on $\T^*Q$ \cite{AM,dLR1989}, say
$$
i_{X_H} \, \omega_Q =  \dd H \,.
$$
In the Lagrangian picture, given a Lagrangian function $L$ on the tangent bundle $\T Q$, one obtains a differential 2-form $\omega_L$ such that the equation
$$
i_{\xi_L} \, \omega_L =  \dd E_L
$$
provides the Euler-Lagrange vector field $\xi_L$ \cite{dLR1989}. 

We need to make two clarifications about this last equation: (1) $\omega_L$ is symplectic if and only if the Lagrangian $L$ is regular (the Hessian matrix of $L$ with respect to velocities is regular), and (2) $\xi_L$ is a second-order differential equation (SODE, for short) such that its solutions (the projections onto $Q$ of its integral curves) are the solutions of the Euler-Lagrange equations.
Lagrangians giving rise to degenerate 2-forms $\omega_L$ are usually referred to as \textit{singular}.
%%%
\noindent The Legendre transformation $\operatorname{Leg}: \T Q \longrightarrow \T^*Q$ connects both descriptions in a natural way (see \cite{godbillon,dLR1989} for more details).

One of the most interesting contributions in \textit{P.A.M. Dirac}'s and \textit{P.G. Bergmann}'s works was the introduction of the constraint algorithm for dealing with singular Lagrangians, now known as the Dirac-Bergmann algorithm (see \cite{dirac,bergmann,dirac1,dirac2}). That algorithm has been developed in geometric terms using the notion of pre-symplectic manifolds by \textit{M.J. Gotay} and collaborators \cite{gotay0,gotay_nester1,gotay_nester2}, incorporating in the Lagrangian picture the problem of the second-order differential equation, a remarkable distinction between the Lagrangian and the Hamiltonian descriptions.

The above algorithm has also been constructed for the case of singular Lagrangians depending explicitly on time \cite{chinea}. In that case, the geometric scenarios are usually taken to be $\T Q \times \mathbb{R}$ and $\T^*Q \times \mathbb{R}$, and now, the singular case uses the notion of pre-cosymplectic structure. More generally, we may regard time-dependent (or non-autonomous) Lagrangians as functions defined in $J^1 \pi$, where $\pi \colon \mathbf{Q} \longrightarrow \mathbb{R}$ is a (not necessarily trivialized) fiber bundle (see \cite{Krupkova-GeometryOVE-1997,manolo1,manolo2}).

In both time-independent and time-dependent cases, a Poisson bracket that provides the dynamics (the so-called Dirac bracket) and the dynamics themselves, modulo the kernel of the forms, can be found on the so-called \textit{final constraint submanifold} selected by the algorithm.

On the other hand, coisotropic submanifolds play an important role in classical mechanics and field theories because they allow for the development of a procedure for reducing dynamics. Indeed, a foundational result due to \textit{A. Weinstein} \cite{alan0} (see also \cite{AM}) establishes that the quotient space of a coisotropic submanifold by its characteristic foliation naturally inherits a symplectic structure, providing a rigorous geometric setting for Hamiltonian reduction. The reduction is then accomplished when we consider dynamics interpreted as a Lagrangian submanifold.
%%%

By taking the opposite road (unfolding vs reduction), coisotropic submanifolds are also relevant in the context of regularization problems. 
In fact, the Dirac-Bergmann algorithm is not the only way we could use to regularize a singular Lagrangian. Indeed, \textit{M.J. Gotay} \cite{gotay1} proved a \textit{coisotropic embedding theorem}, stating that any pre-symplectic manifold can be embedded into a symplectic manifold as a coisotropic submanifold, and that this embedding is unique in a neighborhood of the original manifold. This result has been generalized to many other relevant geometric scenarios by the authors \cite{zoo, Schiavone-CoisotropicMultisymplectic-2026}, by taking advantage of the point of view developed in \cite{luca}. Using the coisotropic embedding theorem, as well as a natural classification of Lagrangians \cite{frans}, \textit{A. Ibort} and \textit{J. Marín-Solano} \cite{ibort-marin}  were able to develop a regularization method for certain types of Lagrangian systems (called type II Lagrangians). This classification for Lagrangian functions has been extended by \textit{M. de Le\'on et al.} (see \cite{mello}) and later reconsidered again by \textit{A. Ibort} and \textit{J. Mar{\'\i}n-Solano} \cite{marin2}.
%%%

The main objective of this paper is to advance this programme of regularization of singular Lagrangian systems, which will culminate in the case of classical field theories in a future work.
Thus, we begin by carefully re-examining the results of \textit{A. Ibort} and \textit{J. Marín-Solano} in the pre-symplectic case, using a new methodology based on the use of almost product structures and Tulczyjew triples \cite{lodekh,lodekl}. Most notably, we employ a (to our knowledge) novel generalization of the Tulczyjew triples adapted to a foliated manifold. The construction is presented in \cref{Subsec: A Tulczyjew isomorphism for foliations}. For completeness, let us recall that the use of almost product structures to deal with singular Lagrangian systems was introduced in \cite{almostproduct} (see also \cite{dLR1989}). This approach has allowed us to clarify some of these results and introduce an alternative for explicitly constructing a regular Lagrangian equivalent to the original singular one by using an auxiliary connection. In particular, we would like to highlight the following results in the pre-symplectic case:
\begin{itemize}
    \item In \cref{thm:Lagrangian_coisotropic_embedding_symplectic}, we give a global description of the Lagrangian in the regularized manifold, proving that the regularization provides a \emph{global} Lagrangian, and not only locally so. This description is more general than the one previously found in the literature. Indeed, it depends on strictly less geometric ingredients (a connection instead of a metric). 
    \item Finally, this regularization is proved to be unique and independent of all choices to first order in \cref{thm:uniqueness_sympelctic_first_order}, a result which was not present in the literature.
\end{itemize}

Next, we studied the case of singular Lagrangians explicitly dependent on time, obtaining similar results using pre-cosymplectic geometry (namely \cref{thm:Lagrangian_pre-cosymplectic_regularization} and \cref{thm:precosymplectic_Lag_uniqueness}, respectively). Most importantly, the Reeb vector field needs to be taken into consideration, which is a significant difference from the autonomous case. In addition, our methodology does not explicitly relies on the mentioned classification of Lagrangians (even if it requires some conditions, equivalent to those used in \cite{ibort-marin}, to be fulfilled by the Lagrangian), and so opens a clear path to extend it to the case of singular Lagrangian field theories.

The paper is structured as follows. Following the introduction, we devote \cref{Sec: Preliminaries} to recall some well-known notions and results on distributions and foliations, as well as to prove an extension of Tulczyjew's triple to the case of foliations defined on a smooth manifold; tangent structures and stable tangent structures are also reviewed. \cref{Sec:Regularization of autonomous systems} is devoted to reconsidering the regularization of singular autonomous Lagrangians and developing a new technique that differs from that developed in \cite{ibort-marin}, as mentioned above. In \cref{Sec:Regularization of non-autonomous systems}, we extend this regularization scheme to the context of singular time-dependent Lagrangians. The construction presented in \cref{Sec:Regularization of non-autonomous systems} is illustrated by studying the trivialized case and degenerate metrics. Finally, we include a section on conclusions and an outlook for future work to be carried out.

\section{Preliminaries}
\label{Sec: Preliminaries}

\subsection{Distributions and foliations}
\label{Subsec: Distributions and foliations}

In this section, we recall the basic definitions and geometric properties of distributions and foliations on smooth manifolds. 
%%%

\begin{definition}[\textsc{Regular Distribution}]
Let $Q$ be a smooth manifold of dimension $n$. 
%%%
A \textbf{distribution} $D$ on $Q$ of rank $r$ is a smooth assignment of an $r$-dimensional subspace $D_q \subset \T_q Q$ to each point $q \in Q$. 
%%%
The distribution is said to be \textbf{regular} if the rank $r$ is constant over $Q$.
\end{definition}

\begin{definition}[\textsc{Involutivity and Integrability}]
A distribution $D$ is said to be \textbf{involutive} if it is closed under the Lie bracket, i.e.,
\be
[\, X, Y \,] \in D \,,\quad \forall X, Y \in \Gamma(D) \,,
\ee
where $\Gamma(D)$ denotes the space of smooth sections of $D$.
%%%
A distribution is said to be \textbf{integrable} if, for every point $q \in Q$, there exists an integral submanifold of $D$ passing through $q$ (i.e., a submanifold whose tangent space at each point coincides with $D$).
\end{definition}

\begin{remark}
Sometimes, one distinguishes between maximal integral submanifolds and integral submanifolds, when maximal dimension means the rank of the involutive distribution.
    \end{remark}

\noindent The fundamental link between these concepts is provided by the Frobenius Theorem \cite{warner}.
%%%

\begin{theo}[\textsc{Frobenius Theorem}]
A regular distribution $D$ on a smooth manifold $Q$ is integrable if and only if it is involutive.
\end{theo}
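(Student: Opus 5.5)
The easy direction goes first. Suppose $D$ is integrable, and take $X,Y\in\Gamma(D)$ and a point $q\in Q$. Let $N$ be an integral submanifold through $q$. Near $q$, the restrictions $X|_N$ and $Y|_N$ are tangent to $N$, so they define vector fields on $N$. Their bracket on $N$ is again tangent to $N$. Because the inclusion $\iota\colon N\hookrightarrow Q$ is an immersion, $X|_N$ is $\iota$-related to $X$ and $Y|_N$ is $\iota$-related to $Y$. Naturality of the Lie bracket under related vector fields then gives $[X,Y]_q=\iota_*[X|_N,Y|_N]_q\in \T_qN=D_q$. Since $q$ was arbitrary, $D$ is involutive.

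For the converse, I will prove the stronger local statement: around each point there are coordinates $(x^1,\dots,x^n)$ in which $D=\operatorname{span}\{\partial/\partial x^1,\dots,\partial/\partial x^r\}$. Integral submanifolds are then the slices $x^{r+1}=\text{const}$.

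\textbf{Step 1: a commuting local frame.} Fix $q$ and choose coordinates $(y^i)$ centered at $q$. After renumbering, the projection of $D_q$ onto the span of $\partial/\partial y^1,\dots,\partial/\partial y^r$ is an isomorphism, and by continuity this persists near $q$. Hence $D$ has a unique local frame of the form
\[
X_a=\frac{\partial}{\partial y^a}+\sum_{\mu>r} c_a^\mu\,\frac{\partial}{\partial y^\mu}, \qquad a=1,\dots,r.
\]
Each bracket $[X_a,X_b]$ has no components along $\partial/\partial y^c$ for $c\le r$. Involutivity says $[X_a,X_b]$ is a combination of the $X_c$, and the coefficient of $X_c$ must equal its $\partial/\partial y^c$-component, which is zero. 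Therefore $[X_a,X_b]=0$.

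\textbf{Step 2: straightening the frame.} Let $\phi^a_t$ denote the local flows of the $X_a$. Commuting vector fields have commuting flows. Define
\[
\Psi(t^1,\dots,t^r,z^{r+1},\dots,z^n)=\phi^1_{t^1}\circ\cdots\circ\phi^r_{t^r}\bigl(0,\dots,0,z^{r+1},\dots,z^n\bigr).
\]
Since the flows commute, any $\phi^a$ can be moved to the front of the composition. This gives $\Psi_*(\partial/\partial t^a)=X_a\circ\Psi$. At the origin, $\Psi_*$ sends $\partial/\partial z^\mu$ to $\partial/\partial y^\mu$, so $\Psi_*$ is an isomorphism there. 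The inverse function theorem then makes $\Psi^{-1}$ a chart with the required property.

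\textbf{Main obstacle.} I expect the key step to be the flow-commutation argument. It relies on the lemma that $[X,Y]=0$ implies $\phi^X_s\circ\phi^Y_t=\phi^Y_t\circ\phi^X_s$. I would prove that lemma by showing $\tfrac{d}{dt}(\phi^X_{-t})_*Y=(\phi^X_{-t})_*[X,Y]=0$. This identity says $Y$ is invariant under $\phi^X$, so $\phi^X_s$ maps integral curves of $Y$ to integral curves of $Y$, which gives the commutation. Throughout, one must keep track of the domains of the local flows, which are only defined near $q$.
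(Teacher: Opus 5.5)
Your argument is correct. Note that the paper gives no proof of this statement: Frobenius is quoted as a standard result with a pointer to Warner, so there is no internal argument to match.

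What you give is the commuting-frame proof. You normalize a local frame to $X_a=\partial/\partial y^a+c_a^\mu\,\partial/\partial y^\mu$, use involutivity to force $[X_a,X_b]=0$, and then straighten all $r$ fields at once with the composite flow $\Psi$.

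The proof in the reference the paper cites goes instead by induction on the rank. It straightens a single field, $X_1=\partial/\partial y^1$, by the flow-box theorem. It then replaces $X_i$ by $X_i-(X_i y^1)X_1$, so that the remaining fields are tangent to the slice $y^1=0$, and applies the induction hypothesis on that slice. Involutivity enters only at the end: a linear ODE in $y^1$ with zero initial data shows that the transverse coordinates are annihilated by $D$.

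The trade-off is as follows.
\begin{itemize}
\item Your route needs the extra lemma that vanishing brackets imply commuting local flows, which you correctly flag as the main technical point. In exchange it avoids the induction and the slice bookkeeping and produces the flat chart in one step. It also proves along the way that commuting, pointwise independent vector fields can be straightened simultaneously.
\item The inductive route needs only the one-field flow box.
\end{itemize}

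Two small points to tighten.
\begin{itemize}
\item In the easy direction, $N$ may be merely immersed. Restrict first to a neighbourhood of $q$ in $N$ on which $\iota$ is an embedding; only then is $X|_N$ a smooth vector field on $N$.
\item At the origin, invertibility of $\Psi_*$ uses more than $\Psi_*(\partial/\partial z^\mu)=\partial/\partial y^\mu$. You also need $\Psi_*(\partial/\partial t^a)=X_a(q)$, whose $\partial/\partial y^b$-components with $b\le r$ are $\delta_a^b$. The Jacobian is therefore block-triangular with identity diagonal blocks.
\end{itemize}
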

%%%

\begin{definition}[\textsc{Regular Foliation}]
A \textbf{regular foliation} $\mathcal{F}$ of dimension $r$ (and codimension $k = n-r$) on a manifold $Q$ is a partition of $Q$ into a family of disjoint, connected, immersed submanifolds $\{\mathcal{L}_\alpha\}_{\alpha \in A}$ called \textbf{leaves}, such that:
\begin{enumerate}
    \item For every $q \in Q$, there exists a unique leaf $\mathcal{L}_q$ containing $q$.
    \item Around every point $q \in Q$, there exists a local coordinate chart $(U, \phi)$ with coordinates $(x^1, \dots, x^k, f^1, \dots, f^r)$ such that the connected components of the intersection of any leaf with $U$ are described by the equations
    \be
    x^a = c^a \,, \quad a = 1, \dots, k \,,
    \ee
    where the constants $c^a$ determine the local leaf.
\end{enumerate}
%%%
Such a chart is called a \textbf{foliated chart} or \textbf{adapted chart}.
\end{definition}

\begin{remark}[\textsc{Relation to Distributions}]
Every regular foliation $\mathcal{F}$ defines a unique involutive regular distribution $D$, where $D_q = \T_q \mathcal{L}_q$. 
%%%
Conversely, by the Frobenius Theorem, every regular involutive distribution $D$ generates a regular foliation $\mathcal{F}_D$ whose leaves are the maximal integral manifolds of $D$. 
%%%
\end{remark}

\noindent In an adapted coordinate system $(x^a, f^A)$, the distribution $D$ is locally spanned by the vector fields
\be
D \,=\, \operatorname{span}\left\{ \frac{\partial}{\partial f^A} \right\}_{A=1,\dots,r} \,.
\ee
The coordinates $x^a$ serve as local coordinates on the space of leaves when it exists as a quotient manifold, while $f^A$ serve as coordinates along the leaf.

\begin{definition}[\textsc{Tangent Bundle of a Foliation}]
Let $\mathcal{F}$ be a regular foliation on $Q$. The \textbf{tangent bundle of the foliation}, denoted by $\mathscr{T}\mathcal{F}$, is the disjoint union of the tangent bundles of its leaves:
\be
\mathscr{T}\mathcal{F} \,:=\, \bigcup_{L \in \mathcal{F}} \T L \,.
\ee
This set carries the structure of a smooth vector bundle of rank $r$ over $Q$, and it is isomorphic to the distribution $D \subset \T Q$ associated with $\mathcal{F}$.
\end{definition}

\noindent Given an adapted chart $(x^a, f^A)$ on $Q$, we induce local coordinates on $\mathscr{T}\mathcal{F}$ denoted by:
\be
\left\{\, x^a,\, f^A,\, {v_f}^A \,\right\}_{a=1,\dots,k;\, A=1,\dots,r} \,.
\ee
Here, a point in $\mathscr{T}\mathcal{F}$ is locally represented as vector $v = {v_f}^A \frac{\partial}{\partial f^A}$ attached to the point $(x^a, f^A)$. Note that the "transverse velocities" are identically zero, ${v_x}^a = 0$.

\begin{definition}[\textsc{Cotangent Bundle of a Foliation}]
The \textbf{cotangent bundle of the foliation}, denoted by $\mathscr{T}^*\mathcal{F}$, is the disjoint union of the cotangent bundles of its leaves:
\be
\mathscr{T}^*\mathcal{F} \,:=\, \bigcup_{L \in \mathcal{F}} \T^* L \,.
\ee
It carries the structure of a smooth vector bundle of rank $r$ over $Q$. It is canonically isomorphic to the dual of the distribution $D$, say $D^*$.
\end{definition}
%%%

\noindent In the adapted chart defined above, local coordinates on $\mathscr{T}^*\mathcal{F}$ are denoted by:
\be
\left\{\, x^a,\, f^A,\, {p_f}_A \,\right\}_{a=1,\dots,k;\, A=1,\dots,r} \,.
\ee
A point in $\mathscr{T}^*\mathcal{F}$ is locally represented by a covector $\alpha = {p_f}_A \dd f^A$ restricted to the tangent space of the leaf.
%%%

\begin{remark}
It is important to note that the sets $\mathscr{T}\mathcal{F}$ and $\mathscr{T}^*\mathcal{F}$ are smooth vector bundles of rank $r$ over the base manifold $Q$, being a subbundle and a quotient bundle of the tangent and cotangent bundles of $Q$, respectively.
%%%

\noindent Indeed, by definition, the fiber of $\mathscr{T}\mathcal{F}$ at $q$ is the tangent space to the leaf passing through $q$, i.e., $(\mathscr{T}\mathcal{F})_q = \T_q \mathcal{L}_q$. 
%%%
Since $\mathcal{F}$ is generated by the regular distribution $D$, we have $\T_q \mathcal{L}_q = D_q$. 
%%%
Thus, $\mathscr{T}\mathcal{F}$ coincides with the total space of the distribution $D$.
%%%
Since $D$ is a regular distribution, it is by definition a vector subbundle of $\T Q$.
%%%

\noindent On the other hand, the fiber of $\mathscr{T}^*\mathcal{F}$ at $q$ is the dual space of the tangent space to the leaf, i.e., $(\mathscr{T}^*\mathcal{F})_q = (\T_q \mathcal{L}_q)^* = {D_q}^*$.
%%%
Consider the \textit{annihilator} of the distribution, denoted by $D^\circ \subset \T^* Q$, which is the subbundle of covectors that vanish on $D$. 
%%%
We have a short exact sequence of vector bundles over $Q$:
\be
0 \longrightarrow D^\circ \longrightarrow \T^* Q \xrightarrow{\;\;\pi\;\;} \mathscr{T}^*\mathcal{F} \longrightarrow 0 \,,
\ee
where the map $\pi$ is the restriction of a covector in $\T^*_q Q$ to the subspace $D_q$.
%%%
Since this restriction is surjective with kernel $D^\circ$, by the first isomorphism theorem for vector spaces applied fiber-wise, we have the canonical isomorphism:
\be
\mathscr{T}^*\mathcal{F} \,\cong\, \T^* Q \,/\, D^\circ \,.
\ee
Thus, $\mathscr{T}^*\mathcal{F}$ carries the structure of a (quotient) vector subbundle of $\T^* Q$.
%%%
\end{remark}

\begin{remark}
As vector bundles over $Q$, $\mathscr{T}\mathcal{F}$ and $\mathscr{T}^*\mathcal{F}$ are dual to each other.
%%%
The duality pairing
\be
\langle \,\cdot\,,\, \cdot\, \rangle \;\colon\; \mathscr{T}^*\mathcal{F} \times_Q \mathscr{T}\mathcal{F} \to \mathbb{R}
\ee
is defined naturally by the evaluation map. Let $v \in (\mathscr{T}\mathcal{F})_q$ and $\alpha \in (\mathscr{T}^*\mathcal{F})_q$. Since $\alpha$ is a linear functional on $(\mathscr{T}\mathcal{F})_q$, the pairing is simply $\langle \alpha, v \rangle_{\mathcal{F}} = \alpha(v)$.

\noindent In adapted local coordinates $(x^a, f^A)$, a vector $v \in \mathscr{T}\mathcal{F}$ reads 
\be
v = {v_f}^A \frac{\partial}{\partial f^A} \,.
\ee
%%%
A covector in the ambient space $\T^* Q$ reads $\widetilde{\alpha} = p_a \dd x^a + {p_f}_A \dd f^A$.
%%%
Since the 1-forms $\dd x^a$ annihilate the distribution $D = \operatorname{span}\{\frac{\partial}{\partial f^A}\}$, they form a local basis for the annihilator $D^\circ$.
%%%
Therefore, the equivalence class in the quotient $\T^* Q / D^\circ$ (which represents the element $\alpha \in \mathscr{T}^*\mathcal{F}$) is determined solely by the components ${p_f}_A$.
%%%
The pairing is thus given explicitly by:
\be \label{Eq: pairing foliations}
\langle\, \alpha,\, v \,\rangle \,=\, \left( p_a \dd x^a + {p_f}_A \dd f^A \right) \left( {v_f}^B \frac{\partial}{\partial f^B} \right) \,=\, {p_f}_A {v_f}^A \,.
\ee
\end{remark}

\begin{definition}[\textsc{Almost-Product Structure}]\cite{dLR1989}
\label{Def: Almost Product Structure}
An \textbf{almost-product structure} on a smooth manifold $Q$ is a smooth point-wise splitting of its tangent bundle into a direct sum of two complementary distributions. That is, for each $q \in Q$, the tangent space decomposes as:
\be
\T_q Q \,=\, D_q \oplus H_q \,,
\ee
where $D$ and $H$ are smooth subbundles of $\T Q$.
%%%

\noindent Equivalently, such a splitting is uniquely characterized by a smooth $(1,1)$-tensor field $P \in \Gamma(\T^* Q \otimes \T Q)$ that is idempotent, namely $P \circ P = P$. 
%%%
This tensor acts as a projector onto $D$ along $H$, meaning that $\operatorname{Im}(P) = D$ and $\operatorname{ker}(P) = H$.
%%%

Given a pre-existing regular distribution $D$ on $Q$, an almost-product structure $P$ is said to be \textbf{adapted} to $D$ if its image coincides with the distribution, $\operatorname{Im}(P) = D$. 
%%%
In this scenario, choosing $P$ is equivalent to smoothly assigning the complementary horizontal distribution $H = \operatorname{ker}(P)$.
%%%
If the distribution $D$ is integrable, it generates a regular foliation $\mathcal{F}$. 
%%%
Let $(x^a, f^A)$ be a system of local adapted coordinates, such that $D = \operatorname{span}\left\{\frac{\partial}{\partial f^A}\right\}$. 
%%%
Since $P$ must act as the identity on its image, we have $P\left(\frac{\partial}{\partial f^A}\right) = \frac{\partial}{\partial f^A}$. 
%%%
Thus, the most general local expression for an almost-product structure $P$ adapted to $D$ is given by:
\be \label{Eq: almost product adapted}
P \,=\, \left( \dd f^A - P^A_a(x, f) \dd x^a \right) \otimes \frac{\partial}{\partial f^A} \,,
\ee
where the local functions $P^A_a(x, f)$ uniquely determine the choice of the complementary distribution $\operatorname{ker}(P) = \operatorname{span}\left\{\frac{\partial}{\partial x^a} + P^A_a \frac{\partial}{\partial f^A}\right\}$.
\end{definition}

\subsection{A Tulczyjew isomorphism for foliations}
\label{Subsec: A Tulczyjew isomorphism for foliations}

Since it will be relevant for the whole manuscript, we devote this section to adapting the notion of one of the \textit{Tulczyjew isomorphisms} to the context of regular foliations.
%%%
The isomorphism we are interested in is the one existing between the iterated bundles $\mathbf{TT}^* Q$ and $\mathbf{T}^* \mathbf{T}Q$ over a smooth differential manifold $Q$ that we recall in the following lines \cite{lodekh,lodekl,dLR1989}.
%%%

Consider an $n$-dimensional smooth differential manifold $Q$ with the system of local coordinates
\be
\left\{\, q^j \,\right\}_{j=1,...,n} \,.
\ee
%%%
Its tangent bundle
\be
\pi_{Q} \;\;:\;\; \mathbf{T}Q \to Q \,,
\ee
inherits the natural system of local coordinates
\be
\left\{\, q^j,\, \dot{q}^j \,\right\}_{j=1,...,n} \,,
\ee
where
\be
\pi_{Q}(q^j,\, \dot{q}^j) \,=\, q^j \,.
\ee
%%%
Now, consider the double bundle $\mathbf{TT}Q$ with the system of local coordinates
\be
\left\{\, q^j,\, \dot{q}^j,\, {v_q}^j,\, {v_{\dot{q}}}^j \,\right\}_{j=1,...,n} \,.
\ee
%%%
It can be given two structures of vector bundle over $\mathbf{T}Q$, namely
\be
\begin{tikzcd}
& \mathbf{TT}Q \arrow[dl, swap, "\pi_{\mathbf{T}Q}"] \arrow[dr, "T\pi_{Q}"] & \\
\mathbf{T}Q \arrow[dr, swap, "\pi_Q"] & & \mathbf{T}Q \arrow[dl, "\pi_{Q}"] \\
& Q &
\end{tikzcd}
\ee
where 
\be
\pi_{\mathbf{T}Q}(q^j,\, \dot{q}^j,\, {v_q}^j,\, {v_{\dot{q}}}^j) \,=\, (q^j,\, \dot{q}^j) \,,
\ee
and
\be
T \pi_Q (q^j,\, \dot{q}^j,\, {v_q}^j,\, {v_{\dot{q}}}^j) \,=\, (q^j,\, {v_q}^j) \,.
\ee
%%%
There exists a natural isomorphism of fiber bundles of $\pi_{\mathbf{T}Q}$ and $T \pi_{Q}$ defined categorically as the unique double vector bundle isomorphism $\delta: \mathbf{TT}Q \to \mathbf{TT}Q$ that interchanges the two vector bundle projections---meaning it satisfies $\pi_{\mathbf{T}Q} \circ \delta = T\pi_Q$ and $T\pi_Q \circ \delta = \pi_{\mathbf{T}Q}$, while acting as the identity map on the core of the double vector bundle (which is canonically isomorphic to $\mathbf{T}Q$).
%%%
In local coordinates, it reads:
\be
\delta \;\;:\;\; \mathbf{TT}Q \to \mathbf{TT}Q \;\;:\;\; (q^j,\, \dot{q}^j,\, {v_q}^j,\, {v_{\dot{q}}}^j) \mapsto (q^j,\, {v_q}^j,\, \dot{q}^j,\, {v_{\dot{q}}}^j) \,.
\ee
%%%

\noindent Consider the iterated bundle $\mathbf{T}^* \mathbf{T}Q$, with the system of local coordinates
\be
\left\{\, q^j,\, \dot{q}^j,\, {p_q}_j,\, {p_{\dot{q}}}_j \,\right\}_{j=1,...,n} \,.
\ee
%%%
It is the dual vector bundle to $\pi_{\mathbf{T}Q}$ with respect to the pairing
\be
\langle\, \rho,\, \xi \,\rangle \,=\, {p_q}_j {v_q}^j + {p_{\dot{q}}}_j {v_{\dot{q}}}^j \,,
\ee
where $\rho$ is an element of $\mathbf{T}^* \mathbf{T}Q$ and $\xi$ is an element of $\mathbf{TT}Q$.
%%%

\noindent On the other hand, the iterated bundle $\mathbf{TT}^* Q$, with the system of local coordinates
\be
\left\{\, q^j,\, p_j,\, \dot{q}^j,\, \dot{p}_j \,\right\}_{j=1,...,n} \,,
\ee
is canonically the dual vector bundle to $T\pi_{Q}$.
%%%
The duality pairing is defined intrinsically as the tangent lift of the canonical pairing between $\mathbf{T}^*Q$ and $\mathbf{T}Q$.
%%%
Specifically, if we consider a curve $\gamma(t) = (q^j(t),\, \dot{q}^j(t))$ in $\mathbf{T}Q$ and a curve $\lambda(t) = (q^j(t),\, p_j(t))$ in $\mathbf{T}^*Q$ projecting to the same base curve on $Q$, the pairing is the time derivative of the contraction $\langle \lambda(t), \gamma(t) \rangle$.
%%%
In local coordinates, this operation reads:
\be
\frac{\dd}{\dd t} (p_j(t) \dot{q}^j(t)) \,=\, \dot{p}_j(t) \dot{q}^j(t) + p_j(t) \ddot{q}^j(t) \,,
\ee
%%%
yielding the pairing:
\be
\langle \, \eta,\, \psi\, \rangle' \,=\, \dot{p}_j {v_q}^j + p_j {v_{\dot{q}}}^j \,,
\ee
where $\eta$ is an element of $\mathbf{TT}^* Q$ and $\psi$ is an element of $\mathbf{TT}Q$.
%%%

\noindent The transpose map of $\delta$ with respect to the pairing $\langle\,\cdot\,,\,\cdot\,\rangle'$ is the Tulczyjew isomorphism between $\mathbf{TT}^* Q$ and $\mathbf{T}^* \mathbf{T}Q$.
%%%
It reads locally
\be
\alpha \,=\, \delta^T \;\;:\;\; \mathbf{TT}^* Q \to \mathbf{T}^*\mathbf{T}Q \;\;: \;\; (q^j,\, p_j,\, \dot{q}^j,\, \dot{p}_j) \mapsto (q^j,\, \dot{q}^j,\, {p_q}_j \,=\, \dot{p}_j,\, {p_{\dot{q}}}_j \,=\, p_j) \,.
\ee
%%%

Now, given a regular foliation $\mathcal{F}_{\undertilde{K}}$ on $Q$ generated by a regular integrable distribution $\undertilde{K}$ on $Q$, let us consider the tangent distribution $K \,=\,{ \undertilde{K}^C}$. That is, the distribution generated by the vertical and the complete lifts of the vector fields generating $\undertilde{K}$\footnote{We refer to \cite{yanoishihara,dLR1989} or to \cref{Def: lifts vector fields} for the definition of vertical and complete lifts of a vector field.}. 
%%%
{It is a regular distribution} on $\T Q$ that provides a regular foliation $\mathcal{F}_K$ on $\T Q$.
%%%

\noindent Denote by
\be
\left\{\, x^a,\, f^A \,\right\}_{a=1,...,l;A=1,...,r} \,,
\ee
a system of local coordinates on $Q$ adapted to the foliation $\mathcal{F}_{\undertilde{K}}$, and by
\be
\left\{\, x^a,\, \dot{x}^a,\, f^A,\, \dot{f}^A\,\right\}_{a=1,...,l;A=1,...,r} \,,
\ee
a system of local coordinates on $\T Q$ adapted to the foliation $\mathcal{F}_{K} \,=\, \mathcal{F}_{{ \undertilde{K}^C}}$.
%%%
The set of coordinates
\be
\left\{\, x^a \,\right\}_{a=1,...,l} \,,
\ee
and 
\be
\left\{\, x^a,\, \dot{x}^a \,\right\}_{a=1,...,l} \,,
\ee
represent systems of coordinates on the spaces of leaves of $\mathcal{F}_{\undertilde{K}}$ and $\mathcal{F}_K$, {which may be treated as smooth manifolds locally}.
%%%

\noindent Let us denote by
\be
\left\{\, x^a,\, \dot{x}^a,\, f^A,\, \dot{f}^A,\, {v_f}^A,\, {v_{\dot{f}}}^A \,\right\}_{a=1,...,l,A=1,...,r} \,,
\ee
a system of local coordinates on $\mathscr{T}\mathcal{F}_{{ \undertilde{K}^C}} \,=\, \mathscr{T} \mathcal{F}_K$, and by
\be
\left\{\, x^a,\, f^A,\, \dot{f}^A,\, \dot{x}^a,\, {v_f}^A,\, {v_{\dot{f}}}^A \,\right\}_{a=1,...,l,A=1,...,r} \,,
\ee
a system of local coordinates on $\mathbf{T}\mathscr{T}\mathcal{F}_{\undertilde{K}}$.
%%%
As for the bundles $\pi_{\mathbf{T}Q}$ and $T \pi_{Q}$, an isomorphism between the bundles $\mathscr{T}\mathcal{F}_{{ \undertilde{K}^C}}$ and $\mathbf{T}\mathscr{T}\mathcal{F}_{\undertilde{K}}$ exists. Intrinsically, this isomorphism is exactly the restriction of the canonical involution $\delta : \mathbf{TT}Q \to \mathbf{TT}Q$ to the subbundle $\mathscr{T}\mathcal{F}_{\mathbf{T} \undertilde{K}}$. 
%%%
Indeed, since $\mathscr{T}\mathcal{F}_{\undertilde{K}}$ is a smooth submanifold of $\mathbf{T}Q$ (being the total space of the distribution $\undertilde{K}$), its tangent bundle $\mathbf{T}\mathscr{T}\mathcal{F}_{\undertilde{K}}$ embeds naturally into $\mathbf{TT}Q$. 
%%%
At the same time, $\mathscr{T}\mathcal{F}_{{ \undertilde{K}^C}}$ is naturally a subbundle of $\mathbf{TT}Q$. 
%%%
It is straightforward to show that the canonical involution $\delta$ maps this subbundle exactly onto $\mathbf{T}\mathscr{T}\mathcal{F}_{\undertilde{K}}$. 
%%%
We can therefore define:
\be
\delta_{\mathcal{F}} := \delta \big|_{\mathscr{T}\mathcal{F}_{{ \undertilde{K}^C}}} \;\;:\;\; \mathscr{T}\mathcal{F}_{{\undertilde{K}^C}} \to \mathbf{T}\mathscr{T}\mathcal{F}_{\undertilde{K}} \,,
\ee
which locally reads:
\be
\delta_{\mathcal{F}} (x^a,\, \dot{x}^a,\, f^A,\, \dot{f}^A,\, {v_f}^A,\, {v_{\dot{f}}}^A) \mapsto (x^a,\, f^A,\, \dot{f}^A \,=\, {v_f}^A,\, \dot{x}^a,\, {v_f}^A \,=\, \dot{f}^A,\, {v_{\dot{f}}}^A) \,.
\ee

\noindent Similarly to what happens for the iterated bundles considered by Tulczyjew, the bundle $\mathscr{T}^*\mathcal{F}_{{ \undertilde{K}^C}}$, where we chose the system of local coordinates
\be
\left\{\, x^a,\, \dot{x}^a,\, f^A,\, \dot{f}^A,\, {\mu_f}_A,\, {\mu_{\dot{f}}}_A \,\right\}_{a=1,...,l,A=1,...,r} \,,
\ee
is the dual bundle to $\mathscr{T}\mathcal{F}_{{ \undertilde{K}^C}}$ with respect to the pairing
\be
\langle \, \rho,\, \xi\,\rangle \,=\, {\mu_f}_A {v_f}^A + {\mu_{\dot{f}}}_A {v_{\dot{f}}}^A \,,
\ee
where $\rho$ is an element of $\mathscr{T}^* \mathcal{F}_{{ \undertilde{K}^C}}$ and $\xi$ is an element of $\mathscr{T}\mathcal{F}_{{ \undertilde{K}^C}}$.
%%%

\noindent Similarly, as for the bundle $\mathbf{TT}^* Q$, the bundle $\mathbf{T}\mathscr{T}^* \mathcal{F}_{\undertilde{K}}$ is the dual vector bundle to $\mathbf{T}\mathscr{T}\mathcal{F}_{\undertilde{K}}$.
%%%
The duality pairing is given by the tangent lift of the canonical pairing \cref{Eq: pairing foliations} between $\mathscr{T}^*\mathcal{F}_{\undertilde{K}}$ and $\mathscr{T}\mathcal{F}_{\undertilde{K}}$.
%%%
Explicitly, this means that for any curve $\gamma(t)$ in $\mathscr{T}\mathcal{F}_{\undertilde{K}}$ and any curve $\lambda(t)$ in $\mathscr{T}^*\mathcal{F}_{\undertilde{K}}$ projecting to the same base curve on $Q$, the pairing of their tangent vectors is the time derivative of their contraction:
\be
\frac{\dd}{\dd t} \langle \lambda(t), \gamma(t) \rangle \,=\, \frac{\dd}{\dd t} \left( \mu_A(t) f^A(t) \right) \,=\, \dot{\mu}_A f^A  + \mu_A \dot{f}^A \,,
\ee
%%%
yielding the pairing
\be
\langle\, \eta,\, \psi \, \rangle' \,=\, \dot{\mu}_A f^A + \mu_A \dot{f}^A \,,
\ee
where $\eta$ is an element of $\mathbf{T}\mathscr{T}^* \mathcal{F}_{\undertilde{K}}$ (with coordinates $(x^a, f^A, \mu_A, \dot{x}^a, \dot{f}^A, \dot{\mu}_A)$) and $\psi$ is an element of $\mathbf{T}\mathscr{T}\mathcal{F}_{\undertilde{K}}$.

\noindent The transpose map of $\delta_{\mathcal{F}}$ with respect to the pairing $\langle\,\cdot\,,\,\cdot\,\rangle'$ is an isomorphism between $\mathbf{T}\mathscr{T}^* \mathcal{F}_{\undertilde{K}}$ and $\mathscr{T}^* \mathcal{F}_{{ \undertilde{K}^C}}$.
%%%
It reads
\be \label{Eq: Tulczyjew foliations}
\begin{split}
\alpha \,=\, {\delta_\mathcal{F}}^T \;\;:\;\; &\mathbf{T}\mathscr{T}^* \mathcal{F}_{\undertilde{K}} \to \mathscr{T}^*\mathcal{F}_{{ \undertilde{K}^C}}  \\
:\;\; &(x^a,\, f^A,\, \mu_A,\, \dot{x}^a,\, \dot{f}^A,\, \dot{\mu}_A) \mapsto (x^a,\, \dot{x}^a,\, f^A,\, \dot{f}^A,\, {\mu_f}_A \,=\, \dot{\mu}_{A},\, {\mu_{\dot{f}}}_A \,=\, \mu_A) \,.
\end{split}
\ee
%%%

\subsection{Tangent structures}
\label{Subsec: Tangent structures}

In this section we introduce tangent structures, which correspond to the geometric structures generalizing the local picture of $\T Q$, for some configuration manifold $Q$ (see \cite{dLR1989,yanoishihara} for further details). First, let us define some elementary operations on the tangent bundle $\T Q$ of a configuration manifold $Q$ and study its geometry.

\begin{definition}[\textsc{Lifts of vector fields}]
\label{Def: lifts vector fields}
Let $X \in \mathfrak{X}(Q)$ be a vector field on $Q$.
\begin{itemize}
    \item The \textbf{vertical lift} of $X$, denoted by $X^V \in \mathfrak{X}(\T Q)$, is the unique vector field on $\T Q$ such that for any 1-form $\alpha$ on $Q$, $X^V(i_\alpha) = \alpha(X) \circ \tau$, where $i_\alpha$ is the fiber-wise linear function on $\T Q$ induced by $\alpha$ (locally $v^i \alpha_i$).
    %%%
    In local coordinates $(q^i, v^i)$, if $X = X^i(q) \frac{\partial}{\partial q^i}$, then:
    \be
    \label{eq:local_expression_verticallift}
    X^V = X^i(q) \frac{\partial}{\partial v^i} \,.
    \ee
    %%%
    
    \item The \textbf{complete lift} (or \textbf{tangent lift}) of $X$, denoted by $X^C \in \mathfrak{X}(\T Q)$, is the vector field on $\T Q$ whose flow is the tangent lift of the flow of $X$.
%%%
That is, if $\phi_t$ is the flow of $X$, then $\Phi_t = T \phi_t$ is the flow of $X^C$.
    %%%
    In local coordinates, it reads:
    \be
    X^C = X^i(q) \frac{\partial}{\partial q^i} + v^k \frac{\partial X^i}{\partial q^k} \frac{\partial}{\partial v^i} \,.
    \ee
\end{itemize}
%%%

\noindent The mapping $X \mapsto X^C$ is a Lie algebra homomorphism from $\mathfrak{X}(Q)$ to $\mathfrak{X}(\T Q)$, while the vertical lift is commutative.
%%%
Specifically, for any $X, Y \in \mathfrak{X}(Q)$, the following bracket relations hold:
\begin{align}
    [X^V, Y^V] &= 0 \,, \label{Eq: VV bracket} \\
    [X^C, Y^V] &= [X, Y]^V \,, \label{Eq: CV bracket} \\
    [X^C, Y^C] &= [X, Y]^C \,. \label{Eq: CC bracket}
\end{align}
\end{definition}

\begin{remark}
\label{remark:complete_generates_tangent}
Notice that complete lifts of vector fields $X^{C}$ generate $\T Q$ point wise, for every $v \in \T Q$. As a consequence, we may use the complete lift to compute the lift of tensors of different degree.
\end{remark}

\noindent A first example of the idea presented in \cref{remark:complete_generates_tangent} is the lift of almost-product structures to the tangent bundle as well. The study of these lifts will result useful in the sequel. {For the sake of exposition, we restrict to the case of almost product structures complementing an integrable distribution.}
%%% Their complete lift is defined below.
%%%

\begin{definition}[\textsc{Complete lift of an almost product structure}]
\label{Def: Complete Lift}
Let $\undertilde{P}$ be an almost product structure on the configuration manifold $Q$ {complementing an integrable distribution}. {Locally, with adapted coordinates,} the projector {reads as}:
\be
\undertilde{P} \,=\, \left(\dd f^A - \undertilde{P}^A_a(x, f) \dd x^a \right) \otimes \frac{\partial}{\partial f^A} \,,
\ee
where $(x^a,\,f^A)$ denote local coordinates on $Q$ adapted to the foliation induced by $\undertilde{P}$.
%%%
The complete lift of $\undertilde{P}$ to the tangent bundle $\T Q$ is defined by the condition
\be
{\undertilde{P}}^C(X^C) \,=\, \left[\undertilde{P}(X)\right]^C \,,\;\;\; \forall \;\; X \in \mathfrak{X}(Q) \,. 
\ee
%%%
In the induced local coordinates $(x^a, f^A, {v_x}^a, {v_f}^A)$ on $\T Q$, the complete lift $P$ takes the specific form:
\be \label{Eq: Complete Lift Coordinates}
P \,=\, P_f^A \otimes \frac{\partial}{\partial f^A} + P_v^A \otimes \frac{\partial}{\partial {v_f}^A} \,,
\ee
where the projection 1-forms are given by:
\begin{align}
P_f^A &= \dd f^A - \undertilde{P}^A_a \dd x^a \,, \\
P_v^A &= \dd {v_f}^A - \undertilde{P}^A_a \dd {v_x}^a - \left( {v_x}^b \frac{\partial \undertilde{P}^A_a}{\partial x^b} + {v_f}^B \frac{\partial \undertilde{P}^A_a}{\partial f^B} \right) \dd x^a \,.
\end{align}
\end{definition}

\begin{remark} If $\undertilde{P}$ complements the integrable distribution $\undertilde{K}$, its complete lift $\undertilde{P}^C$ complements the complete lift of the distribution $\undertilde{K}^C$.
\end{remark}
{}

\begin{definition}[\textsc{The geometry of the tangent bundle}]
\label{def:geometry_of_tangent}
Given a smooth $d$-dimensional differential manifold $Q$, with local coordinates $\left\{\, q^j \,\right\}_{j=1,...,d}$, the geometry of its tangent bundle $\T Q$ is characterized by two canonical objects (see \cite{godbillon,dLR1989}):
\begin{itemize}
    \item The \textbf{vertical endomorphism} (or \textbf{soldering form}) $S$, which is a $(1,1)$-tensor field $S$ on $\T Q$ that locally, using the system of coordinates $\left\{\, q^j,\, v^j \,\right\}_{j=1,...,d}$ for $\T Q$, reads 
    \be 
    \label{eq:soldering_form}
    S = \dd q^j \otimes \frac{\partial}{\partial v^j} \,. \ee 
    %%%
    It defines the vertical distribution $\mathcal{V}(\T Q) = \operatorname{Im}(S)$. Identified as a map $S \colon \T Q \longrightarrow \T Q$, it can be intrinsically defined as the unique map satisfying
    \be
    S(X(v)) := ((\pi_ Q)_\ast X(v))^V\,,
    \ee
    for every $X \in \mathfrak{X}(\T Q)$.
    \item The \textbf{Liouville vector field} $\Delta$, which is the infinitesimal generator of dilations along the fibers. 
%%%    
Locally, 
\be
\Delta = v^j \frac{\partial}{\partial v^j} \,.
\ee
\end{itemize}
%%%
These tensors satisfy the following properties:
\begin{eqnarray} \label{Eq: cond tangent}
\mathrm{Im} S \,&=&\, \mathrm{ker} S \,, \label{Eq: cond tangent1} \\
N_S \,&=&\, 0 \,, \label{Eq: cond tangent2}\\
\Delta &\in& \mathrm{Im} S \,, \label{Eq: cond tangent3}\\
\mathcal{L}_\Delta S \,&=&\, -S \,,\label{Eq: cond tangent4}
\end{eqnarray}
where $N_S \,=\, [S,S]_{F-N}$ (where $[\,\cdot\,,\,\cdot\,]_{F-N}$ denotes the Frolicher-Nijenhuis brackets \cite{kobayashi}) is the Nijenhuis tensor of $S$.
%%%
\end{definition}
%%%

\noindent Properties \eqref{Eq: cond tangent1}-\eqref{Eq: cond tangent4} are not incidental; they uniquely characterize the tangent bundle structure (see \cref{thm:global_tangent_manifold}).

\noindent With the above discussion in mind, the following definition is natural.
%%%

\begin{definition}[\textsc{(Almost) tangent structure}] An {\bf almost tangent structure} on a manifold $M$ is a $(1,1)$-tensor field $S$ such that, when identified as an endomorphism $S \colon \T M \longrightarrow \T M$, it satisfies $\Im S = \ker S$. 
%%%
An almost tangent structure $S$ on $M$ is called a {\bf tangent structure} or {\bf involutive} if the endomorphism $S$ satifies $N_S = [S,S]_{F-N}  = 0$.
\end{definition}

\begin{remark} 
\label{remark:Tangent_bundle_structure}
Let $Q$ be an arbitrary configuration manifold. In light of \cref{def:geometry_of_tangent}, we have that $M = \T Q$ admits a canonical tangent structure.
\end{remark}

\noindent In fact, any almost tangent structure on a manifold $M$, say $S \in \Gamma \left(\T^\ast M \otimes \T M \right)$ has the local expression of Eq. \eqref{eq:soldering_form} if and only if $[S, S]_{F-N} = 0$ (see {\cite{kobayashi}).
%%%
Whether a tangent structure $S \in \Gamma \left(\T^\ast M \otimes \T M \right)$ is isomorphic to the canonical tangent structure on $\T Q$ is characterized by the existence of a \emph{Liouville vector field} satisfying properties \cref{Eq: cond tangent3}-\eqref{Eq: cond tangent4}. 
%%% 

\noindent Indeed, we have:

\begin{theo}[\textsc{Characterization of tangent bundles}]\cite{nagano,thompson,DeFilippo-Landi-Marmo-Vilasi-TangentBundle-1989}
\label{thm:global_tangent_manifold}
Given a $2d$-dimensional manifold $M$, equipped with a $(1,1)$-tensor $S$ and a vector field $\Delta$ such that:
\begin{itemize}
    \item The vector field $\Delta$ is complete.
    \item The set of zeroes of $\Delta$, $Q := \{ m \in M \mid \Delta_m = 0 \}$, is a smooth $d$-dimensional embedded submanifold of $M$.
    \item The limit of the flow of $\Delta$, $\lim_{t \to -\infty} F_t^\Delta(m)$, exists for all $m \in M$ (and defines the projection onto $Q$).
    \item $S$ and $\Delta$ satisfy the relations in \cref{Eq: cond tangent1,Eq: cond tangent2,Eq: cond tangent3,Eq: cond tangent4}.
\end{itemize}
Then the manifold $M$ is diffeomorphic to the tangent bundle of $Q$, $M \cong \T Q$.
\end{theo}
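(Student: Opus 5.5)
The strategy is to use the flow of $\Delta$ to build a fiber-preserving map $M \to \T Q$ and then show it is a diffeomorphism. By the limit hypothesis we may set $\tau(m) := \lim_{t\to-\infty} F^\Delta_t(m) \in Q$. The first task is to show that $\tau$ is a smooth surjective submersion onto $Q$ which restricts to the identity on $Q$.

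To get there we linearize $\Delta$ at its zero set. At $q \in Q$ the linearization $A_q := (\nabla \Delta)_q$ is a well-defined endomorphism of $\T_q M$, since $\Delta_q=0$. Conditions \eqref{Eq: cond tangent3}--\eqref{Eq: cond tangent4} together with $\Im S = \ker S$ force $\T_q Q = \ker A_q$. Moreover $A_q$ acts as the identity on the complement $\Im S_q$: in the local normal form \eqref{eq:soldering_form}, provided by $N_S=0$, the relation $\mathcal{L}_\Delta S = -S$ gives $\Delta = v^j\partial_{v^j} + (\text{terms vanishing on } Q)$. Hence $Q$ is a normally hyperbolic, indeed normally repelling, submanifold with eigenvalue $1$ in the normal directions. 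The standard stable-manifold and linearization argument (Sternberg-type, here simplified because all the normal eigenvalues are equal) then shows that $\tau$ is smooth, and that near $Q$ the flow is smoothly conjugate to fiberwise dilation on the normal bundle $\nu Q \cong \Im S|_Q$.

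Next we define the candidate diffeomorphism
\[
\Phi\colon M\to \T Q,\qquad \Phi(m) := \lim_{t\to-\infty} e^{-t}\,\bigl(S\,\T_m F^\Delta_t\,\Delta_m\bigr)\ \text{read in } \T_{\tau(m)}Q,
\]
equivalently the derivative at $s=0$ of the curve $s\mapsto F^\Delta_{\log s}(m)$, which lands in the normal space $\Im S_{\tau(m)}$. We identify $\Im S_q \cong \T_q M/\T_q Q$, and then with $\T_q Q$ via $S$, because $S$ induces an isomorphism $\T_qM/\ker S_q \to \Im S_q$ and $\T_q Q$ is complementary to $\ker S_q$. Completeness of $\Delta$ and the conjugation to dilations imply $\Phi\circ F^\Delta_t = e^t\cdot \Phi$. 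Combined with the local linearization near $Q$, this equivariance makes $\Phi$ a local diffeomorphism everywhere, since every point is carried into the linearized neighbourhood by some $F^\Delta_t$.

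Injectivity and surjectivity of $\Phi$ follow fiberwise. Each fiber $\tau^{-1}(q)$ is the unstable set of $q$, and it is diffeomorphic through the equivariant map to the full vector space $\T_qQ$: flowing the linearized ball outward covers everything, by completeness. The main obstacle is the smoothness of $\tau$ and of the linearization near $Q$ (the normal-hyperbolicity step). I would first try to avoid the general theory by using the coordinates from $N_S=0$. In these coordinates $\mathcal{L}_\Delta S=-S$ pins $\Delta$ down to $v^j\partial_{v^j}$ plus a vertical correction that is homogeneous of degree one along the fibers, and that correction can be removed by an explicit change of fiber coordinates.
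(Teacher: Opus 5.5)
The paper gives no proof of this theorem; it is quoted from Nagano, Thompson and De Filippo--Landi--Marmo--Vilasi. Your outline follows the standard argument for this result: build $\Phi\colon M\to\T Q$ from the flow, linearize near $Q$, propagate with $\Phi\circ F^\Delta_t=e^t\,\Phi$, and get fiberwise bijectivity from completeness and the limit hypothesis. It works, but two points need repair.

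\textbf{The displayed definition of $\Phi$ is identically zero.} Since $\T_mF^\Delta_t(\Delta_m)=\Delta_{F^\Delta_t(m)}\in\Im S=\ker S$, the expression $S\,\T_mF^\Delta_t\,\Delta_m$ vanishes. What you mean is
$\Phi(m)=(S|_{\T_{\tau(m)}Q})^{-1}\bigl(\lim_{t\to-\infty}e^{-t}\Delta_{F^\Delta_t(m)}\bigr)$.
In words: take the velocity at $s=0$ of $s\mapsto F^\Delta_{\log s}(m)$, which lies in $\Im S_{\tau(m)}$, and then \emph{invert} the isomorphism $S\colon\T_qQ\to\Im S_q$. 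Applying $S$ instead kills the vector. Your ``equivalently'' sentence and your identification paragraph already say the right thing.

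\textbf{The step you call the main obstacle is not carried out, and your description of it is inaccurate.} You do not need normal hyperbolicity or a Sternberg-type linearization along a manifold of zeros; the hypotheses give the normal form directly. Take coordinates with $S=\dd q^i\otimes\frac{\partial}{\partial v^i}$, which exist by $N_S=0$, on a chart whose $v$-slices are connected. Then $\Delta\in\Im S$ gives $\Delta=\Delta^j\frac{\partial}{\partial v^j}$, and
\begin{equation*}
(\mathcal{L}_\Delta S)\Bigl(\frac{\partial}{\partial q^i}\Bigr)=\Bigl[\Delta,\frac{\partial}{\partial v^i}\Bigr]-S\Bigl[\Delta,\frac{\partial}{\partial q^i}\Bigr]=-\frac{\partial \Delta^j}{\partial v^i}\,\frac{\partial}{\partial v^j}\,.
\end{equation*}
So $\mathcal{L}_\Delta S=-S$ forces $\Delta^j=v^j+c^j(q)$.

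The correction is therefore constant along the fibers, not homogeneous of degree one; a degree-one correction would violate $\mathcal{L}_\Delta S=-S$. The translation $\tilde v^j=v^j+c^j(q)$ keeps $S=\dd q^i\otimes\frac{\partial}{\partial \tilde v^i}$ and makes $\Delta=\tilde v^j\frac{\partial}{\partial\tilde v^j}$ exactly. Hence $Q=\{\tilde v=0\}$ locally, and on a product chart $U\times B_\epsilon$ the flow is $(q,\tilde v)\mapsto(q,e^t\tilde v)$ for $t\le 0$.

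On such a chart, $\tau(q,\tilde v)=(q,0)$ and $\Phi(q,\tilde v)=\tilde v^i\frac{\partial}{\partial q^i}\big|_{(q,0)}$. This gives, near each point of $Q$, smoothness of $\tau$ and $\Phi$, local invertibility, and injectivity. Your equivariance and fiberwise arguments then go through unchanged.

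You also get more than a diffeomorphism. Since $(F^\Delta_t)^\ast S=e^{-t}S$ and the canonical tensor on $\T Q$ scales the same way under fiber dilations, the two agree near $Q$ and hence everywhere. So $\Phi$ carries $(S,\Delta)$ to the canonical tangent structure of $\T Q$.
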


 \subsection{Jet structures}
\label{subsection:jet_structures}
In order to deal with the regularization of time-dependent (or non-autonomous) singular Lagrangian systems, the geometry of the so-called \textit{jet bundle} of a fiber bundle over the real line $\mathbf{Q}\longrightarrow \mathbb{R}$ will take a primary role. The first jet of the manifold is defined similarly to the tangent bundle:
%%%
\begin{definition} Let $\pi\colon \mathbf{Q} \longrightarrow\mathbb{R}$ be a fiber bundle. As a set, its \textbf{first jet bundle} $J^1\pi$ is defined as the equivalence class of sections $\gamma \colon \mathbb{R}\longrightarrow \mathbf{Q}$ to first order. It can be naturally endowed with a smooth structure (see \cite{Saunders}).
\end{definition}
%%%
\begin{remark}[\textsc{Natural coordinates}] There are natural coordinates on $J^1 \pi$, for any given fibered coordinates $(t, q^i)$ on $\pi \colon \mathbf{Q} \longrightarrow \mathbb{R}$, which read as $( q^i, \dot{q}^i, t)$, representing the class of sections through $(q^i, t)$ with velocity $\pdv{t} + \dot{q}^i \pdv{q^i}$. 
\end{remark}
%%%
On $J^1 \pi$, the notion of vertical and complete lifts may be defined similarly to the case of $\T Q$, and take the same local expression as in \cref{Def: lifts vector fields} when the canonical coordinates $(q^j, \dot{q}^i, t)$ are chosen. In particular, one may define it using the following embedding:
\begin{remark}[\textsc{Embedding of jets into tangent bundle}] For a jet bundle $J^1 \pi$, of some (arbitrary) configuration bundle $\pi \colon \mathbf{Q} \longrightarrow \mathbb{R}$, we have a canonical embedding 
\[
i_{\pi} \colon J^1 \pi \hookrightarrow \T \mathbf{Q}\,.
\]
This embedding is defined using the global vector field $\pdv{t}$ on $\mathbb{R}$ as follows:
\[
i_{\pi}(j_t \gamma) := \gamma_\ast \left( \pdv{t}\right)\,.
\]
Defining natural coordinates $(t, q^i, \dot{t}, \dot{q}^i)$ on $ \T \mathbf{Q}$, it reads as $i_{\pi} ^\ast(t, q^i, \dot{t}, \dot{q}^i) = (t, q^i, 1, \dot{q}^i)$.
\end{remark}

Then, given a vertical vector field $X \in \mathfrak{X}(\mathbf{Q})$ with respect to the projection $\pi \colon \mathbf{Q} \longrightarrow \mathbb{R}$, we can define the vertical and complete lift simply by restricting the vertical and complete lift on $\T \mathbf{Q}$ to $J^1 \pi$ (which are tangent vectors). {On jet bundles one usually defines the so-called 1st order jet prolongation of a vector field on $\mathbf{Q}$. It can be defined for vertical vector fields like those we are considering here (and, indeed, in that case it is the same as what we are here calling complete lift), and, more generally, for projectable vector fields. 
%Maybe we can stress this in order to avoid comments from the referee.
}

\begin{remark}[\textsc{Local expressions of vertical and complete lifts}]
{Let $X = X^i(q, t) \pdv{q^i}$ be a vertical vector field on $\pi \colon \mathbf{Q} \longrightarrow \mathbb{R}$. Then the vertical and complete lift take the following expressions:
\[
X^v = X^i \pdv{\dot{q}^i} \qquad \text{and} \qquad X^{C} = X^i \pdv{q^i} + \left(\pdv{X^i}{t} + \pdv{X^i}{q^j} \dot{q}^j \right) \pdv{\dot{q}^i}\,.
\]}
\end{remark}

%%%
\begin{definition}[\textsc{The geometry of the jet bundle}]
\label{def:geometry_jet_bundle}
Let $\pi\colon\mathbf{Q} \longrightarrow \mathbb{R}$ denote a fiber bundle over $\mathbb{R}$, where the standard fiber has dimension $d$. The geometry of the first jet bundle is characterized by the following ingredients: 

\begin{itemize}
    \item A closed $1$-form, $\tau = \dd t$.
    \item The {\bf vertical endomorphism} ${S}$, which is a $(1,1)$-tensor field on $\T Q  \times \mathbb{R}$ that, employing the canonical set of coordinates $\{q^j, \dot{q}^j, t\}_{j = 1, \dots, d}$, reads as
    \be
    S = (\dd q^i - \dot{q}^i \dd t) \otimes \pdv{\dot{q}^i} 
    \ee
    and satisfies that $\Im S$ is an integrable distribution with
    \be
    {S}^2 =0 \,, \qquad  \rank  S = d\,, \qquad \dd t(\Im S) = 0\,, \qquad \text{and} \qquad [{S}, {S}]_{F-N} = 2\dd t \wedge S\,.
    \ee
    \item It is an affine bundle over $\mathbf{Q}$ modeled on the vector bundle $\ker \dd \pi \longrightarrow \mathbf{Q}$.
\end{itemize}
\end{definition}
%%%

\begin{remark}[\textsc{Trivialized bundles}] When the bundle is trivialized $\mathbf{Q} = Q \times\mathbb{R} \longrightarrow \mathbb{R}$, we have a canonical diffeomorphism $J^1 \pi\cong \T Q \times\mathbb{R}$. Since every fiber bundle over $\mathbb{R}$ is trivializable, it follows that $J^1 \pi \cong \T Q \times \mathbb{R}$, for some $Q$ (which is the standard fiber). However, this diffeomorphism depends on the trivilization, and breaks the jet geometry. The discussion that we present applies to the trivialized version as well, and has the advantage of being readibily generalizible to more general variational problems, where the bundle may not be trivial.
\end{remark}
%%%
 When dealing with trivialized bundles, the geometry of $J^1 \pi$ is that of \emph{stable tangent structures} (see \cite{dLEO}). In this case, the vertical endomorphism canonically splits as 
 \[
 S = \overline{S} - \dd t \otimes \Delta\,,
 \]
 where $\Delta$ is the Liouville vector field inherited by the vector bundle structure on $\T Q$. In this case, one may introduce the following $(1,1)$-tensor
 \[
 \widetilde S= \overline{S} + \dd t \otimes \pdv{t}\,.
 \]
 These objects (and their relations) completely characterize stable tangent structures. Similarly to almost tangent structures, one can define a almost stable tangent structure on a manifold $M$ to be a collection of objects that satisfy point wise the same properties that those canonical objects on $\T Q \times \mathbb{R}$ satisfy. For completeness, we collect the definitions and results here.

 \begin{definition}[\textsc{(Almost) stable tangent structure}] An {\bf almost stable tangent structure} on a $(2 d +1)$-dimensional manifold $M$ is a tuple $(\widetilde{S}, \tau, \xi)$ consisting of a $(1,1)$-tensor $\widetilde{S} \in \Gamma \left( \T^\ast M \otimes \T M\right)$, a $1$-form $\tau \in \Omega^1(M)$ and a vector field $\xi \in \mathfrak{X}(M)$ satisfying 
\be 
\widetilde{S}^2 = \tau \otimes \xi \,, \qquad \rank \widetilde{S} = d+1\,, \qquad \text{and} \qquad \tau(\xi) = 1\,.
\ee
If, in addition, $\dd \tau = 0$ and $[\widetilde{S}, \widetilde{S}]_{F-N} = 0$, we say that $(\widetilde{S}, \tau,\xi)$ is {\bf involutive} or that it defines a {\bf stable tangent structure}.
\end{definition}

Locally, every stable tangent structure looks like $\T Q \times \mathbb{R}$. To obtain a global isomorphism we need the linear structure, which is characterized by the existence of a Liouville vector field (as in the tangent case).

 \begin{theo}[\textsc{Characterization of stable tangent bundles}]\cite{dLEO}
\label{thm:global_stable_tangent_manifold}
Let $M$ be a $(2d +1)$-dimensional manifold equipped with a stable tangent structure $(\overline{S}, \tau, \xi)$ and a vector field $\Delta \in \mathfrak{X}(M)$ satisfying:
\begin{itemize}
    \item The vector field $\Delta$ is complete.
    \item The closed form $\tau $ is exact $\tau = \dd t$, for certain surjective function $t \colon M \longrightarrow \mathbb{R}$.
    \item The following set $Q := \{m \in M: \Delta|_m = 0 \quad \text{and} \quad t(m) = 0\}$ is a smooth $d$-dimensional embedded submanifold.
    \item Denoting by $F_t^\Delta$ the flow of $M$, the limit $\lim_{t \to - \infty} F^\Delta_t(m)$ exists for all $m \in M$ and is a {surjective submersion onto $Q$}.
    \item $\Delta$ satisfies the following:
    \[
    \Lie_{\Delta} {\widetilde S} = - \overline{S} \,, \qquad \text{and} \quad {\widetilde S}(\Delta) = 0
    \]
\end{itemize}
Then, $M$ is diffeomorphic to a stable tangent bundle $\T Q \times \mathbb{R}$, for some configuration manifold $Q$.
\end{theo}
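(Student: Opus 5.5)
The strategy is to reduce the statement to \cref{thm:global_tangent_manifold} by splitting off the time direction. First, since $\tau=\dd t$ with $t$ a surjective submersion (surjective by hypothesis; a submersion because $\tau(\xi)=1$ forces $\dd t\neq 0$), we will show that $\xi$ can be used to trivialize $M\cong M_0\times\mathbb{R}$, where $M_0=t^{-1}(0)$ is a $2d$-dimensional embedded submanifold. Involutivity gives $\Lie_\xi \widetilde S=0$ and $\Lie_\xi\tau=0$. To see this, one checks that $[\widetilde S,\widetilde S]_{F-N}=0$ together with $\widetilde S^2=\tau\otimes\xi$ forces $\xi$ to be an infinitesimal automorphism; the first thing I would try is to evaluate the Nijenhuis tensor on pairs $(\xi,X)$ and use $\widetilde S\xi=\xi$. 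The flow of $\xi$ preserves the level sets' structure and maps $M_0$ to $t^{-1}(s)$. Completeness of $\xi$ is not assumed, and this is a weak point; I would instead use $\Delta$-invariance together with the limit hypothesis to control the geometry.

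Next, on $M_0$ we define $S_0:=\widetilde S-\tau\otimes\xi$ restricted and projected along $\xi$; equivalently $\overline{S}$ restricted to $\ker\tau$. We will verify that $S_0^2=0$, that $\operatorname{rank}S_0=d$ (since $\operatorname{rank}\widetilde S=d+1$ with $\xi$ accounting for one dimension), and hence $\Im S_0=\ker S_0$. We will also check that $N_{S_0}=0$, which is inherited from $[\widetilde S,\widetilde S]_{F-N}=0$ because $\dd\tau=0$ makes $\ker\tau$ involutive. Then we restrict $\Delta$: from $\widetilde S(\Delta)=0$ and $\Lie_\Delta\widetilde S=-\overline S$, one derives $\Delta(t)=0$. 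Indeed, applying $\tau$ gives $\tau(\Lie_\Delta\widetilde S(\xi))=0$, which yields $\xi(\Delta t)=0$, and combined with the zero set lying in $t=0$ this gives tangency to $M_0$ after normalizing. With this, $\Delta|_{M_0}$ satisfies \cref{Eq: cond tangent3,Eq: cond tangent4}: $\Delta\in\ker\widetilde S\cap\ker\tau=\ker S_0=\Im S_0$, and $\Lie_\Delta S_0=-S_0$.

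The remaining hypotheses of \cref{thm:global_tangent_manifold} on $M_0$ follow directly from the given ones. Completeness of $\Delta$ is inherited, the zero set is exactly $Q$, and the limit map is the given surjective submersion. Hence $M_0\cong\T Q$, and the diffeomorphism sends $S_0$ to the canonical soldering form. Finally we assemble $M\cong M_0\times\mathbb{R}\cong\T Q\times\mathbb{R}$ via $m\mapsto(\text{projection along }\xi,\,t(m))$.

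The main obstacle is this global splitting $M\cong M_0\times\mathbb{R}$. In particular, it requires showing that $\Delta$ is tangent to the level sets of $t$, and that the transversal flow of $\xi$ is globally defined. My fallback is to replace $\xi$ by the vector field spanning $\Im\widetilde S\cap\ker\overline S$ normalized by $\tau$, and to argue completeness through its commutation with $\Delta$, so that flow lines can be transported from a neighborhood of $Q$, where local completeness holds, out to all of $M$ using the dilation flow.
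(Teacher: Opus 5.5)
The paper only quotes this result from the literature, so your argument has to stand on its own. It has a genuine gap at exactly the step you flag: the global splitting $M\cong M_0\times\mathbb{R}$, $M_0=t^{-1}(0)$, does not follow from the listed hypotheses, and your fallback cannot supply it.

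Grant $[\Delta,\xi]=0$. This does hold: expand $\Lie_\Delta(\widetilde S^2)=\Lie_\Delta(\tau\otimes\xi)$ using $\overline S\,\widetilde S=\widetilde S\,\overline S=0$ and $\Lie_\Delta\tau=0$. Even so, the flow of $\Delta$ only sends the $\xi$-curve through $p$ to the $\xi$-curve through another point of the same fibre of the limit map. At best this reduces completeness of $\xi$ on $M$ to completeness of $\xi$ along the zero set $Z(\Delta)$, and the hypotheses say nothing about that. Integral curves of $\xi$ can escape there.

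Concretely, let $M=\mathbb{R}^3\setminus\{(q,v,t)\colon q=0,\ t\in\{1,2\}\}$ with $\widetilde S=\dd q\otimes\pdv{v}+\dd t\otimes\pdv{t}$, $\tau=\dd t$, $\xi=\pdv{t}$, $\Delta=v\pdv{v}$ and $\overline S=\dd q\otimes\pdv{v}$. Every hypothesis holds:
\begin{itemize}
\item this is a stable tangent structure with $d=1$;
\item $\Delta$ is complete on $M$, since the removed set is a union of $v$-lines;
\item $t$ is surjective, and $Q=\{(q,0,0)\}\cong\mathbb{R}$;
\item $\lim_{s\to-\infty}F^\Delta_s(q,v,t)=(q,0,t)$ exists in $M$ and is a surjective submersion onto $Z(\Delta)$ (onto $Q$ on $M_0$);
\item $\Lie_\Delta\widetilde S=-\overline S$ and $\widetilde S(\Delta)=0$.
\end{itemize}
Yet $\pi_1(M)$ is free on two generators, so $M$ is not diffeomorphic to $\T Q'\times\mathbb{R}$ for any $1$-manifold $Q'$. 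The integral curve of $\xi$ through $(0,0,0)\in Q$ leaves $M$ at time $1$, so there is no ``local completeness near $Q$'' to propagate.

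Some extra global hypothesis is therefore indispensable; the natural one is completeness of $\xi$. With it your plan closes. Since $\xi(t)=1$, the map $(m,s)\mapsto F^\xi_s(m)$ is a diffeomorphism $M_0\times\mathbb{R}\to M$ with inverse $p\mapsto(F^\xi_{-t(p)}(p),t(p))$. Your level-set argument is sound: $\overline S$ maps into $\ker\tau$; it restricts to a tangent structure on $M_0$ because $\ker\tau$ is involutive and $N_{\overline S}=N_{\widetilde S}$ on sections of $\ker\tau$; $\Lie_\Delta\overline S=-\overline S$; and $\Delta\in\ker\overline S\cap\ker\tau$. So you can then apply \cref{thm:global_tangent_manifold} to $M_0$.

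Two smaller repairs are needed. First, $\Delta(t)=0$ is immediate: $0=\widetilde S(\widetilde S\Delta)=\tau(\Delta)\,\xi$. Your route through $\xi(\Delta t)=0$ and ``the zero set lying in $t=0$'' is not valid, because $Z(\Delta)$ meets every level of $t$. This identity also shows that the limit hypothesis cannot hold as literally written. The limit point of $F^\Delta_s(m)$ has the same $t$-value as $m$, so it lies in $Q\subset t^{-1}(0)$ only when $t(m)=0$. The hypothesis must therefore be read on $M_0$, as you implicitly do, or with target $Z(\Delta)$; the example above satisfies either reading.

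Second, evaluating the Nijenhuis tensor on $(\xi,X)$ yields only $(\widetilde S-c)\circ\Lie_\xi\widetilde S=0$, where $\widetilde S\xi=c\,\xi$ with $c=\pm1$. The value $c=1$ is a normalization, not a consequence of the axioms. This gives $\Lie_\xi\widetilde S=\beta\otimes\xi$ for some $1$-form $\beta$. To conclude $\beta=0$ you must also use $\Lie_\xi(\widetilde S^2)=\Lie_\xi(\tau\otimes\xi)=0$ together with $\tau\circ\widetilde S=c\,\tau$. In any case, $\Lie_\xi\widetilde S=0$ is not needed for a statement that only asserts a diffeomorphism.
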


It is unknown to the authors if a similar characterization for \emph{jet structures} on a manifold $M$ exists. However, to deal with uniqueness of the Lagrangian regularization of non-autonomous systems, we need to introduce this notion abstractly. In principle, there may be a regularization which could be considered Lagrangian locally (due to the presence of a vertical endomorphism and $1$-form $\dd t$), but not considered Lagrangian globally, in the sense that it is not diffeomorphic to a jet bundle itself. The notion that we work with is the following:

\begin{definition}[\textsc{Almost jet structure}] Let $M$ be a $(2d + 1)$-dimensional manifold. An {\bf almost jet structure} on $M$ is a pair $(S, \xi)$, where $S$ is a $(1,1)$-tensor and $\xi$ is a nowhere zero $1$-form, that satisfy the following properties. 
\[
S^2 = 0\,, \qquad \operatorname{rank} S = d\,, \qquad \xi(\Im S) = 0\,.
\]
When the equalities $\dd \xi = 0$, $[S, S]_{F-N} = 2 \xi \wedge S$ hold and $\Im S$ is an integrable distribution, we call it a {\bf jet structure}.
\end{definition}

\section{Regularization of autonomous systems}
\label{Sec:Regularization of autonomous systems}

\subsection{Symplectic Hamiltonian systems}

{
\begin{definition}[\textsc{Symplectic manifold}]
\label{Def: Symplectic manifold}
A \textbf{symplectic manifold} is a pair $(M, \omega)$, where $M$ is a smooth manifold of even dimension $2n$ and $\omega \in \Omega^2(M)$ is a closed and non-degenerate differential 2-form, called the \textbf{symplectic form}.
\end{definition}
\begin{theo}[\textsc{Darboux's Theorem}]
\label{Thm: Darboux symplectic}
Let $(M, \omega)$ be a symplectic manifold of dimension $2n$. Around every point $m \in M$, there exists local coordinates $(q^1, \dots, q^n, p_1, \dots, p_n)$, called \textbf{Darboux coordinates}, such that the symplectic form locally reads:
$$
\omega = \dd q^i \wedge \dd p_i \,.
$$
\end{theo}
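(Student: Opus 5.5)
We prove Darboux's theorem by Moser's path method, which reduces the problem to the constant-coefficient case and then deforms one symplectic form into the other.

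\emph{Step 1 (linear normalization).} Fix $m \in M$. Choose any chart $\phi$ centred at $m$, so that $\phi(m)=0$. The value $\omega_m$ is a non-degenerate skew-symmetric bilinear form on $\T_m M$. By the linear symplectic Gram--Schmidt procedure there is a basis $\{e_i, f_i\}$ of $\T_m M$ with $\omega_m(e_i,f_j)=\delta_{ij}$ and $\omega_m(e_i,e_j)=\omega_m(f_i,f_j)=0$. After composing $\phi$ with a linear change of coordinates, we may assume that the constant form $\omega_0 = \dd q^i\wedge \dd p_i$ on a ball $U\subset\R^{2n}$ agrees with $\omega$ at $m$, i.e. $\omega_0|_0=\omega|_0$ in this chart.

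\emph{Step 2 (Moser interpolation).} Set $\omega_t = \omega_0 + t(\omega-\omega_0)$ for $t\in[0,1]$. Each $\omega_t$ is closed, and at the origin all the $\omega_t$ coincide with $\omega_0$, which is non-degenerate. Non-degeneracy is an open condition and $[0,1]$ is compact, so after shrinking $U$ every $\omega_t$ is non-degenerate on $U$. The difference $\omega-\omega_0$ is closed on the ball $U$. The Poincaré lemma, in its explicit radial-homotopy form, gives $\omega-\omega_0=\dd\beta$ for a $1$-form $\beta$ with $\beta|_0=0$; since the homotopy operator integrates along rays from the origin, a form vanishing at $0$ has a primitive vanishing at $0$ as well.

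\emph{Step 3 (the isotopy).} Define the time-dependent vector field $X_t$ by $i_{X_t}\omega_t = -\beta$, which is possible by non-degeneracy of $\omega_t$. Since $\beta|_0=0$, we get $X_t(0)=0$. Hence the flow $\psi_t$ fixes the origin, and by compactness of $[0,1]$ it is defined up to $t=1$ on a smaller neighbourhood of $0$. Cartan's formula gives
\begin{equation*}
\frac{\dd}{\dd t}\psi_t^*\omega_t=\psi_t^*\bigl(\Lie_{X_t}\omega_t+\omega-\omega_0\bigr)=\psi_t^*\bigl(\dd i_{X_t}\omega_t+\dd\beta\bigr)=0 .
\end{equation*}
Therefore $\psi_1^*\omega=\omega_0$, and the functions $q^i\circ\psi_1^{-1},\,p_i\circ\psi_1^{-1}$ (composed with $\phi$) are the required Darboux coordinates around $m$.

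The main obstacle is the domain bookkeeping: the neighbourhood must be shrunk so that non-degeneracy holds uniformly in $t$ and the flow exists up to time $1$. Both are handled by openness together with compactness of $[0,1]$, and by the fact that the origin is a fixed point of every $\psi_t$.
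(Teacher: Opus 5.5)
The paper does not prove this statement. Darboux's theorem is quoted there as classical background with no argument, so there is no route of the authors' to compare yours against.

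Judged on its own, your Moser-path proof is correct and is the standard one. The following steps all check out:
\begin{itemize}
\item the linear normalization at $m$;
\item the uniform non-degeneracy of $\omega_t=\omega_0+t(\omega-\omega_0)$ on a smaller ball, from openness, compactness of $[0,1]$, and the fact that every $\omega_t$ agrees with $\omega_0$ at the origin;
\item the definition of $X_t$ by $i_{X_t}\omega_t=-\beta$;
\item existence of the flow up to $t=1$ near the fixed point $0$;
\item the computation $\frac{\dd}{\dd t}\psi_t^*\omega_t=\psi_t^*(-\dd\beta+\dd\beta)=0$;
\item the final chart $\psi_1^{-1}\circ\phi$, which satisfies $(\phi^{-1}\circ\psi_1)^*\omega=\psi_1^*\bigl((\phi^{-1})^*\omega\bigr)=\omega_0$.
\end{itemize}

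One justification should be corrected, although the claim it supports is true. The radial homotopy operator gives
$\beta_x=\int_0^1 t\,(\omega-\omega_0)_{tx}(x,\cdot)\,\dd t$.
This vanishes at $x=0$ for \emph{any} closed $2$-form, because it contracts with the position vector $x$. So it is not the vanishing of $\omega-\omega_0$ at the origin that forces $\beta|_0=0$. Alternatively, you could take any primitive and subtract the differential of the linear function $x\mapsto\beta_0(x)$. The fact that $\omega-\omega_0$ vanishes at $0$ is used only to keep the whole path $\omega_t$ non-degenerate near the origin.

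Compared with the classical inductive proof (a pair of functions with Poisson bracket $1$, then induction on dimension inside the symplectic orthogonal of their Hamiltonian vector fields), your argument is shorter. It is also the same Moser-type mechanism that underlies the neighbourhood-uniqueness results the paper relies on later, such as Gotay's coisotropic embedding theorem.
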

}

\begin{definition}[\textsc{Hamiltonian system}]
\label{Def: Hamiltonian system}
A \textbf{Hamiltonian system} is a triple $(M, \omega, H)$, where $(M, \omega)$ is a symplectic manifold (the \textit{phase space}) and $H \in C^\infty(M)$ is a smooth function (the \textit{Hamiltonian}).
\end{definition}

\begin{remark}[\textsc{Poisson manifolds}]
\label{Rem: Poisson manifold}\cite{dLR1989,libermann,narciso}
Some authors define a Hamiltonian system more generally as a pair $(M, \Lambda)$, where $M$ is a manifold and $\Lambda$ is a Poisson tensor (a bivector field whose Schouten-Nijenhuis bracket $[\Lambda, \Lambda]$ vanishes), together with a Hamiltonian function $H \in C^\infty(M)$.
%%%
Every symplectic manifold $(M, \omega)$ is a Poisson manifold, with the Poisson tensor $\Lambda$ being the bivector field $\omega^\sharp$ associated with $\omega$. The resulting Poisson bracket is given by $\{f, g\} = \Lambda(\dd f, \dd g) = \omega(X_f, X_g)$.
%%%
The converse, however, is not true, as a Poisson tensor may be degenerate (i.e., the map $\Lambda^\sharp \colon \T^\ast M \to \T M$ is not an isomorphism).
%%%
Throughout this paper, we will adhere to the definition given in \cref{Def: Hamiltonian system}, and the term Hamiltonian system will always refer to a system defined on a symplectic manifold.
\end{remark}
%%%

{
\noindent Since the $2$-form $\omega$ is non-degenerate, the musical morphism $\omega^\flat \colon \T M \to \T^\ast M$, given by $\omega^\flat(v) = i_v \, \omega$, actually defines a vector bundle isomorphism. 
%%%
This guarantees the existence of a unique vector field $X_H \in \mathfrak{X}(M)$, the \textit{Hamiltonian vector field}, satisfying the intrinsic \textit{Hamilton's equations}:
\be \label{Eq: Hamilton eq}
i_{X_H} \omega =\dd H \,.
\ee
%%%
In Darboux coordinates $(q^i, p_i)$, the vector field takes the local form:
\be
X_H \,=\, \frac{\partial H}{\partial p_i} \frac{\partial}{\partial q^i} - \frac{\partial H}{\partial q^i} \frac{\partial}{\partial p_i} \,,
\ee
and its integral curves $\gamma(t) = (q^i(t), p_i(t))$ are the solutions of the Hamiltonian system, satisfying the standard Hamilton's equations:
\be
\frac{\dd q^i}{\dd t} \,=\, \frac{\partial H}{\partial p_i} \,, \quad \frac{\dd p_i}{\dd t} \,=\, - \frac{\partial H}{\partial q^i} \,.
\ee
%%%
The non-degeneracy of $\omega$ ensures local existence and uniqueness of solutions for any given initial condition $m \in M$.}

\subsection{Coisotropic regularization of pre-symplectic Hamiltonian systems}
\label{Sec: Pre-symplectic Hamiltonian systems}

In general, when working with singular theories (such as gauge Hamiltonian theories and singular time-independent Lagrangian theories), we work on a pre-symplectic manifold, rather than on a symplectic one, say $(M, \omega)$.
%%%

\begin{definition}[\textsc{Pre-symplectic Hamiltonian system}]
\label{Def: Pre-symplectic Hamiltonian system}
A \textbf{pre-symplectic Hamiltonian system} is a triple $(M, \omega, H)$, where $(M, \omega)$ is a pre-symplectic manifold and $H \in C^\infty(M)$ is a Hamiltonian.
%%%
In this case, the characteristic distribution $\mathcal{V} := \ker \omega$ is non-trivial. 
%%%
The dynamics is still formally governed by the equation
\be \label{Eq: Pre-symplectic Hamilton eq}
i_X \omega =\dd H \,.
\ee
%%%
However, this equation poses two distinct problems:
\begin{enumerate}
    \item \textbf{Existence:} A vector field $X$ satisfying the equation may not exist.
    \item \textbf{Uniqueness:} If a solution $X$ exists, it is not unique (it is defined only up to the addition of any vector field $Y \in \mathcal{V}$).
\end{enumerate}
\end{definition}

\noindent These two problems identify two classes of pre-symplectic systems:

\paragraph{Inconsistent Hamiltonian Systems.}
A system is \textit{inconsistent} if the existence condition fails, i.e., $\dd H$ is not in the image of $\omega^\flat$. 
%%%
This happens at points $m$ where $\dd H_m$ does not annihilate the kernel: $\ker \omega \not\subseteq \ker \dd H$.
%%%
For these systems, one must first find the submanifold of $M$ where a consistent dynamical evolution exists. 
%%%
This is achieved by the \textit{pre-symplectic constraint algorithm (PCA)}, developed by \textit{M.J. Gotay}, \textit{J.M. Nester}, and \textit{G. Hinds} \cite{gotay0} (see also the papers \cite{gotay_nester1,gotay_nester2}).
%%%

\noindent The algorithm proceeds iteratively. 
%%%
We define $M_0 := M$ and define the first constraint manifold $M_1$ as the locus where $\dd H$ is compatible with $\omega$:
\be
M_1 := \{ m \in M_0 \mid (\dd H)_m(Y) = 0, \;\forall Y \in (T_m M_0)^{\perp_\omega} \} \,,
\ee
where $(T_m M_0)^{\perp_\omega} := \mathcal{V}_m = \ker \omega_m$.
%%%
Assuming $M_1$ is a smooth submanifold, the algorithm imposes solutions of \eqref{Eq: Pre-symplectic Hamilton eq} on $M_1$ to be tangent to $M_1$, which is a new consistency requirement. 
%%%
Eventually, at each step $k \ge 2$, one finds the submanifold $M_k \subset M_{k-1}$:
\be
M_k := \{ m \in M_{k-1} \mid (\dd H)_m(Y) = 0, \;\forall Y \in (T_m M_{k-1})^{\perp_\omega} \}\,.
\ee
where $(T_m M_{k-1})^{\perp_\omega} := \{ Y \in \T_m M \mid \omega_m(Y, Z) = 0, \; \forall Z \in \T_m M_{k-1} \}$.
%%%
{
Assuming that all of these subsets are actually smooth submanifolds, we have two main possibilities:
\begin{itemize}
    \item There is a certain $k$ for which $M_k = \varnothing$, in which case the dynamics are globally not well-defined.
    \item The algorithm stabilizes on a final constraint manifold $M_f \neq \varnothing$, meaning there exists $k \in \mathbb{N}$ such that $M_k = M_{k-1} =: M_f$.
\end{itemize}
%%%
If the algorithm stabilizes, then the pre-symplectic Hamiltonian system $(M_f, \omega_f, H_f)$ (where $\omega_f = \mathfrak{i}_f^\ast \omega$, $H_f = \mathfrak{i}_f^\ast H$) is, by construction, no longer inconsistent.
%%%
Equations
\be
i_{\Gamma_f} \omega_f \,=\, \dd H_f \,,
\ee
are now well-posed and the integral curves of $\Gamma_f$, embedded into the starting manifold $M$, are the solutions of the original pre-symplectic Hamiltonian system.
}

\begin{remark}
 The above (PCA) algorithm is a geometrization of the so-called Dirac-Bergmann constraint algorithm developed by both authors in an independent manner (see \cite{dirac1,dirac2,bergmann,bergmann2}). The reader can find a more complete information in \textit{P.A.M. Dirac}'s monograph \cite{dirac} as well as in these two papers by \textit{M.J. Gotay} and \textit{J.M. Nester} \cite{gotay_nester1,gotay_nester2}.
\end{remark}

\paragraph{Consistent Hamiltonian Systems.}
A system is consistent if it admits a global dynamics, i.e., $\ker \omega \subseteq \ker \dd H$. 
%%%
This corresponds to a system that either started consistent (like a pure gauge theory) or is the \textit{result} $(M_f, \omega_f, H_f)$ of applying the PCA.
%%%
In this case, the existence problem is solved, but the uniqueness problem (gauge ambiguity) remains, as the dynamics $X$ is only defined up to $X \mapsto X+Y$ for $Y \in \mathrm{ker}\,\omega_f$.
%%%

\noindent To solve this remaining ambiguity, one can regularize the system using the coisotropic embedding theorem.

\begin{theo}[\textsc{The coisotropic embedding theorem}]
\label{Thm: Coisotropic Embedding Theorem}
Let $(M,\,\omega)$ be a pre-symplectic manifold with characteristic distribution $\mathcal{V} = \ker \omega$.
%%%
There exists a symplectic manifold $(\widetilde{M},\,\widetilde{\omega})$ and an embedding
\be
\mathfrak{i} \colon  M \hookrightarrow \widetilde{M} \,,
\ee
such that $\mathfrak{i}^\ast \widetilde{\omega} = \omega$ and $\mathfrak{i}(M)$ is a closed \textbf{coisotropic submanifold} of $(\widetilde{M}, \widetilde{\omega})$.
%%%
The pair $(\widetilde{M},\,\widetilde{\omega})$ is called a \textbf{symplectic thickening} of $(M,\,\omega)$. 
%%%
Furthermore, this thickening is unique up to a neighborhood equivalence \cite{gotay1}.
\end{theo}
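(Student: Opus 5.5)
We construct the thickening explicitly from the characteristic distribution. Since $\omega$ is closed, $\mathcal{V}=\ker\omega$ is involutive: for $X,Y\in\Gamma(\mathcal{V})$, Cartan's formula gives $i_{[X,Y]}\omega=\Lie_X i_Y\omega - i_Y\Lie_X\omega = 0$, because $\Lie_X\omega=\dd i_X\omega + i_X\dd\omega=0$. Assuming, as the statement tacitly does, that $\mathcal{V}$ has constant rank $r$, it is therefore a regular integrable distribution with foliation $\mathcal{F}$. We take $\widetilde{M}$ to be a neighbourhood of the zero section in the total space of the dual bundle $E:=\mathcal{V}^*\cong\mathscr{T}^*\mathcal{F}$, with projection $\tau\colon E\to M$, and embed $M$ as the zero section $\mathfrak{i}$; this embedding is closed.

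To define $\widetilde{\omega}$, we choose an almost product structure $P$ adapted to $\mathcal{V}$ as in \cref{Def: Almost Product Structure}, i.e.\ a complement $H=\ker P$. Using $P$ we define a tautological $1$-form $\theta$ on $E$ by
\[
\theta_{e}(w) := \langle e,\, P(\tau_\ast w)\rangle\,, \qquad e\in E_m,\; w\in \T_eE\,,
\]
and set $\widetilde{\omega} := \tau^\ast\omega + \dd\theta$. In adapted coordinates $(x^a,f^A,\mu_A)$ we have $\theta=\mu_A(\dd f^A - P^A_a\dd x^a)$, hence
\[
\dd\theta = \dd\mu_A\wedge(\dd f^A - P^A_a\dd x^a) - \mu_A\,\dd(P^A_a\dd x^a)\,.
\]

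The required properties then follow in order.
\begin{enumerate}
\item $\widetilde{\omega}$ is closed, because $\omega$ is closed and $\dd\theta$ is exact.
\item $\mathfrak{i}^\ast\widetilde{\omega}=\omega$, because $\theta$ vanishes along the zero section, where $\mu=0$.
\item $\widetilde{\omega}$ is non-degenerate at points of the zero section. There $\widetilde{\omega} = \omega + \dd\mu_A\wedge\dd f^A - P^A_a\,\dd\mu_A\wedge\dd x^a$. Suppose a vector $w=(u,\nu)$ with $u\in \T_mM$ and $\nu\in E_m$ lies in the kernel.
\begin{itemize}
\item Pairing $w$ with the fibre directions $\partial_{\mu_A}$ gives $P(u)=0$, so $u\in H$.
\item Pairing $w$ with $\mathcal{V}$ gives $\nu=0$.
\item Pairing $w$ with $H$ then gives $i_u\omega|_H=0$. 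Since $\omega|_H$ is non-degenerate (because $H\cong \T M/\mathcal{V}$), we get $u=0$.
\end{itemize}
Non-degeneracy is an open condition, so $\widetilde{\omega}$ is symplectic on a neighbourhood $\widetilde{M}$ of the zero section.
\item $M$ is coisotropic. By dimension count, $\dim\widetilde{M} = \dim M + r$, so $(\T M)^{\perp}$ has dimension $r$. It contains $\mathcal{V}$, since $\mathcal{V}$ pairs to zero with $\T M$ under $\omega$ and under $\dd\mu_A\wedge(\dots)$ restricted to $\T M$. Hence $(\T M)^\perp=\mathcal{V}\subset \T M$.
\end{enumerate}

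Uniqueness up to neighbourhood equivalence is the main obstacle, and we plan a Moser argument. Let $(\widetilde{M}_1,\widetilde{\omega}_1)$ and $(\widetilde{M}_2,\widetilde{\omega}_2)$ be two thickenings.
\begin{enumerate}
\item The symplectic normal bundle $\T\widetilde{M}_j|_M/\T M$ is identified with $\mathcal{V}^*$ via $w\mapsto \widetilde{\omega}_j(w,\cdot)|_{\mathcal{V}}$. This lets us choose tubular neighbourhood maps and obtain a diffeomorphism $\phi$ between neighbourhoods of $M$, equal to the identity on $M$, with $\phi^\ast\widetilde{\omega}_2=\widetilde{\omega}_1$ at points of $M$ (using the linear algebra of step 3, with a common complement $H$).
\item Set $\omega_t=(1-t)\widetilde{\omega}_1+t\phi^\ast\widetilde{\omega}_2$. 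These forms agree on $M$ for all $t$, hence are non-degenerate near $M$ for $t\in[0,1]$.
\item The relative Poincaré lemma gives $\phi^\ast\widetilde{\omega}_2-\widetilde{\omega}_1=\dd\beta$ with $\beta$ vanishing to first order on $M$.
\item Solving $i_{X_t}\omega_t=-\beta$ gives a time-dependent vector field vanishing on $M$. Its flow exists up to time $1$ near $M$ and provides the required symplectomorphism fixing $M$.
\end{enumerate}
The delicate points are the normal-form identification in the first step, which depends on the choice of complement $H$, and the fact that $\beta$ vanishes to first order on $M$ in the third step. These are exactly the ingredients Gotay handles in \cite{gotay1}.
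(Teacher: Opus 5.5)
Your proposal is correct. The existence part is exactly the construction the paper records in \cref{Rem: Construction of thickening}: a neighbourhood of the zero section of $\mathcal{V}^\ast$ with $\widetilde{\omega}=\tau^\ast\omega+\dd\vartheta^P$. You also correctly supply the closedness, pull-back, non-degeneracy and coisotropy checks that the paper leaves implicit.

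For uniqueness the paper only cites Gotay, and your Moser/relative Poincar\'e outline is the standard argument. The step you flag as delicate is choosing a Lagrangian complement of $\mathcal{V}$ inside $H^{\perp}$, so as to put $\widetilde{\omega}_j|_M$ in normal form; this is precisely \cref{Lemma: Lagrangian complement trick} applied with $J=0$.
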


\begin{remark}[\textsc{Construction of the symplectic thickening}]
\label{Rem: Construction of thickening}
The construction of $\widetilde{M}$ (see \cite{luca,zoo}) requires choosing an almost product structure $P$ of the type \eqref{Def: Almost Product Structure}. 
%%%
Locally, using Darboux coordinates $(q^a, p_a, f^A)$ such that $\omega = \dd q^a \wedge \dd p_a$ and $\mathcal{V} = \operatorname{span}\{\frac{\partial}{\partial f^A}\}$, the projector $P$ (which defines $\mathcal{H} = \ker P$) is given by
\be
P = P^A \otimes \frac{\partial}{\partial f^A} = \left(\dd f^A - {P_q}^A_a \dd q^a - {P_p}^{Aa} \dd p_a\right) \otimes \frac{\partial}{\partial f^A} \,.
\ee
%%%
The thickening $\widetilde{M}$ is a neighborhood of the zero section in the dual bundle $\mathcal{V}^\ast$, with coordinates $(q^a, p_a, f^A, {\mu}_A)$. 
%%%
The symplectic form is $\widetilde{\omega} := \tau^\ast \omega + \dd \vartheta^P$, where $\vartheta^P = {\mu}_A P^A$ is the tautological 1-form. 
%%%
In local coordinates:
\be
\widetilde{\omega} = \dd q^a \wedge \dd p_a + \dd {\mu}_A \wedge P^A - {\mu}_A \dd P^A \,.
\ee
%%%
It is symplectic only in a tubular neighborhood of the zero-section of $\tau$ (${\mu_A} \approx 0$, namely $\mu_A$ approaching zero), unless $P$ has a vanishing Nijenhuis tensor \cite{dLR1989}.
%%%
\end{remark}

\noindent The coisotropic embedding theorem provides the tool to regularize any consistent pre-symplectic Hamiltonian system.
%%%
If we start with an inconsistent system, we first apply the PCA to get the consistent system $(M_f, \omega_f, H_f)$. 
%%%
We then apply \cref{Thm: Coisotropic Embedding Theorem} to this $M_f$.
%%%

\noindent In both scenarios, the procedure is the same: we embed the consistent pre-symplectic manifold $(M, \omega)$ (which could be $M_0$ or $M_f$) into its symplectic thickening $(\widetilde{M}, \widetilde{\omega})$.
%%%
We extend the Hamiltonian $H$ to $\widetilde{H} \in C^\infty(\widetilde{M})$ as $\widetilde{H} = \tau^\ast H$. 
%%%
The new system $(\widetilde{M}, \widetilde{\omega}, \widetilde{H})$ is regular (symplectic), and its unique Hamiltonian vector field $X_{\widetilde{H}}$ is easily shown to be tangent to $M$, providing a (gauge-fixed) unique dynamical evolution for the original system.
%%%
Indeed, let us introduce the adapted basis of vector fields on $\widetilde{M}$:
\be
\left\{\, H_a \,=\, \frac{\partial}{\partial x^a} + P^A_a \frac{\partial}{\partial f^A} \,,\quad V_A \,=\, \frac{\partial}{\partial f^A} \,,\quad W^A \,=\, \frac{\partial}{\partial \mu_A} \,\right\} \,,
\ee
and its dual coframe of 1-forms:
\be
\left\{\, \dd x^a \,,\quad P^A \,=\, \dd f^A - P^A_a \dd x^a \,,\quad \dd \mu_A \,\right\} \,.
\ee
%%%
Notice that the characteristic distribution of $\omega$ is locally spanned by $V_A$, meaning $\ker \omega = \operatorname{span}\{V_A\}$, and the original pre-symplectic form locally reads $\omega = \frac{1}{2}\omega_{ab} \dd x^a \wedge \dd x^b$.
%%%
Since the extended Hamiltonian $\widetilde{H} = \tau^* H$ depends only on the base coordinates $(x^a, f^A)$, its exterior derivative can be naturally expanded in the dual coframe as:
\be
\dd \widetilde{H} \,=\, \frac{\partial H}{\partial x^a} \dd x^a + \frac{\partial H}{\partial f^A} \dd f^A \,=\, H_a(H) \dd x^a + V_A(H) P^A \,.
\ee
%%%

\noindent On the other hand, the thickened symplectic form is defined as $\widetilde{\omega} = \pi^*\omega + \dd(\mu_A 
{P}^A)$.
%%%
As discussed in \cref{Thm: Coisotropic Embedding Theorem} it is symplectic only in a tubular neighborhood of the zero-section of $\tau$ (where $\mu_A \approx 0$), and it reads
\be
\widetilde{\omega} \,=\, \frac{1}{2}\omega_{ab} \dd x^a \wedge \dd x^b + \dd \mu_A \wedge P^A \,.
\ee
%%%

\noindent The Hamiltonian vector field in the adapted basis considered reads $X_{\widetilde{H}} \,=\, X^a H_a + X^A V_A + X_{\mu_A} W^A$. 
%%%
Computing its interior product with $\widetilde{\omega}$ gives:
\be
i_{X_{\widetilde{H}}} \widetilde{\omega} \,=\, X^a \omega_{ab} \dd x^b + X_{\mu_A} H^A - X^A \dd \mu_A \,.
\ee
%%%
Imposing the Hamiltonian condition $i_{X_{\widetilde{H}}} \widetilde{\omega} \,=\, \dd \widetilde{H}$ and matching the coefficients of the linearly independent 1-forms, we obtain the system:
\begin{align}
    X^a \omega_{ab} \,&=\, H_b(H) \,, \label{Eq: Horizontal dynamics} \\
    -X^A \,&=\, 0 \;\;\implies\;\; X^A \,=\, 0 \,, \label{Eq: Vertical gauge fixing} \\
    X_{\mu_A} \,&=\, V_A(H) \,=\,  \frac{\partial H}{\partial f^A} \,. \label{Eq: Transverse component}
\end{align}
%%%
Since the original pre-symplectic Hamiltonian system is consistent by hypothesis, the original Hamiltonian $H$ must annihilate the kernel of $\omega$, meaning $\dd H(V_A) = 0$, which translates to $\frac{\partial H}{\partial f^A} = 0$.
%%%
Substituting this into \cref{Eq: Transverse component}, we find that $X_{\mu_A} = 0$ everywhere on $M$.
%%%
Since the transversal components $X_{\mu_A}$ along the directions $W^A$ vanish on the zero section, the dynamical vector field $X_{\widetilde{H}}$ does not point outside the submanifold, meaning it is strictly tangent to $M$.
%%%

\subsection{Coisotropic regularization of degenerate Lagrangian systems}
\label{Sec: Degenerate Lagrangian systems}

We now shift our focus to the Lagrangian formalism. 
%%%
Using the intrinsic geometry of the tangent bundle, we recall how the dynamics of any Lagrangian system (regular or degenerate) can be formulated in a "Hamiltonian-like" manner on the velocity phase space $\T Q$.
%%%
The presence of the tangent bundle structure $(S, \Delta)$ requires slightly modifying the coisotropic regularization scheme presented for Hamiltonian systems.
%%%

\noindent First, using the geometry of the tangent bundle described in \cref{Subsec: Tangent structures}, let us recall the geometric definition of a second-order differential equation and let us define the geometric structures associated with any Lagrangian $L \in C^\infty(\T Q)$. 
%%%

\begin{definition}[\textsc{Second Order Differential Equation (SODE)}]
\label{Def: SODE}
A vector field $X \in \mathfrak{X}(\T Q)$ is a \textbf{Second Order Differential Equation (SODE)} field if its integral curves $\gamma(t) = (q^j(t), v^j(t))$ correctly relate the position and velocity coordinates, i.e., they satisfy the kinematic condition $\frac{\dd q^j}{\dd t} = v^j$.
%%%
\noindent In local coordinates $(q^j, v^j)$, a general vector field $X$ reads 
\be
X = A^j(q, v) \frac{\partial}{\partial q^j} + B^j(q, v) \frac{\partial}{\partial v^j} \,.
\ee
%%%
Its integral curves satisfy $\dot{q}^j = A^j$.
%%%
Therefore, for a SODE we must have $A^j = v^j$.
%%%

\noindent Intrinsically, this condition is expressed by
\be \label{Eq: SODE condition}
S(X) = \Delta \,.
\ee
%%%
\end{definition}
%%%

\begin{definition}[\textsc{Regular Lagrangian system}]
\label{Def: Regular Lagrangian}
A \textbf{Lagrangian system} is a pair $(Q, L)$. 
%%%
We define:
\begin{itemize}
    \item The \textbf{Poincaré-Cartan 1-form} $\theta_L := \dd_S L = S^*(\dd L)$, locally reading
    \be
    \theta_L \,=\, \frac{\partial L}{\partial v^j} \dd q^j \,.
    \ee
    %%%
    \item The \textbf{Lagrangian 2-form} $\omega_L := -\dd \theta_L$, locally reading 
    \be
    \omega_L \,=\, \frac{\partial^2 L}{\partial v^j \partial v^k} \dd q^j \wedge \dd v^k - \frac{\partial^2 L}{\partial q^k \partial v^j} \dd q^k \wedge \dd q^j \,.
    \ee
    %%%
    \item The \textbf{Lagrangian energy} $E_L := \Delta(L) - L$.
    %%%
\end{itemize}
%%%
A system is \textbf{regular} if $\omega_L$ is symplectic (i.e., its Hessian matrix $\left(W_{ij} = \frac{\partial^2 L}{\partial v^i \partial v^j}\right)$ is non-singular).
\end{definition}
%%%

\noindent The following theorem gives sufficient (which are, trivially, also necessary) conditions for a $2$-form $\omega$ on $\T Q$ to be Lagrangian.
%%%

\begin{theo}[\textsc{Characterization of Lagrangian 2-forms}]
\label{Thm: Helmholtz Conditions}
A 2-form $\omega$ on $\T Q$ is a (local) Lagrangian 2-form, i.e., $\omega = \omega_L = -\dd \dd_S L$ for some $L \in C^\infty(\T Q)$, if and only if it satisfies the following conditions:
\begin{align}
    \dd \omega \,=&\, 0 \,, \label{Eq: closure} \\
    \omega(SX,\,Y) \,=&\, \omega(SY,\,X) \,, \quad \forall X, Y \in \mathfrak{X}(\T Q) \,. \label{Eq: symmetry}
\end{align}
\begin{proof}
$(\implies)$ Condition \eqref{Eq: closure} implies that $\omega$ can be locally written as
\be
\omega \,=\, \dd (A_j \dd q^j + B_j \dd v^j) \,.
\ee
%%%
Condition \eqref{Eq: symmetry} for any pair of the type $X \,=\, \frac{\de}{\de q^j}$, $Y \,=\, \frac{\de}{\de v^k}$ gives
\be
\omega\left(\frac{\de}{\de v^j},\,\frac{\de}{\de v^k}\right) \,=\, 0 \,,\;\;\; \forall\,\, j,k=1,...,n \,,
\ee
namely that
\be
\frac{\de B_{[j}}{\de v^{k]}} \,=\, 0 \,,\forall\,\, j,k=1,...,n
\ee
(square brackets denoting skew-symmetrization) i.e., 
\be
B_j \,=\, \frac{\de B}{\de v^j}
\ee
for some $B \in \mathcal{C}^\infty(\T Q)$.
%%%
Thus, $\omega$ locally reads
\be
\omega \,=\, \dd \left(A_j \dd q^j + \frac{\de B}{\de v^j} \dd v^j \right) \,=\, \dd \left( A_j \dd q^j + \dd B - \frac{\de B}{\de q^j} \dd q^j \right) \,=\, \dd (C_j \dd q^j) \,,
\ee
for
\be
C_j \,=\, A_j - \frac{\de B}{\de q^j} \,.
\ee
%%%
On the other hand, condition \eqref{Eq: symmetry} for any pair of the type $X \,=\, \frac{\de}{\de q^j}$, $Y \,=\, \frac{\de}{\de q^k}$, gives
\be
\omega\left(\frac{\de}{\de v^j},\, \frac{\de}{\de q^k}\right) \,=\, \omega\left(\frac{\de}{\de v^k},\, \frac{\de}{\de q^j}\right) \,.
\ee
%%%
It is easy to see that this latter condition implies that
\be
\frac{\de C{[j}}{\de v^{k]}} \,=\, 0 \,,\forall\,\, j,k=1,...,n \,,
\ee
i.e.,
\be
C_j \,=\, \frac{\de L}{\de v^j} \,,
\ee
for some $L \in \mathcal{C}^\infty(\T Q)$.
%%%
This proves that there exists a local function $L$ on $\T Q$ such that $\omega \,=\, \dd \dd_S L$.
%%%

$(\impliedby)$ The necessity of condition \eqref{Eq: closure}-\eqref{Eq: symmetry} is trivial and follows from a straightforward computation. 
%%%
\end{proof}
\end{theo}
%%%

\noindent The solutions of a regular system are the integral curves of the unique \textit{SODE} (Second Order Differential Equation) field $X_L$ (namely, satisfying $S(X_L) = \Delta$), which is the unique solution to the intrinsic Euler-Lagrange equation
\be \label{Eq: Geometric EL eq}
i_{X_L} \omega_L = \dd E_L \,.
\ee
%%%

\begin{definition}[\textsc{Degenerate Lagrangian systems}]
\label{Def: Degenerate Lagrangian}
A Lagrangian system $(Q, L)$ is \textbf{degenerate} if $\omega_L$ is degenerate (pre-symplectic).
%%%
A degenerate Lagrangian system is precisely a \textbf{pre-symplectic Hamiltonian system} $(\T Q, \omega_L, E_L)$, but one which carries the "extra" kinematic constraint that its physical dynamics must be a SODE field.
%%%
\end{definition}

\noindent As in the Hamiltonian case, this pre-symplectic system $(\T Q, \omega_L, E_L)$ can be either inconsistent or consistent.
%%%

\paragraph{Inconsistent Lagrangian Systems.}
In this case $\ker \omega_L \not\subseteq \ker \dd E_L$, and, thus, the equation $i_X \omega_L = \dd E_L$ is not solvable on the entire $\T Q$, meaning no global dynamical field $X$ exists.
%%%

\noindent To find the (sub)manifold where consistent solutions exist, one must apply a constraint algorithm. 
%%%
Historically, this problem was tackled by the celebrated \textit{Dirac-Bergmann algorithm} \cite{dirac}. 
%%%
This term often encompasses two related but distinct procedures.
%%%

\textit{Dirac's Hamiltonian Algorithm}, developed by \textit{P.A.M. Dirac} and \textit{P.G. Bergmann} \cite{dirac}, follows the following steps:
\begin{itemize}
    \item The Legendre map $\mathcal{F}L \colon \T Q \to \T^\ast Q$, defined by the fiber derivative 
    \be
    \langle \mathcal{F}L(v), w \rangle := \dv{s} L(v+sw)\Big|_{s=0} \,, 
    \ee
    and locally reading
    \be
    (q^j,\, v^j) \mapsto \left(\,q^j,\, p_j = \frac{\de L}{\de v^j}\,\right) \,,
    \ee
    is assumed to be \textit{almost-regular} (meaning the Hessian $W_{ij}$ has constant rank). 
    %%%
    Consequently, its image $M_1 := \mathcal{F}L(\T Q)$ is a submanifold of $\T^\ast Q$ defined by a set of \textit{primary constraints} $\Phi_a^{(1)}(q, p) \approx 0$ (where $\approx$ means that the equality should be fulfilled along solutions of the equations of motion).
    %%%
    \item A \textit{canonical Hamiltonian} $H_C \in C^\infty(M_1)$ is defined (as it can be shown that $E_L$ is constant on the fibers of $\mathcal{F}L$ \cite{gotay_nester1}). This $H_C$ is then extended to a Hamiltonian $H_E \in C^\infty(\T^\ast Q)$ on the ambient space.
    \item The \textit{total Hamiltonian} is defined as $H_T = H_E + \mu^a \Phi_a^{(1)}$, where $\mu^a$ are arbitrary functions (Lagrange multipliers).
    \item The algorithm imposes the consistency condition $\dot{\Phi}_a^{(1)} \approx \{ \Phi_a^{(1)}, H_T \}_{\omega_Q} \approx 0$. This procedure iteratively generates a set of \textit{secondary} (and tertiary, etc.) constraints, defining a final constraint manifold $M_f \subset \T^\ast Q$.
\end{itemize}

\begin{remark}[\textsc{Relation between Dirac's algorithm and the PCA}]
\label{Rem: Dirac vs PCA}
We can now clarify the relationship between Dirac's algorithm and the geometric PCA (as defined in \cref{Sec: Pre-symplectic Hamiltonian systems}). Dirac's algorithm is, in essence, the local coordinate version of the geometric PCA.
%%%
Indeed, as it is proven in \cite{gotay_nester1}, Dirac's $k$-ary constraints $\Phi^{(k)} \approx 0$ locally select the submanifold $M_k \subset \T^\ast Q$. Furthermore, the Hamiltonian vector field $X_{\Phi^{(k)}}$ (associated with a $k$-ary constraint via the symplectic structure $\omega_{\T^*Q}$ of $\T^* Q$) is tangent to the constraint submanifold $M_k$ and belongs to the space $(T M_k)^{\perp_{\omega_{\T^*Q}}}$ used in the PCA iteration.
%%%
On the other hand, it is also proven that any vector field $Y \in (T_m M_{k-1})^{\perp_{\omega_{\T^*Q}}}$ (the space used by the PCA) gives rise to a local constraint condition $(\dd H_E)(Y) \approx 0$, which is precisely how Dirac generates the next set of constraints.
%%%
Therefore, one can conclude that the constraint conditions imposed by Dirac are the local coordinate expressions of the geometric conditions that select the submanifolds $M_k$ of the PCA.
%%%

\noindent However, a conceptual difference between the two approaches exists.
%%%
Dirac's algorithm works on the whole Phase Space $\T^\ast Q$ along the constraint submanifolds $M_k$, and defines solutions (vector fields $X_T$) on the whole $\T^\ast Q$. 
%%%
In contrast, one could apply the PCA (as defined in \cref{Sec: Pre-symplectic Hamiltonian systems}) intrinsically, starting from the pre-symplectic manifold $(M_1, \omega_1 = \omega_{\T^*Q}|_{M_1})$ as the "ambient" space.
%%%
As noted in \cite{gotay_nester1}, these two procedures are equivalent and stabilize on the same final constraint manifold $M_f$.
%%%

\noindent In this respect, it is proven in \cite{gotay_nester1} that the intrinsic solution $X_f$ (found on $M_f$ and satisfying $i_{X_f} \omega_f = \dd H_f$) can be lifted to a solution $X_T$ on the ambient space $\T^\ast Q$ satisfying the equations for the total Hamiltonian, $i_{X_T} \omega_Q = \dd H_T$.
\end{remark}
%%%

On the other hand, \textit{Bergmann's Lagrangian Algorithm}, developed by \textit{P.G. Bergmann} \cite{bergmann,bergmann2}, operates entirely on the tangent bundle (the velocity space) $\T Q$.
%%%%
It follows the steps:
\begin{itemize}
    \item The Euler-Lagrange equations are $W_{ij} \ddot{q}^j + (\frac{\partial^2 L}{\partial v^i \partial q^j} v^j - \frac{\partial L}{\partial q^i}) = 0$, where $(W_{ij} = \frac{\partial^2 L}{\partial v^i \partial v^j})$ is the singular Hessian.
    \item Contracting with a vector $Y^i$ in the kernel of $W$ ($Y^i W_{ij} = 0$) annihilates the $\ddot{q}$ term, yielding the \textit{primary constraints} $\Phi_a^{(1)}(q, v) \approx Y_a^i (\frac{\partial^2 L}{\partial v^i \partial q^j} v^j - \frac{\partial L}{\partial q^i}) \approx 0$.
    \item The algorithm imposes consistency by differentiating these constraints, $\frac{\dd}{\dd t} \Phi_a^{(1)}(q, v) \approx 0$. This introduces $\ddot{q}$ terms, which are then replaced using the "evolutive" part of the E-L equations, generating \textit{secondary constraints} $\Phi_b^{(2)}(q, v) \approx 0$.
\end{itemize}
%%%

\begin{remark}[\textsc{Relation between Bergmann's algorithm and the PCA}]
In our intrinsic formulation (\cref{Def: Degenerate Lagrangian}), the degenerate system is the pre-symplectic system $(\T Q, \omega_L, E_L)$. It can be proven that Bergmann's algorithm is exactly the application of the PCA to this system.
%%%
\noindent Indeed, the PCA begins by defining $M_1 = \{ m \in \T Q \mid (\dd E_L)_m(Y) = 0, \;\forall Y \in \ker(\omega_L)_m \}$.
%%%
\noindent Let us identify the kernel $K = \ker \omega_L$. It can be locally decomposed as $K = K_V \oplus K_H$, where $K_V = K \cap \operatorname{Im} S$ (the vertical kernel, related to $\ker W$) and $K_H$ is a horizontal complement (related to $\ker W$ in the $\partial/\partial q$ directions).
%%%
\noindent We test the PCA condition $(\dd E_L)(Y) = 0$ on both parts:
\begin{itemize}
    \item For the vertical kernel $Y_Z = Z^i \frac{\partial}{\partial v^i} \in K_V$ (where $W_{ij} Z^j = 0$):
    \be
        (\dd E_L)(Y_Z) = \left( \frac{\partial E_L}{\partial v^i} \right) Z^i = (v^k W_{ki}) Z^i = v^k (W_{ik} Z^i) = 0 \,.
    \ee
    This condition is satisfied identically because $Z \in \ker W$. The vertical kernel generates no constraints.
    \item For the horizontal kernel $Y_Y = Y^i \frac{\partial}{\partial q^i} \in K_H$ (where $W_{ij} Y^j = 0$):
    \be
        (\dd E_L)(Y_Y) = \left( \frac{\partial E_L}{\partial q^i} \right) Y^i = \left( v^j \frac{\partial^2 L}{\partial q^i \partial v^j} - \frac{\partial L}{\partial q^i} \right) Y^i \approx 0 \,.
    \ee
\end{itemize}
This last equation is \textit{precisely} the set of primary constraints $\Phi_a^{(1)}(q, v) \approx 0$ derived from Bergmann's Lagrangian algorithm.
%%%
\noindent Since the subsequent steps of both algorithms are defined by the same iterative tangency requirement, the two algorithms are equivalent.
\end{remark}

The PCA only checks for the existence of some vector field $X$ satisfying $i_X \omega_L = \dd E_L$.
%%%
\noindent It does not check if this $X$ is a SODE field (i.e., $S(X) = \Delta$).
%%%
A system can be Hamilton-consistent ($M_f \neq \emptyset$) but Lagrangian-inconsistent if none of the solutions $X$ on $M_f$ are SODEs.
%%%

\noindent To solve this, one should use a "SODE-compatible" PCA. At each step $k$, one defines the next manifold $F_k \subset F_{k-1}$ as the locus of points $m$ where there exists a vector $X_m \in \T_m(\T Q)$ that satisfies \textit{all three} conditions:
\begin{enumerate}
    \item \textit{Hamiltonian condition:} $(i_{X_m} \omega_L + \dd E_L)_m (Y) = 0$ for all $Y \in (T_m F_{k-1})^{\perp_{\omega_L}}$.
    \item \textit{SODE condition:} $S(X_m) = \Delta_m$.
    \item \textit{Tangency condition:} $X_m \in \T_m F_{k-1}$.
\end{enumerate}
If this algorithm converges, it finds a final constraint manifold $F_f$ where the Lagrangian system becomes consistent, and which is, in general, a subset of the Hamiltonian one, $F_f \subseteq M_f$.
%%%

\noindent The case of inconsistent Lagrangian systems forces us to first apply the LCA to find the physical manifold $F_f$.
%%%
In general, there is no reason to expect $F_f$ to be a tangent bundle itself.

%%%
At this stage, the system is $(F_f, (\omega_L)_f, (E_L)_f)$, which is a consistent pre-symplectic system (generically with gauge freedom $\ker(\omega_L)_f \neq \{0\}$), but it is no longer a Lagrangian system.
%%%
\noindent To regularize the remaining gauge freedom, one can apply the coisotropic embedding (\cref{Thm: Coisotropic Embedding Theorem}) to $(F_f, (\omega_L)_f)$.
%%%
\noindent The result is a symplectic manifold $(\widetilde{F_f}, \widetilde{\omega_f})$. This manifold is generic, non-Lagrangian, and has lost the original physical tangent structure. 
%%%
This path solves the constraint problem but "destroys" the Lagrangian structure.
%%%
\begin{example}[\textsc{Affine Lagrangians}] 
A particularly enlightening example of the problem presented above is that of affine Lagrangians. Indeed, let $\alpha \in \Omega^1(Q)$ be a $1$-form and $f \in C^\infty(Q)$ be a function. Let 
\[
L \colon \T Q \longrightarrow \mathbb{R}\,,   \qquad L(v) := \alpha(v) + f(\pi_Q(v))\,.
\]
Locally, if $\alpha =  \alpha_i \dd q^i$, we have $L = \alpha_i \dot{q}^i + f(q)$. The Poincaré--Cartan form in this scenario is 
\[
\omega_L = - \dd \left(\alpha_i \dd q^i\right) = - \dd \alpha\,.
\]
And the Lagrangian energy is $E_L = -f$. Then, the equations of motion are 
$
\iota_{X} \dd \alpha = \dd f\,,
$
so that it is actually a first order equation on $Q$, and may be regarded as a pre-symplectic system. Then, the constraint algoritm on $(\T Q, L)$ is $\pi_Q$-related to the algorithm on the pre-symplectic system $(Q, \dd \alpha, f)$ as follows:
% https://q.uiver.app/#q=WzAsOCxbMywxLCJRIl0sWzMsMCwiXFxUIFEiXSxbMiwxLCJNXzEiXSxbMiwwLCJGXzEiXSxbMCwwLCJGX2YiXSxbMCwxLCJNX2YiXSxbMSwwLCJcXGNkb3RzIl0sWzEsMSwiXFxjZG90cyJdLFs0LDVdLFszLDJdLFsxLDBdLFszLDEsIiIsMSx7InN0eWxlIjp7InRhaWwiOnsibmFtZSI6Imhvb2siLCJzaWRlIjoidG9wIn19fV0sWzIsMCwiIiwxLHsic3R5bGUiOnsidGFpbCI6eyJuYW1lIjoiaG9vayIsInNpZGUiOiJ0b3AifX19XSxbNiwzLCIiLDEseyJzdHlsZSI6eyJ0YWlsIjp7Im5hbWUiOiJob29rIiwic2lkZSI6InRvcCJ9fX1dLFs0LDYsIiIsMSx7InN0eWxlIjp7InRhaWwiOnsibmFtZSI6Imhvb2siLCJzaWRlIjoidG9wIn19fV0sWzcsMiwiIiwxLHsic3R5bGUiOnsidGFpbCI6eyJuYW1lIjoiaG9vayIsInNpZGUiOiJ0b3AifX19XSxbNSw3LCIiLDEseyJzdHlsZSI6eyJ0YWlsIjp7Im5hbWUiOiJob29rIiwic2lkZSI6InRvcCJ9fX1dXQ==
\[\begin{tikzcd}[cramped]
	{F_f} & \cdots & {F_1} & {\T Q} \\
	{M_f} & \cdots & {M_1} & Q
	\arrow[hook, from=1-1, to=1-2]
	\arrow[from=1-1, to=2-1]
	\arrow[hook, from=1-2, to=1-3]
	\arrow[hook, from=1-3, to=1-4]
	\arrow[from=1-3, to=2-3]
	\arrow[from=1-4, to=2-4]
	\arrow[hook, from=2-1, to=2-2]
	\arrow[hook, from=2-2, to=2-3]
	\arrow[hook, from=2-3, to=2-4]
\end{tikzcd}\,,\]
and we have the equality 
\[
F_k = \{v \in \T_p M_{k-1} \colon\, p \in M_k \quad \text{and} \quad \iota_v \dd \alpha = \dd f\}\,.
\]
In particular, $(M_f, \dd \alpha, \dd f)$ is a consistent pre-symplectic Hamiltonian system and $F_f \longrightarrow M_f$ is an affine bundle modeled over ${\undertilde{K}} = \ker \dd \alpha$. Then, the charateristic distribution on $F_f$ is $\undertilde{K}^{C} |_{F_f}$, and the thinkenning is (an open subset of) 
\[
\widetilde{F}_f = \left( \undertilde{K}^{C}\right)^\ast |_{F_f} \longrightarrow F_f\,.
\]
In particular, by taking adapted coordinates $(x^a, f^A)$ to $\undertilde{K}$, the coordinates on $\widetilde{F}_f$ are $(x^a, f^A, \dot{f}^A, \mu_A, \dot{\mu}_A )$, and there is no reason to expect $\widetilde F_f$ to be a tangent bundle.
\end{example}

\begin{remark}[\textsc{Gotay and Nester's Lagrangian constraint algorithm}]
\label{remark:Gotay_constraint_algorithm}
There are different ways of performing the constraint algorithm to ensure that the final constraint not only has well-defined (tangent) dynamics, but the dynamics can be chosen to satisfy the SODE condition. To our knowledge, the most standard algorithm is the Lagrangian constraint algorithm by Gotay and Nester \cite{gotay_nester2}. The algorithm proceeds as follows. Let $L \in C^\infty(\T Q)$ be a singular Lagrangian. Then, instead of requiring the SODE condition at each step, one follows the Hamiltonian version of the algorithm, namely, by setting $M_0 = \T Q$ and then
\[
M_k := \{ m \in M_{k-1} \mid (\dd H)_m(Y) = 0, \;\forall Y \in (T_m M_{k-1})^{\perp_\omega} \}\,.\]
Suppose that the sequence $M_0, M_1, \dots$ stabilizes at $M_f$. Then, there is a vector field $X \in \mathfrak{X}(M_f)$ such that 
\[
i_{X} \omega_f = \dd E_f\,,
\]
where $\omega_f = (\omega_L) |_{M_f}$ and $E_f := (E_L)|_{M_f}$. However, the vector field $X$ may not satisfy the SODE condition. The idea by Gotay and Nester is to find a submanifold of $M_f$, in which $X$ can be chosen to satisfy it. The construction employs the Legendre transformation, and that the algorithms (in the Lagrangian and Hamiltonian side) are conveniently related by it. Indeed, when $L$ is almost regular, by defining 
\[
P_0 := \operatorname{Leg}_L\left( \T Q\right) \subset \T^\ast Q\,,
\]
one has a fibration $\operatorname{Leg}_L \colon \T Q \longrightarrow P_0$. One can show that the Energy $E_L$ is constant along this fibers, so that there is a well defined Hamiltonian
% https://q.uiver.app/#q=WzAsMyxbMCwwLCJcXFQgUSJdLFswLDEsIlBfMCJdLFsxLDEsIlxcbWF0aGJie1J9Il0sWzAsMiwiRV9MIl0sWzAsMSwiXFxvcGVyYXRvcm5hbWV7TGVnfV9MIiwyXSxbMSwyLCJIXzAiLDJdXQ==
\[\begin{tikzcd}[cramped]
	{\T Q} & \\
	{P_0} & {\mathbb{R}}
	\arrow["{\operatorname{Leg}_L}"', from=1-1, to=2-1]
	\arrow["{E_L}", from=1-1, to=2-2]
	\arrow["{H_0}"', from=2-1, to=2-2]
\end{tikzcd}\,.\]
Then, by denoting $P_0, P_1, \dots$ the submanifolds obtained by the pre-symplectic constraint algorihtm applied to $(P_0, (\omega_Q)|_{P_0}, H_0)$, one has that they are related by the Legendre transformation:
% https://q.uiver.app/#q=WzAsOSxbMywxLCJQXzAiXSxbMiwxLCJQXzEiXSxbMCwxLCJQX2YiXSxbNCwxLCJcXFReXFxhc3QgTSJdLFs0LDAsIlxcVCBNIl0sWzMsMCwiTV8xIl0sWzEsMCwiTV9mIl0sWzIsMCwiXFxjZG90cyJdLFsxLDEsIlxcY2RvdHMiXSxbNCwwLCJcXG9wZXJhdG9ybmFtZXtMZWd9X0wiLDJdLFs1LDEsIlxcb3BlcmF0b3JuYW1le0xlZ31fTCIsMl0sWzYsMiwiXFxvcGVyYXRvcm5hbWV7TGVnfV9MIiwyXSxbNSw0LCIiLDEseyJzdHlsZSI6eyJ0YWlsIjp7Im5hbWUiOiJob29rIiwic2lkZSI6InRvcCJ9fX1dLFs2LDcsIiIsMCx7InN0eWxlIjp7InRhaWwiOnsibmFtZSI6Imhvb2siLCJzaWRlIjoidG9wIn19fV0sWzcsNSwiIiwwLHsic3R5bGUiOnsidGFpbCI6eyJuYW1lIjoiaG9vayIsInNpZGUiOiJ0b3AifX19XSxbMiw4LCIiLDIseyJzdHlsZSI6eyJ0YWlsIjp7Im5hbWUiOiJob29rIiwic2lkZSI6InRvcCJ9fX1dLFs4LDEsIiIsMSx7InN0eWxlIjp7InRhaWwiOnsibmFtZSI6Imhvb2siLCJzaWRlIjoidG9wIn19fV0sWzEsMCwiIiwxLHsic3R5bGUiOnsidGFpbCI6eyJuYW1lIjoiaG9vayIsInNpZGUiOiJ0b3AifX19XSxbMCwzLCIiLDEseyJzdHlsZSI6eyJ0YWlsIjp7Im5hbWUiOiJob29rIiwic2lkZSI6InRvcCJ9fX1dXQ==
\[\begin{tikzcd}[cramped]
	& {M_f} & \cdots & {M_1} & {\T Q} \\
	{P_f} & \cdots & {P_1} & {P_0} & {\T^\ast Q}
	\arrow[hook, from=1-2, to=1-3]
	\arrow["{\operatorname{Leg}_L}"', from=1-2, to=2-1]
	\arrow[hook, from=1-3, to=1-4]
	\arrow[hook, from=1-4, to=1-5]
	\arrow["{\operatorname{Leg}_L}"', from=1-4, to=2-3]
	\arrow["{\operatorname{Leg}_L}"', from=1-5, to=2-4]
	\arrow[hook, from=2-1, to=2-2]
	\arrow[hook, from=2-2, to=2-3]
	\arrow[hook, from=2-3, to=2-4]
	\arrow[hook, from=2-4, to=2-5]
\end{tikzcd}\,.\]
Under mild regularity conditions, $\operatorname{Leg}_L \colon M_f \longrightarrow P_f$ defines a fiber bundle. Gotay and Nester then solve the SODE problem as follows: Choose a vector field $X \in \mathfrak{X}(P_f)$ solving $i_X (\omega_L)_{P_f} = \dd (E_L)_{P_f}$, (where $(\omega_L)_{P_f}$ reads the pull-back of $(\omega_Q)|_{P_0}$ to $P_f$ and $(E_L)_{P_f}$ is the pull-back of $E_L$ to $P_f$)) and $Y \in \mathfrak{X}(M_f)$ which is $\operatorname{Leg}_L$-related to $X$, namely $(\operatorname{Leg}_L)_\ast Y = X$. Then, $Y$ solves the equation $i_Y (\omega_L)_{M_f} = \dd (E_L)_{M_f}$ where $(\omega_L)_{M_f}$ is the pull-back of $\omega_L$ to $M_f$ and $(E_L)_{M_f}$ is the pull-back of $E_L$ to $M_f$), but, as discussed, does not necessarily satisfy the SODE condition $S(Y) = \Delta$. Hence, one studies the defect 
\[Y^\ast = S(Y) - \Delta\,.\]
Let us show that $\lim_{t \to \infty} \psi^{ Y^\ast}_{t}$ (where $\psi^{Y^\ast}_t$ denotes the local flow of $Y^\ast$) exists and defines a submanifold $S_f \hookrightarrow M_f$ which is diffeomorphic to $P_f$ through the Legendre transformation. Indeed, if 
$
Y = a^i \pdv{q^i} + b^i \pdv{\dot{q}^i}\,,
$
we have 
\[
Y^\ast = (a^i- \dot{q}^i)\pdv{\dot{q}^i}\,.
\]
Since $Y$ is $\operatorname{Leg}_L$-projectable, $a^i$ is constant on the fibers of $M_f \longrightarrow P_f$ and, furthermore, since we have $i_{S(X)} \omega_L = i_{\Delta} \omega_L$ for every vector field $Y$, in particular we have $Y^\ast \in \ker \omega_L$. This, together with the fact that $\ker \dd \operatorname{Leg}_L = \ker \dd \pi_Q \cap \ker \omega_L$, implies that $Y^\ast$ is tangent to the fibers. Now it is clear that the integral curve of $Y^\ast$ through the point $(q_0^i, \dot{q}_0^i)$ is 
\[
\gamma(t) = (q_0^i, a^i + e^{-t}(\dot{q}_0^i - a^i))\,,
\]
so that its limit exists, and is the point with coordinates $(q^i_0, a^i)$. In particular, $S_f$ is the image of a section $\sigma \colon P_f \longrightarrow M_f$
% https://q.uiver.app/#q=WzAsMTAsWzMsMSwiUF8wIl0sWzIsMSwiUF8xIl0sWzAsMSwiUF9mIl0sWzQsMSwiXFxUXlxcYXN0IE0iXSxbNCwwLCJcXFQgTSJdLFszLDAsIk1fMSJdLFsxLDAsIk1fZiJdLFsyLDAsIlxcY2RvdHMiXSxbMSwxLCJcXGNkb3RzIl0sWzAsMCwiU19mIl0sWzQsMF0sWzUsMV0sWzYsMl0sWzUsNCwiIiwxLHsic3R5bGUiOnsidGFpbCI6eyJuYW1lIjoiaG9vayIsInNpZGUiOiJ0b3AifX19XSxbNiw3LCIiLDAseyJzdHlsZSI6eyJ0YWlsIjp7Im5hbWUiOiJob29rIiwic2lkZSI6InRvcCJ9fX1dLFs3LDUsIiIsMCx7InN0eWxlIjp7InRhaWwiOnsibmFtZSI6Imhvb2siLCJzaWRlIjoidG9wIn19fV0sWzIsOCwiIiwyLHsic3R5bGUiOnsidGFpbCI6eyJuYW1lIjoiaG9vayIsInNpZGUiOiJ0b3AifX19XSxbOCwxLCIiLDEseyJzdHlsZSI6eyJ0YWlsIjp7Im5hbWUiOiJob29rIiwic2lkZSI6InRvcCJ9fX1dLFsxLDAsIiIsMSx7InN0eWxlIjp7InRhaWwiOnsibmFtZSI6Imhvb2siLCJzaWRlIjoidG9wIn19fV0sWzAsMywiIiwxLHsic3R5bGUiOnsidGFpbCI6eyJuYW1lIjoiaG9vayIsInNpZGUiOiJ0b3AifX19XSxbOSw2LCIiLDAseyJzdHlsZSI6eyJ0YWlsIjp7Im5hbWUiOiJob29rIiwic2lkZSI6InRvcCJ9fX1dLFs5LDJdLFsyLDksIlxcc2lnbWEiLDAseyJvZmZzZXQiOi0zLCJjdXJ2ZSI6LTIsInN0eWxlIjp7ImJvZHkiOnsibmFtZSI6ImRhc2hlZCJ9fX1dXQ==
\[\begin{tikzcd}[cramped]
	{S_f} & {M_f} & \cdots & {M_1} & {\T Q} \\
	{P_f} & \cdots & {P_1} & {P_0} & {\T^\ast Q}
	\arrow[hook, from=1-1, to=1-2]
	\arrow[from=1-1, to=2-1]
	\arrow[hook, from=1-2, to=1-3]
	\arrow[from=1-2, to=2-1]
	\arrow[hook, from=1-3, to=1-4]
	\arrow[hook, from=1-4, to=1-5]
	\arrow[from=1-4, to=2-3]
	\arrow[from=1-5, to=2-4]
	\arrow["\sigma", shift left=3, curve={height=-12pt}, dashed, from=2-1, to=1-1]
	\arrow[hook, from=2-1, to=2-2]
	\arrow[hook, from=2-2, to=2-3]
	\arrow[hook, from=2-3, to=2-4]
	\arrow[hook, from=2-4, to=2-5]
\end{tikzcd}\,.\]
By definition, $Y^\ast$ vanishes on $S_f$, so that $Y$ solves the equation and satifies the SODE condition $S(Y) = \Delta$.
However $Y$ need not be tangent to $S_f$, but it is enough to consider $\widetilde Y := \sigma_\ast (X)$, which
\begin{itemize}
    \item Is tangent to $S_f$.
    \item Solves the equations by definition.
    \item Satisfies the SODE condition. Indeed, $Y$ satisfies it, and $Y - \widetilde Y$ is a vertical vector field (with respect to the projection $ \pi_Q \colon \T Q \longrightarrow Q$), since $\ker (\dd \operatorname{Leg}_L) = \ker \dd \pi \cap \ker \omega_L$.
\end{itemize}
\end{remark}

\begin{remark}[\textsc{Relation between both algorithms}] 
The algorithm presented is related to the one by Gotay and Nester as follows. Denote by $F_0, F_1, \dots$ the constraint submanifold obatined by requiring the SODE condition at each step. Then, it is clear that at each step $F_k \subseteq M_k$, so that $F_f \subseteq M_f$. Furthermore, $S_f \subset F_f$, as we clearly have $S_f \subseteq F_k$, for every $k$. Indeed, the inclusion $S_f \subseteq F_0$ is clear, and the subsequent ones are obtained iteratively by definition.
\end{remark}

\paragraph{Consistent Lagrangian Systems.}
If the system is consistent but has gauge ambiguities, we can bypass the constraint algorithm and apply the coisotropic embedding theorem directly to the initial, degenerate Lagrangian system $(\T Q, \omega_L)$.
%%%
This approach raises the true "tangent structure problem": does this procedure preserve the tangent structure? Specifically:
\begin{enumerate}
    \item Is the regularized symplectic manifold $\widetilde{M}$ diffeomorphic to a tangent bundle $\T \widetilde{Q}$?
    \item If so, is the new symplectic form $\widetilde{\omega}$ a \textit{regular Lagrangian 2-form} $\omega_{\widetilde{L}}$?
\end{enumerate}
In the {next section, we show that for a specific, physically relevant class of degeneracies, the answer to the first question is yes, while the answer to the second one is no, unless the coisotropic regularization scheme used in the Hamiltonian setting is slightly modified.}
%%%
{
\subsection{Existence and uniqueness of Lagrangian regularization}}

{The objective of this section is to prove the existence of an autonomous Lagrangian regularization, under specific conditions on the gauge ambiguities of $L$. We also discuss the matter of uniqueness. Although global uniqueness is not guaranteed, as a plethora of extended Lagrangians may be considered, we prove that any tangent structure on a particular symplectic regularization $\widetilde{M}$ must be ``isomorphic on $\T Q$'' to the one we build. Namely, the first-order germ of the extension is unique.
%%%
The main assumption that we will make to endow the regularization with a Lagrangian structure (as in \cite{ibortcos,ibort-marin}) is that the characteristic distribution $K \,=\, \ker \omega_L$ providing the characteristic bundle $\mathbf{K} \subset \T(\T Q)$ is the complete lift (tangent distribution) of an integrable distribution $\undertilde{K}$ on $Q$, which defines a regular foliation $\mathcal{F}_{\undertilde{K}}$.}
%%%
Let $Q$ have local coordinates $(x^a, f^A)$, where $x^a$ are coordinates on the leaves of $\mathcal{F}_{\undertilde{K}}$ and $f^A$ parameterize the fibers (the distribution $\undertilde{K}$). 
%%%
The tangent bundle $\T Q$ has coordinates $(x^a, f^A, \dot{x}^a, \dot{f}^A)$.
%%%
Under this hypothesis, the kernel of $\omega_L$ is $K = {\undertilde{K}^C}$, locally spanned by 
\be
\text{span}\left\{ \left(\frac{\de}{\de f^A}\right)^C \,=\, \frac{\partial}{\partial f^A},\, \left(\frac{\de}{\de f^A}\right)^V \,=\, \frac{\partial}{\partial {\dot{f}}^A}\right\} \,.
\ee
%%%

\noindent The symplectic thickening $(\widetilde{M}, \widetilde{\omega})$ is constructed as a neighborhood of the zero section in the dual bundle $\mathbf{K}^* \to \T Q$, as described in \cref{Thm: Coisotropic Embedding Theorem}.
%%%
As discussed in \cite{ibort-marin}, such thickening coincides with the whole $\mathbf{K}^*$ if the almost product structure $P$ can be chosen to have vanishing Nijenhuis tensor.
%%%
{We assume this is the case from now on.}
%%%

\noindent On the other hand, the thickened space $\widetilde{M} = \mathbf{K}^*$ can be identified as the cotangent bundle of the foliation $\mathcal{F}_K$ in the sense of the following proposition:
\begin{proposition}
The following canonical isomorphism exists:
\be \label{Eq: thickening=cotangent bundle foliation}
\widetilde{M} \,=\, \mathbf{K}^* \,\simeq\, \mathscr{T}^* \mathcal{F}_K \,:=\, \bigsqcup_{F \in \mathcal{F}_K} \T^* F \,,
\ee
where $F$ denotes a leaf of $\mathcal{F}_K$.
%%%
\begin{proof}
A point $p \in \mathbf{K}^*$ is, by definition, an element of the dual bundle to $\mathbf{K}$. 
%%%
It consists of a pair $(m, \alpha_m)$, where $m \in \T Q$ is the base point, and $\alpha_m \in K_m^*$ is a linear functional on the fiber $K_m = \ker(\omega_L)_m$. 
%%%
Thus, $\alpha_m: K_m \to \R$.
%%%

\noindent On the other hand, a point $q \in \mathscr{T}^* \mathcal{F}_K$ is, by its definition as a disjoint union, an element of the cotangent bundle of a leaf $F \in \mathcal{F}_K$. 
%%%
This point consists of a pair $(m, \beta_m)$, where $m \in F$ is the base point, and $\beta_m \in \T_m^* F$ is a linear functional on the tangent space to that leaf, $\T_m F$. 
%%%
Thus, $\beta_m: \T_m F \to \R$.
%%%

\noindent The foliation $\mathcal{F}_K$ is, by construction, the integral foliation of the distribution $\mathbf{K}$. 
%%%
This means that, at any point $m \in \T Q$, the tangent space to the unique leaf $F$ passing through $m$ is precisely the subspace $K_m$:
\be
\T_m F = K_m \,.
\ee
%%%%

\noindent Since the domain spaces $K_m$ and $\T_m F$ are the same vector space, their dual spaces $K_m^*$ and $\T_m^* F$ are also canonically identical.
%%%
Therefore, there is a natural, fiber-preserving isomorphism $\Phi: \mathbf{K}^* \to \mathscr{T}^* \mathcal{F}_K$ given by $\Phi(m, \alpha_m) = (m, \alpha_m)$, which simply re-interprets the covector $\alpha_m \in K_m^*$ as an element of $\T_m^* F$. 
%%%
This establishes the identity $\mathbf{K}^* \cong \mathscr{T}^* \mathcal{F}_K$.
%%%
\end{proof}
\end{proposition}

\begin{remark}
\label{remark:vanishing_Nijenhuis_tensor}
In the case where the almost product structure $P$ does not have vanishing Nijenhuis tensor, the identification of $\mathbf{K}^\ast$ as a global tangent manifold does not makes sense anymore. Indeed, the coisotropic embedding theorem forces us to restrict to a tubular neighborhood of $\T Q$ in $\mathbf{K}^\ast$ for the form to be symplectic. This neighborhood may no longer be a vector bundle, thus destroying the \emph{global} tangent structure. However, it inherits a natural tangent structure, and may be considered a tangent bundle locally.
%%%
For convenience, we restrict to the case in which $\widetilde{\omega}$ is globally symplectic, although without much difficulty all the constructions extend to the situation where one must work on an open subset (indeed, the local expression and constructions that we show work for any almost product structure). 
%%%
Moreover, if the Hamiltonian vector fields are complete (in particular, if the Hamiltonian vector field in the thickening is complete), the dynamics always remain in this open subset, as the flow maintains regularity.
\end{remark}

\noindent The coordinates of $\widetilde{M}$ are $(x^a, f^A, \dot{x}^a, \dot{f}^A, {\mu_f}_A, {\mu_{\dot{f}}}_A)$, where $({\mu_f}_A, {\mu_{\dot{f}}}_A)$ are the fiber coordinates dual to the kernel generators
\be
\left\{\,\frac{\partial}{\partial f^A}, \frac{\partial}{\partial {\dot{f}}^A} \,\right\} \,.
\ee
%%%
We now define a new configuration manifold $\widetilde{Q}$. 
%%%
We identify $\widetilde{Q}$ as the cotangent bundle of the foliation $\mathcal{F}_{\undertilde{K}}$, denoted $\widetilde{Q} := \mathscr{T}^* \mathcal{F}_{\undertilde{K}} \,\equiv\, \undertilde{\mathbf{K}}^*$, and defined as
\be
\widetilde{Q} \,=\, \mathscr{T}^*\mathcal{F}_{\undertilde{K}} \,:=\, \bigsqcup_{\undertilde{F} \in \mathcal{F}_{\undertilde{K}}} \T^* \undertilde{F} \,,
\ee
where by $\undertilde{F}$ we denote a leaf of $\mathcal{F}_{\undertilde{K}}$.
%%%
The manifold $\widetilde{Q}$ has local coordinates $(\widetilde{q}) = (x^a, f^A, {\mu}_A)$.
%%%
The tangent bundle of this new space is $\T \widetilde{Q} = \mathbf{T}(\mathscr{T}^* \mathcal{F}_{\undertilde{K}})$, with local coordinates $(\widetilde{q}, \dot{\widetilde{q}}) = (x^a, f^A, {\mu}_A, \dot{x}^a, \dot{f}^A, \dot{\mu}_A)$.
%%%

\begin{proposition}
There exists a canonical isomorphism $\alpha$ that relates $\T \widetilde{Q}$ to the thickened space $\widetilde{M} \simeq \mathscr{T}^* \mathbf{T} \mathcal{F}_{\undertilde{K}}$
\be
\alpha \colon \mathbf{T}\widetilde{Q} \to \widetilde{M}
\ee
%%%
\begin{proof}
The isomorphism is the Tulczyjew isomorphism for foliations defined in \cref{Subsec: A Tulczyjew isomorphism for foliations}
\be
\begin{split}
\alpha &\colon \T \widetilde{Q} \,=\, \T \mathscr{T}^* \mathcal{F}_{\undertilde{K}} \to \mathscr{T} \mathcal{F}_{\mathbf{T} \undertilde{K}} \,=\, \mathscr{T} \mathcal{F}_{K} \\
&:  (x^a, f^A, \mu_A, \dot{x}^a, \dot{f}^A, \dot{\mu}_A) \mapsto (x^a, f^A, \dot{x}^a, \dot{f}^A, {\mu_f}_A = {\dot{\mu}}_A, {p_{\dot{f}}}_A = \mu_A) \,.
\end{split}
\ee
\end{proof}
\end{proposition}

\noindent The isomorphism $\alpha$ is evidently differentiable, thus defining a diffeomorphism, and allows us to endow the regularized manifold $\widetilde{M}$ with the structure of a tangent bundle, by "pushing forward" the canonical tangent structure $(S_{\T\widetilde{Q}}, \Delta_{\T\widetilde{Q}})$ from $\T\widetilde{Q}$ to $\widetilde{M}$.
%%%
\noindent We define the tangent structure $(\widetilde{S}, \widetilde{\Delta})$ on $\widetilde{M}$ as:
\begin{eqnarray}
\widetilde{\Delta} &:=& \alpha_*(\Delta_{\T\widetilde{Q}}) \\
\widetilde{S} &:=& \alpha^*(S_{\T\widetilde{Q}}) \,.
\end{eqnarray}
%%%
Intrinsically, $\widetilde{\Delta}$ is the vector field on $\widetilde{M}$ that is $\alpha$-related to the canonical Liouville field $\Delta_{\T\widetilde{Q}}$. 
%%%
The tensor $\widetilde{S}$ is the unique $(1,1)$-tensor on $\widetilde{M}$ satisfying $\widetilde{S} \circ \alpha_* = \alpha_* \circ S_{\T\widetilde{Q}}$.
%%%
Since $\alpha$ is a diffeomorphism, this new structure $(\widetilde{M}, \widetilde{S}, \widetilde{\Delta})$ automatically satisfies the tangent bundle axioms (\cref{Eq: cond tangent}).
%%%

\noindent To see this structure explicitly, we compute its local form.
%%%
The canonical structure on $\T\widetilde{Q}$ (with coordinates $(x^a, f^A, \mu_A, \dot{x}^a, \dot{f}^A, \dot{\mu}_A)$) is:
\begin{eqnarray}
\Delta_{\T\widetilde{Q}} &=& \dot{x}^a \frac{\partial}{\partial \dot{x}^a} + \dot{f}^A \frac{\partial}{\partial {\dot{f}}^A} + \dot{\mu}_A \frac{\partial}{\partial \dot{\mu}_A} \,, \\
S_{\T\widetilde{Q}} &=& \dd x^a \otimes \frac{\partial}{\partial \dot{x}^a} + \dd f^A \otimes \frac{\partial}{\partial \dot{f}^A} + \dd \mu_A \otimes \frac{\partial}{\partial \dot{\mu}_A} \,.
\end{eqnarray}
%%%

At this point, a fundamental issue arises. 
%%%
Let us pull-back the standard regularized symplectic form $\widetilde{\omega}$ from $\widetilde{M}$ to $\T\widetilde{Q}$ via the Tulczyjew isomorphism $\alpha$, and check if the resulting 2-form $\widehat{\omega} := \alpha^* \widetilde{\omega}$ is Lagrangian with respect to the canonical tangent structure $S_{\T\widetilde{Q}}$.
%%%

\noindent Recall that the thickened symplectic form constructed via the standard Hamiltonian coisotropic embedding is:
\be
\widetilde{\omega} \,=\, \tau^* \omega_L + \dd \left( {\mu_f}_A P^A + {\mu_{\dot{f}}}_A R^A \right) \,,
\ee
where the 1-forms $P^A$ and $R^A$ are the 1-forms defining an almost-product structure $P$ adapted to $K$
\be
P \,=\, P^A \otimes \frac{\de}{\de f^A} + R^A \otimes \frac{\de}{\de \dot{f}^A} \,,
\ee
where
\begin{align}
P^A \,&=\, \dd f^A - P^A_a \dd x^a - P'^A_a \dd \dot{x}^a \,, \\
R^A \,&=\, \dd \dot{f}^A - R^A_a \dd x^a - R'^A_a \dd \dot{x}^a \,.
\end{align}
%%%
Applying the pull-back $\alpha^*$, we obtain:
\be
\widehat{\omega} \,=\, \tau^* \omega_L + \dd \dot{\mu}_A \wedge P^A + \dot{\mu}_A \dd P^A + \dd \mu_A \wedge R^A + \mu_A \dd R^A \,.
\ee
%%%
For $\widehat{\omega}$ to be a regular Lagrangian 2-form on $\T\widetilde{Q}$, it must satisfy the symmetry condition \eqref{Eq: symmetry}, namely $\widehat{\omega}(S_{\T\widetilde{Q}} X, Y) = \widehat{\omega}(S_{\T\widetilde{Q}} Y, X)$ for any pair of vector fields $X, Y$.
%%%
Let us test this condition using the canonical tangent structure
\be
S_{\T\widetilde{Q}} \,=\, \dd x^a \otimes \frac{\partial}{\partial \dot{x}^a} + \dd f^A \otimes \frac{\partial}{\partial \dot{f}^A} + \dd \mu_A \otimes \frac{\partial}{\partial \dot{\mu}_A} \,,
\ee
and the specific pair of coordinate vector fields $X = \frac{\partial}{\partial \mu_A}$ and $Y = \frac{\partial}{\partial f^B}$.
%%%
Applying the vertical endomorphism, we have $S_{\T\widetilde{Q}} X = \frac{\partial}{\partial \dot{\mu}_A}$ and $S_{\T\widetilde{Q}} Y = \frac{\partial}{\partial \dot{f}^B}$. 
%%%
Evaluating the left-hand side of the symmetry condition yields:
\be
\widehat{\omega}(S_{\T\widetilde{Q}} X, Y) \,=\, \widehat{\omega}\left( \frac{\partial}{\partial \dot{\mu}_A},\, \frac{\partial}{\partial f^B} \right) \,.
\ee
%%%
The only term in $\widehat{\omega}$ containing $\dd \dot{\mu}_A$ is $\dd \dot{\mu}_A \wedge P^A$. 
%%%
Since $P^A\left(\frac{\partial}{\partial f^B}\right) = \delta^A_B$, we obtain
\be
\widehat{\omega}(S_{\T\widetilde{Q}} X, Y) \,=\, \delta^A_B \,.
\ee
%%%
\noindent Conversely, evaluating the right-hand side yields
\be
\widehat{\omega}(S_{\T\widetilde{Q}} Y, X) \,=\, \widehat{\omega}\left( \frac{\partial}{\partial \dot{f}^B},\, \frac{\partial}{\partial \mu_A} \right) \,=\, - \widehat{\omega}\left( \frac{\partial}{\partial \mu_A},\, \frac{\partial}{\partial \dot{f}^B} \right) \,.
\ee
The only term in $\widehat{\omega}$ containing $\dd \mu_A$ is $\dd \mu_A \wedge R^A$. 
%%%
Since $R^A\left(\frac{\partial}{\partial \dot{f}^B}\right) = \delta^A_B$, we obtain
\be
\widehat{\omega}(S_{\T\widetilde{Q}} Y, X) \,=\, - \delta^A_B \,.
\ee
%%%
This implies that for the 2-form to be Lagrangian, we would fundamentally need $\delta^A_B = - \delta^A_B$. 
%%%
This shows that $\widehat{\omega}$ is never a Lagrangian 2-form, regardless of the choice of the almost-product structure $P$.
%%%
Therefore, the standard coisotropic embedding inherited from the Hamiltonian setting fundamentally breaks the tangent bundle geometry, making it mandatory to slightly modify the regularization scheme to preserve the Lagrangian nature of the system.
%%%

At this stage, having proved that the standard coisotropic embedding inevitably breaks the Lagrangian nature of the system with respect to the canonical tangent structure on $\T\widetilde{Q}$, there are essentially two paths to proceed:
\begin{itemize}
    \item Modifying the isomorphism: One can abandon the canonical Tulczyjew isomorphism $\alpha$ and construct a different bundle isomorphism between the thickened space $\widetilde{M}$ and the tangent bundle $\T\widetilde{Q}$. 
    %%%
    This is the approach adopted by \textit{A. Ibort} and \textit{J. Marín-Solano} in \cite{ibort-marin}, where they introduce an arbitrary Riemannian metric on the fibers of the vector bundle $\mathbf{K} \to \T Q$ to build a non-canonical, metric-dependent isomorphism that correctly "twists" the variables to get a Lagrangian 2-form.
    %%%
    \item Modifying the regularized 2-form: One can preserve the canonical, purely geometric Tulczyjew isomorphism $\alpha$ and modify the definition of the regularized 2-form itself.
\end{itemize}

\noindent In the present work, we adopt the second approach. 
%%%
Specifically, rather than relying on the standard Hamiltonian coisotropic form $\widetilde{\omega}$, we construct the regularized Lagrangian 2-form $\widehat{\omega}$ on $\T\widetilde{Q}$ by taking the pull-back of the original degenerate Lagrangian form, $\alpha^* \tau^* \omega_L$, and adding a correction term that is Lagrangian by construction. 
%%%
This term takes the form $-\dd\dd_{S_{\T\widetilde{Q}}} F$, where $F \in \mathcal{C}^\infty(\T\widetilde{Q})$ is a globally defined smooth function.
%%%
The construction of such a function $F$ requires fixing an auxiliary connection on the bundle $\widetilde{Q} \to Q$ satisfying suitable properties (which are fulfilled, for example, by any linear connection).
%%%
This new methodology presents two significant advantages over the existing literature:
\begin{itemize}
    \item It requires fixing a less restrictive geometric structure (a connection) compared to the requirement of a full Riemannian metric.
    \item It yields a \textit{globally} defined regularized Lagrangian function $\widetilde{L}$ that generates the dynamics, unlike the approach in \cite{ibort-marin} which only guarantees the existence of local Lagrangian functions.
\end{itemize}
%%%

\noindent The definition of the function $F$ is not canonical, and depends on the choice of two ingredients:
\begin{itemize}
    \item An Ehresmann connection $\nabla$ on the bundle $\widetilde{Q} \,=\, \mathbf{K}^* \longrightarrow Q$, given by a splitting of the tangent bundle in vertical and horizontal vectors
    \[
    \T \widetilde{Q}^\ast  = \mathcal{V} \oplus \mathcal{H}_\nabla\,.
    \]
    %%%
    This connection is chosen so that the splitting at $Q$ (identified as a submanifold via the zero section), is the canonical splitting 
    \[
    \T \widetilde{Q}  |_Q = \mathcal{V} \oplus \T Q\,,
    \]
   in order for $F$ to be zero at $Q$ (and hence, to define an extension of $L$). 
   %%%
   This can be achieved simply by choosing a linear connection, though it is not necessary.
   %%%
    \item An almost product structure $P$ on $Q$, which complements the distribution $\undertilde K$.
\end{itemize}

\begin{remark}[\textsc{Coordinate expressions}]
\label{remark:coordinate_expression_nabla_P_symp}
Locally, we express the components of the connection as
\[
\mathcal{H}_\nabla = \operatorname{span}\left\{ \pdv{x^a} + \Gamma_{aA} \pdv{\mu_A}\,, \pdv{f^B} + \Gamma_{BA} \pdv{\mu_A} \right\}\,.
\]
%%%
The condition on $\nabla$ inducing the canonical splitting at the zero section is reflected in the $\Gamma$'s vanishing at $Q$ (again identified via the zero section).
%%%

\noindent On the other hand, we express the projector defining the almost product structure as 
\[
P = P^A \otimes \pdv{f^A} = \left(\dd f^A - P^A_a \dd x^a \right) \otimes \pdv{f^A}\,.
\]
\end{remark}

Now, consider the following maps 
% https://q.uiver.app/#q=WzAsOCxbMSwwLCJKXjEgXFx3aWRldGlsZGUgXFxwaSJdLFswLDAsIihcXHVuZGVydGlsZGV7S31eXFx1cGFycm93KV5cXGFzdCJdLFsyLDAsIlxcbWF0aGNhbHtWfSBcXG9wbHVzIFxcbWF0aGNhbHtIfV9cXG5hYmxhID0gXFxUIFxcdW5kZXJ0aWxkZXtLfV5cXGFzdCJdLFsyLDEsIlxcbWF0aGNhbHtWfSJdLFsyLDIsIlxcdW5kZXJ0aWxkZXtLfV5cXGFzdCJdLFswLDEsIkpeMSBcXHBpIl0sWzEsMSwiXFxUIFxcbWF0aGJme1F9Il0sWzEsMiwiXFx1bmRlcnRpbGRle0t9Il0sWzAsMiwiaV97XFx3aWRldGlsZGVcXHBpfSIsMCx7InN0eWxlIjp7InRhaWwiOnsibmFtZSI6Imhvb2siLCJzaWRlIjoiYm90dG9tIn19fV0sWzIsMywicF9cXG1hdGhjYWx7Vn0iXSxbMyw0XSxbMSw1LCJcXHRhdSIsMl0sWzUsNiwiaV9cXHBpIiwyLHsic3R5bGUiOnsidGFpbCI6eyJuYW1lIjoiaG9vayIsInNpZGUiOiJ0b3AifX19XSxbNiw3LCJQIiwyXSxbMCwxLCJcXGFscGhhIiwyXV0=
\[\begin{tikzcd}[cramped]
	{\widetilde{M}} & {\T \widetilde{Q} \simeq \mathcal{V} \oplus \mathcal{H}_\nabla} \\
	{\T Q} &   {\mathcal{V}} \\
	  {\undertilde{\mathbf{K}}} &  {\undertilde{\mathbf{K}}^\ast}
	\arrow["\tau"', from=1-1, to=2-1]
	\arrow["\alpha"', from=1-2, to=1-1]
	\arrow["{p_\mathcal{V}}", from=1-2, to=2-2]
	\arrow["P"', from=2-1, to=3-1]
	\arrow[from=2-2, to=3-2]
\end{tikzcd}\,,\]
where $p_{\mathcal{V}}$ denotes the projection from $\T \widetilde{Q}$ to $\mathcal{V}$ defined by the connection $\nabla$ chosen and the arrow $\mathcal{V} \longrightarrow \undertilde{K}^\ast$ is the identification of the vertical bundle with the fiber of a vector bundle.
%%%
Then, we define the map 
\be
F^{P, \nabla} \colon \T \widetilde{Q} \longrightarrow \mathbb{R}
\ee
for $\xi \in \T \widetilde{Q}$ via the natural pairing between $\undertilde{\mathbf{K}}$ and $\undertilde{\mathbf{K}}^\ast$
\be
F^{P,\nabla}(\xi) = \langle (P \circ \tau \circ \alpha)(\xi), \,p_\mathcal{V} (\xi) \rangle\,.
\ee

\begin{remark}[\textsc{Local expression of $F^{P, \nabla}$}] Using the coordinate components of $\nabla$ and $P$ from \cref{remark:coordinate_expression_nabla_P_symp}, we have that 
\be
F^{P, \nabla} = \left( \dot{\mu}_A - \dot{x}^a \Gamma_{aA} - \dot{f}^B \Gamma_{BA}\right) \left( \dot{f}^A - \dot{x}^a P^A_a\right)\,.
\ee
\end{remark}

\noindent We then have the following:

\begin{theo}[\textsc{Lagrangian coisotropic embedding}]
\label{thm:Lagrangian_coisotropic_embedding_symplectic}
Let $L \colon \T Q \longrightarrow \mathbb{R}$ be a singular Lagrangian. 
%%%
Suppose that $L$ is consistent and that the characteristic distribution $K = \ker \omega_L$ is the complete lift of a distribution $\undertilde K$ on $Q$. 
%%%,
Then, given an Ehresmann connection $\nabla$ on $\mathbf{K}^*$ and an almost product structure $P$ on $Q$ as above, the embedding 
\be
\T Q \hookrightarrow \mathbf{K}^*
\ee
is a coisotropic embedding on a neighborhood of $\T \widetilde{Q}$ for the symplectic structure $\omega_{\widetilde L}$, where $\widetilde L = L + F^{P, \nabla}$.
\end{theo}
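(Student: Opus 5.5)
We work entirely on $\T\widetilde{Q}$ and transport statements to $\mathbf{K}^*$ via the Tulczyjew isomorphism $\alpha$. Here $L$ is regarded as a function on $\T\widetilde{Q}$ through the projection $\tau\circ\alpha$, which in coordinates forgets $(\mu_A,\dot\mu_A)$, and $\widetilde L = (\tau\circ\alpha)^*L + F^{P,\nabla}$. The plan has three steps: show that $\omega_{\widetilde L}$ is symplectic near the zero section; show that $\T Q$, embedded as the zero section $\mu_A=\dot\mu_A=0$, is coisotropic; and show that $\omega_{\widetilde L}$ pulls back to $\omega_L$ there.

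\emph{Step 1: splitting the form.} Since $\omega_{\widetilde L} = -\dd\dd_{S}\widetilde L$ is linear in $\widetilde L$, we write it as $\omega_{\widetilde L} = \alpha^*\tau^*\omega_L + \omega_F$, with $\omega_F := -\dd\dd_{S_{\T\widetilde Q}} F^{P,\nabla}$.

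For the first summand we need $-\dd\dd_{S_{\T\widetilde Q}}(\tau\circ\alpha)^*L = (\tau\circ\alpha)^*\omega_L$. In coordinates, $(\tau\circ\alpha)^*L$ depends only on $(x,f,\dot x,\dot f)$. The soldering form $S_{\T\widetilde Q}$ restricted to these variables coincides with $S_{\T Q}$, so $\dd_{S}L = \partial_{\dot x}L\,\dd x + \partial_{\dot f}L\,\dd f$, and taking $-\dd$ of this gives exactly $\omega_L$.

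For the second summand we compute the Poincaré--Cartan form of $F = \Pi_A\,\Phi^A$, where $\Pi_A := \dot\mu_A - \dot x^a\Gamma_{aA} - \dot f^B\Gamma_{BA}$ and $\Phi^A := \dot f^A - \dot x^aP^A_a$. This gives
\[
\dd_S F = \Phi^A\,\dd\mu_A + \Pi_A\,P^A + \Phi^A\bigl(-\Gamma_{aA}\dd x^a - \Gamma_{BA}\dd f^B\bigr),
\]
where $P^A = \dd f^A - P^A_a\dd x^a$.

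\emph{Step 2: evaluating at the zero section.} At $\mu=\dot\mu=0$ we have $\Gamma=0$, hence $\Pi_A = 0$. Differentiating the expression above and restricting to the zero section leaves
\[
\omega_F\big|_{\T Q} = \dd\mu_A\wedge\bigl(\dd\dot f^A - P^A_a\dd\dot x^a - \dot x^a\,\dd P^A_a\bigr) + \dd\dot\mu_A\wedge P^A + (\text{terms containing } \dd\Gamma\cdot\Phi).
\]
The terms $\dd\Gamma\cdot\Phi$ involve $\dd\mu$ and $\dd\dot\mu$ only through $\partial_\mu\Gamma$. I expect that, with the canonical splitting condition on $\nabla$ (for example, a linear connection), these terms have the form $\Phi\,\dd\mu\wedge(\dd x,\dd f)$ and do not spoil the triangular structure. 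The form obtained is essentially the Hamiltonian thickening form of \cref{Thm: Coisotropic Embedding Theorem}, built from the complete-lift projector $P^C$ of \cref{Def: Complete Lift}, but with the pairing of $(\mu,\dot\mu)$ with $(R,P)$ swapped. This swap is precisely the correction that the symmetry obstruction computed before the theorem called for.

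\emph{Step 3: nondegeneracy.} We use the adapted coframe $\{\dd x,\dd\dot x, P^A, R^A:=(P^C)^A_v, \dd\mu,\dd\dot\mu\}$. In this coframe, $\omega_{\widetilde L}|_{\T Q}$ is block triangular:
\begin{itemize}
\item $\omega_L$ is nondegenerate on the horizontal directions, because its kernel is exactly $K = \undertilde K^C = \operatorname{span}\{\partial_f,\partial_{\dot f}\}$;
\item the pairings $\dd\dot\mu_A\wedge P^A$ and $\dd\mu_A\wedge R^A$ are perfect pairings between $K$ and the fibre directions.
\end{itemize}
A determinant argument then gives nondegeneracy along $\T Q$. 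By openness of nondegeneracy, $\omega_{\widetilde L}$ is symplectic on a neighborhood of $\T Q$. Closedness is automatic, since the form is exact.

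\emph{Step 4: coisotropy.} The pullback of $\omega_{\widetilde L}$ to $\mu=\dot\mu=0$ kills every term containing $\dd\mu$ or $\dd\dot\mu$, which leaves $\omega_L$. Counting dimensions, $\dim\T\widetilde Q - \dim\T Q = 2r = \operatorname{rank}K$. For an embedding of a submanifold whose induced form has kernel of rank equal to its codimension, the embedding is coisotropic, because $(\T_m\T Q)^{\perp} = \ker(\omega_L)_m$.

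\emph{Expected obstacle.} The main difficulty is the careful bookkeeping in Step 2. We must verify that the $\Gamma$-dependent terms, and the derivatives of $\Gamma$ in the $\mu$-directions, do not destroy the triangular structure at the zero section. This is where the hypothesis that $\nabla$ induces the canonical splitting along $Q$ enters. The first thing to try is to reduce to a linear connection, for which $\Gamma_{aA} = \mu_C\Gamma^C_{aA}$, so that $\dd\Gamma$ is proportional to $\dd\mu$ times a function. One then checks that these contributions are multiplied by $\Phi$ and only modify the $\dd\mu\wedge(\dd x,\dd f)$ block, which lies off the antidiagonal pairing and therefore does not affect the determinant.
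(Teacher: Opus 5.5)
Your proposal is correct and follows essentially the paper's route. You compute $\dd_{S}\widetilde L$ in coordinates, differentiate and restrict to the zero section where the $\Gamma$'s vanish, get nondegeneracy there, extend it to a neighborhood by openness, and deduce coisotropy from the pullback being $\omega_L$; your block-triangular coframe argument is a more precise version of the paper's remark that the extra ``semi-basic'' terms do not affect regularity.

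The obstacle you flag is not one, and no reduction to a linear connection is needed. Since $\Gamma$ vanishes identically on the zero section for every admissible $\nabla$, we have $\dd\Gamma|_{\T Q}=\frac{\partial\Gamma}{\partial\mu_C}\,\dd\mu_C$. Hence all $\dd\Gamma$ contributions are of the harmless $\dd\mu\wedge(\dd x,\dd f)$ type. This includes the terms $-\bigl(\dot x^a\frac{\partial\Gamma_{aA}}{\partial\mu_C}+\dot f^B\frac{\partial\Gamma_{BA}}{\partial\mu_C}\bigr)\dd\mu_C\wedge P^A$ coming from $\dd\Pi_A\wedge P^A$, which your list omits.
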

\begin{proof} We will first show that $\T \left( \mathbf{K}^*\right)\big |_{\T Q}$ is a symplectic vector bundle, so that $\widetilde \omega$ defines a symplectic structure on some neighborhood of $\T Q$. 
%%%
Indeed, a quick computation shows that 
\begin{align}
    \dd_{S_{\T \widetilde{Q}}} \widetilde L =& \dd_S L + \dot{f}^A \dd \mu_A + \dot{\mu}_A \dd f^A\\
    &- \left(\Gamma_{aA}(\dot{f}^A - \dot{x}^b P^A_b) + P^{A}_a(\dot{\mu}_A - \dot{x}^b \Gamma_{bA} - \dot{f}^B \Gamma_{BA})\right) \dd x^a \\
    & - \left(\Gamma_{BA} (\dot{f}^A - \dot{x}^a P^A_a) + \dot{x}^a \Gamma_{aA} + \dot{f}^B \Gamma_{BA}\right)\dd f^A\\
    &- \dot{x}^a P^A_a \dd \mu_A\,.
 \end{align}
 %%%
 Hence, taking the exterior differential and restricting to the zero section (so that all $\Gamma$'s vanish), we obtain the following $2$-form
\be
\dd \dd_{S_{\T \widetilde{Q}}} \widetilde L = \tau^* \omega_L + \dd \dot{f}^A \wedge \dd  \mu_A + \dd {\dot{\mu}_A} \wedge \dd f^A + \text{(semi-basic terms)}\,.
\ee
%%%
Notice that the first three terms in the right-hand side define a symplectic structure. 
%%%
Since adding semi-basic terms (with respect to the projection onto $Q$) does not change regularity, we have that $\T \left( \mathbf{K}^*\right)\big |_{\T Q}$ is a symplectic vector bundle. 
%%%
Finally, notice that it is a coisotropic embedding, as 
\[
\dd_{S_{\T \widetilde{Q}}} L\big |_{\T Q} = \dd_S L\,.
\]
\end{proof}

Having established a constructive method for a regularized Lagrangian system, it is natural to ask to what extent this regularization depends on the specific choices made (i.e., the connection $\nabla$ and the almost-product structure $P$). 
%%%
While the global geometry of the thickened space cannot be unique—since it heavily depends on these arbitrary choices evaluated away from the zero section—its behavior infinitesimally close to the original physical system is completely rigid. 
%%%
In mathematical terms, we can prove that its \textit{first-order germ} along the original manifold $\T Q$ (that is, the regularized symplectic form and the extended tangent structure evaluated exactly on the points of $\T Q$) is geometrically unique, provided the restricted tangent structures coincide.
%%%

\noindent To prove this, we first need a purely algebraic lemma regarding symplectic vector spaces equipped with nilpotent endomorphisms (which act as local models for tangent structures).

\begin{lemma} \label{Lemma: Lagrangian complement trick}
Let $(E, \omega)$ be a symplectic vector space, equipped with a $(1,1)$-tensor $J$ satisfying $J^2 = 0$ and $\omega(Jx, y) = \omega(Jy, x)$ (which is equivalent to $i_J \omega = 0$). 
%%%
Let $L \subset E$ be a Lagrangian subspace such that $J(L) \subseteq L$. 
%%%
If there exists a complementary subspace $W$ such that $E = L \oplus W$ and $J(W) \subseteq W$, then we can construct a new complement $\mathbf{A}(W)$ which is Lagrangian and remains invariant under $J$.
\end{lemma}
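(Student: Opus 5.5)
The natural approach is the standard ``Lagrangian complement trick'': correct $W$ by the graph of a symmetric map. Since $L$ is Lagrangian, the pairing $W \to L^*$, $w \mapsto \omega(w,\cdot)|_L$ is an isomorphism. Define the bilinear form $\beta(w,w') := \omega(w,w')$ on $W$, and let $B \colon W \to L$ be the unique linear map with
\[
\omega(Bw, w') = -\tfrac{1}{2}\,\beta(w,w') \quad \text{for all } w' \in W ,
\]
which exists by the nondegenerate pairing between $L$ and $W$. Set $\mathbf{A}(W) := \{ w + Bw \mid w \in W\}$.

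First I will check that $\mathbf{A}(W)$ is a Lagrangian complement. It is complementary to $L$ because it is the graph of a map $W \to L$. It is isotropic because
\[
\omega(w+Bw, w'+Bw') = \beta(w,w') + \omega(Bw,w') - \omega(Bw',w),
\]
using $\omega(Bw,Bw')=0$ since $L$ is isotropic. This equals $\beta(w,w') - \tfrac12\beta(w,w') + \tfrac12\beta(w',w) = 0$ by skew-symmetry of $\beta$. Since $\dim \mathbf{A}(W) = \dim W = \tfrac12 \dim E$, it is Lagrangian. All of this is routine.

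The substantive step, and the expected obstacle, is $J$-invariance: I need $B\circ J|_W = J\circ B$ as maps $W \to L$. Both sides land in $L$, since $J(L)\subseteq L$, so it suffices to test against arbitrary $w'' \in W$ using nondegeneracy of the pairing. The key identity is $\omega(Jx,y) = \omega(Jy,x)$, which, combined with skew-symmetry, gives $\omega(Jx,y) = -\omega(x,Jy)$.

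For the right-hand side,
\[
\omega(JBw, w'') = -\omega(Bw, Jw'') = \tfrac12 \beta(w, Jw''),
\]
where the last step uses $Jw'' \in W$.

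For the left-hand side,
\[
\omega(BJw, w'') = -\tfrac12 \beta(Jw, w'') = -\tfrac12 \omega(Jw,w'') = \tfrac12 \omega(w, Jw'') = \tfrac12\beta(w,Jw'').
\]

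The two agree, so $J(w+Bw) = Jw + BJw \in \mathbf{A}(W)$ because $Jw\in W$. The care needed here is in tracking signs, and in making sure every pairing is taken against elements of $W$ so that the invariance $J(W)\subseteq W$ can be used.

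Finally, I will note that the hypothesis $J^2=0$ is not needed for this construction; it only matters for the later application to tangent structures. I will also note that the construction is canonical in $W$, which is what should make it usable in the first-order uniqueness argument that follows.
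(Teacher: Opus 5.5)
Your proposal is correct and follows the paper's proof essentially verbatim. Your map $B$ is exactly the paper's correction $w \mapsto l_w = -\tfrac12\phi^{-1}(i_w\omega|_W)$, and isotropy and $J$-invariance are checked by the same pairing computations, using $\omega(Jx,y)=\omega(Jy,x)$, $J(W)\subseteq W$ and nondegeneracy of the $L$--$W$ pairing. Your side remarks are also accurate: complementarity follows at once from $\mathbf{A}(W)$ being a graph over $W$, and $J^2=0$ is never used (the paper's argument does not use it either).
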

\begin{proof}
Since $L$ is a Lagrangian subspace, it is maximally isotropic, which means $\dim L = \frac{1}{2} \dim E$ and $\omega(l_1, l_2) = 0$ for any $l_1, l_2 \in L$. 
%%%
Because $W$ is a complement ($E = L \oplus W$), it immediately follows that $\dim W = \dim E - \dim L = \frac{1}{2} \dim E$. 
%%%
Furthermore, the non-degeneracy of $\omega$ ensures that the map $\phi \colon L \to W^*$ defined by $\phi(l) := (i_l \omega)|_W$ is a linear isomorphism. 
%%%

We define a deformation map $\mathbf{A} \colon W \to L \oplus W$ by adding a specific correction term in $L$ to every vector in $W$. Let $l_w := -\frac{1}{2}\phi^{-1}(i_w \omega|_W) \in L$, and define:
\be
\mathbf{A}(w) \,:=\, w + l_w \,.
\ee
Notice that $\mathbf{A}$ is injective: if $\mathbf{A}(w) = 0$, then $w = -l_w$. Since $w \in W$ and $l_w \in L$, and their intersection is trivial ($L \cap W = \{0\}$), it must be that $w = 0$. 
%%%
Since $\mathbf{A}$ is an injective linear map, the image subspace $\mathbf{A}(W)$ has exactly the same dimension as $W$, namely $\dim \mathbf{A}(W) = \frac{1}{2} \dim E$. 
%%%
Additionally, we must ensure that $\mathbf{A}(W)$ is a valid complement to $L$, meaning their intersection is trivial. 
%%%
Suppose a vector $v = w + l_w \in \mathbf{A}(W)$ also belongs to $L$. 
%%%
Since $l_w \in L$, this implies $w = v - l_w \in L$. 
%%%
However, since the original sum $E = L \oplus W$ is direct, we have $L \cap W = \{0\}$, which forces $w = 0$. 
%%%
Consequently, $l_w = 0$, meaning the only vector in the intersection is the zero vector. 
%%%
Thus, $\mathbf{A}(W)$ intersects $L$ trivially and serves as a valid complementary subspace.

Next, we explicitly show that $\mathbf{A}(W)$ is an isotropic subspace. Let $w_1, w_2 \in W$ and consider their images under $\mathbf{A}$. Expanding the symplectic form using bilinearity, we get:
\be
\begin{split}
    \omega(\mathbf{A}(w_1),\, \mathbf{A}(w_2)) \,&=\, \omega(w_1 + l_{w_1},\, w_2 + l_{w_2}) \\
    \,&=\, \omega(w_1, w_2) + \omega(w_1, l_{w_2}) + \omega(l_{w_1}, w_2) + \omega(l_{w_1}, l_{w_2}) \,.
\end{split}
\ee
The last term $\omega(l_{w_1}, l_{w_2}) = 0$, because $L$ is isotropic.
%%%

\noindent By definition of the isomorphism $\phi$, we have $\omega(l_{w_1}, w_2) = \phi(l_{w_1})(w_2)$. 
%%%
Substituting $l_{w_1}$, we get $\phi \left(-\frac{1}{2}\phi^{-1}(i_{w_1} \omega) \right)(w_2) = -\frac{1}{2}(i_{w_1} \omega)(w_2) = -\frac{1}{2}\omega(w_1, w_2)$.
%%%

\noindent With this in mind we get
\be
\omega(\mathbf{A}(w_1),\, \mathbf{A}(w_2)) \,=\, \omega(w_1, w_2) - \frac{1}{2}\omega(w_1, w_2) - \frac{1}{2}\omega(w_1, w_2) + 0 \,=\, 0 \,.
\ee
%%%
Therefore, $\mathbf{A}(W)$ is isotropic. 
%%%
Being an isotropic subspace with dimension exactly half of $\dim E$, $\mathbf{A}(W)$ is Lagrangian.
%%%

Finally, we must ensure that $\mathbf{A}(W)$ is invariant under $J$. 
%%%
Let $w \in W$. 
%%%
It will be enough to show that $J$ commutes with the correction term, i.e., $J(l_w) = l_{J(w)}$, which is equivalent to $J(\phi^{-1}(w)) = \phi^{-1}(J(w))$.
%%%
Let $l = \phi^{-1}(w)$, which by definition means $\omega(l, x) = \omega(w, x)$ for all $x \in W$. 
%%%
Since $W$ is invariant under $J$, the vector $x = J(w')$ also belongs to $W$ for any $w' \in W$. 
%%%
Substituting this into our defining equation yields $\omega(l, J(w')) = \omega(w, J(w'))$. 
%%%
Using the symmetry of $J$ with respect to $\omega$ ($i_J \omega = 0$), we can move $J$ to the first slot, obtaining $-\omega(J(l), w') = -\omega(J(w), w')$. 
%%%
Since this holds for all $w' \in W$ and the map $\phi$ is an isomorphism, it follows that $J(l) = \phi^{-1}(J(w))$, concluding the proof.
%%%
\end{proof}
%%%

\begin{remark}[\textsc{The case of symplectic vector bundles}]
{
\label{remark:complement_symplectic_vector_bundles}
Notice that \cref{Lemma: Lagrangian complement trick} applies as well to the case of symplectic vector bundles $(E, \omega) \longrightarrow M$ together with a nilpotent endomorphism $J$, an invariant Lagrangian subbundle $L$, and an invariant complement $W$. Indeed, the construction presented is global and mantains smoothness.
}
\end{remark}

\begin{remark}
\label{remark:Existence_of_J_invariant_complement}
{
Finally, notice that \cref{Lemma: Lagrangian complement trick} requires the existence of a $J$-invariant complement $W$. In the case of nilpotent endomorphisms, this may be built as follows. Let $\widetilde W$ be a complement of $J(L)$ in $J(E)$, so that $J(E) = J(L) \oplus \widetilde W$. Then, we can define $W := J^{-1}(\widetilde W)$, which clearly satisfies $\widetilde W = J(W)\subseteq W$ ($J$ being nilpotent) and $E = L\oplus W$. This holds as well in the case of symplectic vector bundles when $J$ and $J|_{L}$ have constant rank, which will certainly hold in our case.
}
\end{remark}

\noindent Using this algebraic tool, we can now prove the uniqueness theorem for the Lagrangian regularization.
%%%

\begin{theo}[\textsc{Uniqueness of Lagrangian coisotropic embedding to first order}]
\label{thm:uniqueness_sympelctic_first_order}
Let $(\widetilde{M}_1, \widetilde{\omega}_1)$ and $(\widetilde{M}_2, \widetilde{\omega}_2)$ be two symplectic regularizations of the degenerate Lagrangian system $(\T Q, \omega_L)$. 
%%%
Suppose that both regularizations admit a tangent structure $\widetilde{S}_i$ making the respective forms Lagrangian (i.e., $i_{\widetilde{S}_i} \widetilde{\omega}_i = 0$), and that the embeddings $\mathfrak{i}_i \colon \T Q \hookrightarrow \widetilde{M}_i$ preserve the tangent structure: {$(\mathfrak{i}_i)_* \circ  S_{\T Q} = \widetilde{S}_i|_{\T Q} \circ (\mathfrak{i}_i)_*$}.
%%%
Then, there exist tubular neighborhoods $U_1, U_2$ of $\T Q$ in $\widetilde M_1$ and $\widetilde{M}_2$, repsectively, together with a local diffeomorphism $\psi \colon U_1 \to U_2$ restricting to the identity on $\T Q$, such that its pushforward provides an exact isomorphism of the tangent-symplectic structures over $\T Q$:
\be
\psi_* (\widetilde{\omega}_1,\, \widetilde{S}_1)\big|_{\T Q} \,=\, (\widetilde{\omega}_2,\, \widetilde{S}_2)\big|_{\T Q} \,.
\ee
\end{theo}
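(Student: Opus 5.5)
The plan is to reduce the statement to a linear-algebra problem on the restricted bundles $E_i := \T\widetilde M_i|_{\T Q}$, and then to integrate the resulting bundle isomorphism to a diffeomorphism of tubular neighbourhoods. On each $E_i$ we have a symplectic form $\widetilde\omega_i|_{\T Q}$ and a nilpotent endomorphism $J_i := \widetilde S_i|_{\T Q}$ with $i_{J_i}\widetilde\omega_i = 0$. We also have the subbundle $\T(\T Q)\subset E_i$, which is coisotropic and invariant under $J_i$, since the embedding intertwines $S_{\T Q}$ with $\widetilde S_i$. The characteristic distribution $K = (\T(\T Q))^{\perp}$ is the same for both regularizations, namely $\ker\omega_L$.

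The first step is to split each $E_i$ into the following pieces: $\T(\T Q)$, a complement $C$ of $K$ inside $\T(\T Q)$, and a transverse piece $N_i$ dual to $K$. Concretely, I would work inside the symplectic bundle $K\oplus N_i$, in which $K$ is Lagrangian. Here I would use \cref{Lemma: Lagrangian complement trick}, together with \cref{remark:complement_symplectic_vector_bundles} and \cref{remark:Existence_of_J_invariant_complement}, to choose $J_i$-invariant symplectic complements. First I would pick a $J_i$-invariant complement $C$ of $K$ in $\T(\T Q)$, using $S_{\T Q}$ and the complete-lift structure $K=\undertilde K^C$, which is $S$-invariant. Its $\widetilde\omega_i$-orthogonal $C^{\perp_i}$ is then $J_i$-invariant, because $J_i$ is $\omega$-symmetric. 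It is also symplectic and contains $K$ as a Lagrangian, $J_i$-invariant subbundle. Next, the remark yields a $J_i$-invariant complement $W_i$ of $K$ in $C^{\perp_i}$, and the lemma upgrades it to a Lagrangian $J_i$-invariant complement $N_i := \mathbf A(W_i)$. The pairing $\widetilde\omega_i$ identifies $N_i \cong K^*$, and under this identification the restriction $J_i|_{N_i}$ corresponds to minus the transpose of $J_i|_K = S|_K$, by the symmetry $\omega(Jx,y)=\omega(Jy,x)$. The same holds for $i=2$.

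The second step is to define the bundle map $\Phi\colon E_1\to E_2$ as the identity on $\T(\T Q)$ and as the composite $N_1\cong K^*\cong N_2$ on the transverse part. By construction $\Phi^*\widetilde\omega_2=\widetilde\omega_1$: on $\T(\T Q)$ both forms restrict to $\omega_L$, the pairing between $K$ and $N_i$ is the canonical one, $N_i$ is Lagrangian, and $C\perp N_i$. We also have $\Phi\circ J_1 = J_2\circ\Phi$, because both $J_i$ agree with $S_{\T Q}$ on $\T(\T Q)$ and both act on $N_i$ by the same transpose. The one subtlety is the cross terms of $J_i$ mapping $N_i$ into $\T(\T Q)$. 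These must be eliminated, and this is exactly why $N_i$ is taken inside the $J$-invariant $C^{\perp_i}$ and is itself $J_i$-invariant.

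Finally, $\Phi$ induces an isomorphism of normal bundles $\nu_1\cong\nu_2$. Choosing tubular neighbourhood maps (exponential maps of auxiliary metrics) adapted so that their differential along $\T Q$ sends $\nu_i$ onto $N_i$, I would define $\psi := \exp_2\circ\Phi\circ\exp_1^{-1}$ on suitable neighbourhoods $U_1, U_2$. Then $\psi|_{\T Q}=\mathrm{id}$ and $\dd\psi|_{\T Q}=\Phi$. Since both $\widetilde\omega_i$ and $\widetilde S_i$ are tensors, their values on $\T Q$ depend only on the $1$-jet of $\psi$ along $\T Q$, which gives the claimed identity $\psi_*(\widetilde\omega_1,\widetilde S_1)|_{\T Q} = (\widetilde\omega_2,\widetilde S_2)|_{\T Q}$.

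The main obstacle is the first step: securing $J$-invariant complements globally and with constant rank. The point that requires care is verifying that the rank conditions of \cref{remark:Existence_of_J_invariant_complement} hold, using $\operatorname{rank}\widetilde S_i=\tfrac12\dim\widetilde M_i$ and $K = \undertilde K^C$.
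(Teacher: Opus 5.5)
Your proposal is correct and follows essentially the paper's proof. Your $C$ is the paper's $S_{\T Q}$-invariant complement $W$ of $K$, your $C^{\perp_i}$ is its $W^{\perp,\widetilde\omega_i}$, and \cref{Lemma: Lagrangian complement trick} gives the same splitting $\T\widetilde M_i|_{\T Q}\cong C\oplus K\oplus K^*$. There are two differences. First, you fix $\widetilde S_i$ on the transverse block intrinsically: block-diagonality plus $\omega(Jx,y)=\omega(Jy,x)$ forces $J_i|_{N_i}=-(S|_K)^T$, where the paper uses a coordinate computation with a general $(1,1)$-tensor. Second, you make explicit the tubular-neighbourhood construction of $\psi$, which the paper leaves implicit.

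On the step you flag as the obstacle, note that the recipe $W:=J^{-1}(\widetilde W)$ in \cref{remark:Existence_of_J_invariant_complement} does not give a complement as written. The full preimage contains $\ker J_i\supseteq J_i(K)\neq 0$, so it meets $K$. Instead take $W_i:=U\oplus J_i(U)$, with $U$ a complement of $K+\ker(J_i|_{C^{\perp_i}})$ in $C^{\perp_i}$. This is a $J_i$-invariant complement of $K$ because $J_i^2=0$, $J_i$ has half rank on $C^{\perp_i}$, and $K\cap\ker J_i=J_i(K)$ for $K=\undertilde{K}^C$.
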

\begin{proof}
{Let $n+r = \dim Q$, where} $K = \ker \omega_L$ is the characteristic distribution on $\T Q$, with rank $2r$ (because of the hypothesis that $K$ is the tangent distribution to a rank $r$ distribution $\undertilde{K}$ on $Q$). 
%%%
We can decompose the tangent space of $\T Q$ as $\T(\T Q) = K \oplus W$, where the complementary subbundle $W$ must be chosen to be invariant under the vertical endomorphism $S_{\T Q}$. 
%%%
Such a complement naturally arises from the geometry of the tangent bundle: we can choose a distribution $\undertilde{H}$ complementary to $\undertilde{K}$ on the base manifold $Q$ (so that $\T Q = \undertilde{K} \oplus \undertilde{H}$), and define $W$ as its tangent distribution. 
%%%
Since $W$ is pointwise spanned by the complete lifts $X^C$ and vertical lifts $X^V$ of vector fields $X \in \undertilde{H}$, the fundamental properties $S_{\T Q}(X^C) = X^V$ and $S_{\T Q}(X^V) = 0$ intrinsically guarantee that $S_{\T Q}(W) \subseteq W$. 
%%%
Furthermore, since $K$ is the kernel of $\omega_L$, the restriction of $\omega_L$ to $W$ is non-degenerate, making $(W, \omega_L|_W)$ a symplectic vector bundle of rank $2n$.
%%%

Inside the tangent space of the thickened manifold $\T \widetilde{M}_i|_{\T Q}$, we consider the symplectic orthogonal to $W$, defined as $W^{\perp, \widetilde{\omega}_i} = \{v \mid \widetilde{\omega}_i(v, W) = 0\}$, {which is again a symplectic vector bundle}.
%%%
Crucially, $W^{\perp, \widetilde{\omega}_i}$ is invariant under $\widetilde{S}_i$. Indeed, taking $v \in W^{\perp, \widetilde{\omega}_i}$ and testing it against $W$:
{
\be
\widetilde{\omega}_i(\widetilde{S}_i (v),\, W) \,=\, \widetilde{\omega}_i( \widetilde{S}_i (W),\, v) \,=\, \widetilde{\omega}_i( S_{\T Q} (W), \, v) \,=\, 0 \,,
\ee
}
where we used the symmetry of $\widetilde{S}_i$, the hypothesis that the embedding preserves the structure ($\widetilde{S}_i|_W = S_{\T Q}|_W$), and the fact that $S_{\T Q}$ preserves $W$.
%%%

By dimensional counting on the coisotropic embedding, $\dim \widetilde{M}_i = \dim(\T Q) + \dim K = (2n + 2r) + 2r = 2n + 4r$, where $2n$ is the dimension of $W$. 
%%%
Since $W$ is symplectic, then $\dim \widetilde{M}_i \,=\, \dim W + \dim W^{\perp, \widetilde{\omega}_i}$, implying $\dim W^{\perp, \widetilde{\omega}_i} = 4r$. 
%%%
By definition, $K \subset W^{\perp, \widetilde{\omega}_i}$ is isotropic. Since $\dim K = 2r = \frac{1}{2} \dim W^{\perp, \widetilde{\omega}_i}$, $K$ is a Lagrangian subbundle of $W^{\perp, \widetilde{\omega}_i}$.

Applying \cref{Lemma: Lagrangian complement trick} {(together with \cref{remark:complement_symplectic_vector_bundles} and \cref{remark:Existence_of_J_invariant_complement})} fiber-wise, we can construct a $J$-invariant Lagrangian complement for $K$, allowing us to identify $W^{\perp, \widetilde{\omega}_i} \cong K \oplus K^*$. 
%%%
This provides a global, structure-preserving local isomorphism, say $\psi$:
\be
\T \widetilde{M}_i\big|_{\T Q} \,\cong\, W \oplus K \oplus K^* \,.
\ee
We can therefore choose local coordinates $(x^a, \dot{x}^a)$ for $W$, $(f^A, \dot{f}^A)$ for $K$, and fiber coordinates $(\mu_A, \dot{\mu}_A)$ for $K^*$. 
%%%
In this universal adapted frame evaluated precisely on $\T Q$ (where $\mu_A = 0, \dot{\mu}_A = 0$), any regularized symplectic form must locally read:
\be
\omega \,=\, \omega_L + \dd \mu_A \wedge \dd f^A + \dd \dot{\mu}_A \wedge \dd \dot{f}^A \,.
\ee
%%%

It remains to show that the extension of the tangent structure $\widetilde{S}_i$ is also uniquely determined on $\T Q$. Let $\widehat{S}$ be an arbitrary $(1,1)$-tensor extending $S_{\T Q}$. In our universal adapted frame evaluated on $\T Q$ (where $\mu_A = 0, \dot{\mu}_A = 0$), the most general matrix form for $\widehat{S}$ that acts as $S_{\T Q}$ on the base manifold is:
\begin{align}
    \widehat{S} \,=\,& \left( \dd x^a + F^{aA} \dd \mu_A + \dot{F}^{aA} \dd \dot{\mu}_A \right) \otimes \frac{\partial}{\partial \dot{x}^a} \\
    &+ \left( \dd f^A + G^{AB} \dd \mu_B + \dot{G}^{AB} \dd \dot{\mu}_B \right) \otimes \frac{\partial}{\partial \dot{f}^A} \\
    &+ \left( H^B_A \dd \mu_B + \dot{H}^B_A \dd \dot{\mu}_B \right) \otimes \frac{\partial}{\partial \mu_A} \\
    &+ \left( I^B_A \dd \mu_B + \dot{I}^B_A \dd \dot{\mu}_B \right) \otimes \frac{\partial}{\partial \dot{\mu}_A} \,.
\end{align}

\noindent We must impose the Lagrangian condition $i_{\widehat{S}} \omega = 0$. Recall that the interior product of a $(1,1)$-tensor with a 2-form acts as $(i_{\widehat{S}}\omega)(X,Y) = \omega(\widehat{S}X, Y) + \omega(X, \widehat{S}Y)$, which extends to wedge products as $i_{\widehat{S}}(\alpha \wedge \beta) = (\widehat{S}^*\alpha) \wedge \beta + \alpha \wedge (\widehat{S}^*\beta)$. 
%%%
Applying the pull-back $\widehat{S}^*$ to the basic 1-forms yields:
\begin{align}
    \widehat{S}^*(\dd x^a) \,&=\, 0 \,, \\
    \widehat{S}^*(\dd f^A) \,&=\, 0 \,, \\
    \widehat{S}^*(\dd \dot{x}^a) \,&=\, \dd x^a + F^{aA} \dd \mu_A + \dot{F}^{aA} \dd \dot{\mu}_A \,, \\
    \widehat{S}^*(\dd \dot{f}^A) \,&=\, \dd f^A + G^{AB} \dd \mu_B + \dot{G}^{AB} \dd \dot{\mu}_B \,, \\
    \widehat{S}^*(\dd \mu_A) \,&=\, H^B_A \dd \mu_B + \dot{H}^B_A \dd \dot{\mu}_B \,, \\
    \widehat{S}^*(\dd \dot{\mu}_A) \,&=\, I^B_A \dd \mu_B + \dot{I}^B_A \dd \dot{\mu}_B \,.
\end{align}

We now evaluate $i_{\widehat{S}}$ term by term on the symplectic form $\omega = \omega_L + \dd \mu_A \wedge \dd f^A + \dd \dot{\mu}_A \wedge \dd \dot{f}^A$.
%%%
For the base form $\omega_L$, since its kernel is exactly $K = \operatorname{span}\{\frac{\partial}{\partial f^A}, \frac{\partial}{\partial \dot{f}^A}\}$, it only contracts non-trivially with coordinates $(x, \dot{x})$. 
%%%
Because $\omega_L$ is already Lagrangian with respect to the base tangent structure ($i_{S_{\T Q}}\omega_L = 0$), applying $i_{\widehat{S}}$ only extracts the newly added transverse coefficients
\be
i_{\widehat{S}} \omega_L \,=\, \left( F^{aA} \dd \mu_A + \dot{F}^{aA} \dd \dot{\mu}_A \right) \wedge i_{\frac{\partial}{\partial \dot{x}^a}} \omega_L \,.
\ee
For the second term, applying the product rule and using $\widehat{S}^*(\dd f^A) = 0$ one gets
\be
i_{\widehat{S}}(\dd \mu_A \wedge \dd f^A) \,=\, \left( H^B_A \dd \mu_B + \dot{H}^B_A \dd \dot{\mu}_B \right) \wedge \dd f^A \,,
\ee
whereas, for the third term one obtains
\be
i_{\widehat{S}}(\dd \dot{\mu}_A \wedge \dd \dot{f}^A) \,=\, \left( I^B_A \dd \mu_B + \dot{I}^B_A \dd \dot{\mu}_B \right) \wedge \dd \dot{f}^A + \dd \dot{\mu}_A \wedge \left( \dd f^A + G^{AB} \dd \mu_B + \dot{G}^{AB} \dd \dot{\mu}_B \right) \,.
\ee
%%%

\noindent Now, the form $\dd \dot{f}^A$ only appears in the third piece, multiplied by the $I$ matrices. 
%%%
Since it is linearly independent from all other forms, its coefficients must vanish, forcing $I^B_A = 0$ and $\dot{I}^B_A = 0$.
%%%

\noindent Gathering the $\dd f^A$ wedges from the second and third pieces, we get $\left( H^B_A \dd \mu_B + \dot{H}^B_A \dd \dot{\mu}_B + \dd \dot{\mu}_A \right) \wedge \dd f^A = 0$, forcing $H^B_A = 0$ and $\dot{H}^B_A = -\delta^B_A$.
%%%

\noindent The 1-forms $i_{\frac{\partial}{\partial \dot{x}^a}} \omega_L$ are linear combinations of $\dd x^b$. 
%%%
Since $\dd x^b$ do not appear anywhere else in the expansion, the $F$ matrices multiplying them must vanish, forcing $F^{aA} = 0$ and $\dot{F}^{aA} = 0$.
%%%

\noindent We are left with only $\dd \dot{\mu}_A \wedge (G^{AB} \dd \mu_B + \dot{G}^{AB} \dd \dot{\mu}_B) = 0$ from the third term. 
%%%
The linear independence of $\dd \dot{\mu}_A \wedge \dd \mu_B$ forces $G^{AB} = 0$. 
%%%
For the second part, $\dot{G}^{AB} \dd \dot{\mu}_A \wedge \dd \dot{\mu}_B = 0$ implies that the matrix $\dot{G}^{AB}$ must be symmetric ($\dot{G}^{AB} = \dot{G}^{BA}$).
%%%

\noindent Finally, we impose the structural condition established by \cref{Lemma: Lagrangian complement trick}, namely that the extended tangent structure must leave the dual Lagrangian complement invariant: $\widehat{S}(K^*) \subseteq K^*$. 
%%%
The subspace $K^*$ is generated by $\{\frac{\partial}{\partial \mu_C}, \frac{\partial}{\partial \dot{\mu}_C}\}$. 
%%%
Applying our simplified tensor $\widehat{S}$ to the basis vector $\frac{\partial}{\partial \dot{\mu}_C}$, we obtain:
\be
\widehat{S}\left( \frac{\partial}{\partial \dot{\mu}_C} \right) \,=\, \dot{G}^{CA} \frac{\partial}{\partial \dot{f}^A} - \frac{\partial}{\partial \mu_C} \,.
\ee
For this resulting vector to remain within $K^*$, it cannot possess any component along $\frac{\partial}{\partial \dot{f}^A}$, which belongs to $K$. This geometrically forces the symmetric matrix $\dot{G}^{CA}$ to be strictly zero.

Thus, all unknown coefficients are strictly identically zero or uniquely fixed. 
%%%
Consequently, there is only one algebraically permissible tensor $\widehat{S}$ on $\T Q$. 
%%%
Therefore, the map $\psi$ identifying the two universal splittings satisfies $\psi_* \widetilde{S}_1 = \widetilde{S}_2$ along $\T Q$, concluding the proof.
%%%
\end{proof}

\section{Regularization of non-autonomous systems}
\label{Sec:Regularization of non-autonomous systems}
\subsection{Cosymplectic Hamiltonian systems}

While the natural geometric setting for autonomous Hamiltonian systems is that of symplectic geometry, for non-autonomous systems is that of cosymplectic geometry (see \cite{paulette,libermann}).
%%%

{
\begin{definition}[\textsc{Cosymplectic manifold}]
\label{Def: Cosymplectic manifold}
A \textbf{cosymplectic manifold} is a triple $(M, \omega, \tau)$ where $M$ is a smooth manifold of dimension $2n+1$, $\omega \in \Omega^2(M)$ is a closed 2-form, and $\tau \in \Omega^1(M)$ is a closed 1-form, satisfying the non-degeneracy condition $\tau \wedge \omega^n \neq 0$.
\end{definition}
\begin{theo}[\textsc{Darboux's Theorem for cosymplectic manifolds}]
\label{Thm: Darboux cosymplectic}
The Darboux theorem can be generalized to cosymplectic manifolds. Let $(M, \omega, \tau)$ be a cosymplectic manifold of dimension $2n+1$. Around every point $m \in M$, there exist local coordinates $(q^i, p_i, t)$, called \textbf{Darboux coordinates}, such that:
$$
\omega = \dd q^i \wedge \dd p_i \,, \quad \tau = \dd t \,.
$$
\end{theo}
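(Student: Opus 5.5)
The plan is to reduce the statement to the classical Darboux theorem (\cref{Thm: Darboux symplectic}) by splitting off the direction of $\tau$. Since $\tau$ is closed, the Poincaré lemma gives, on a contractible neighbourhood $U$ of $m$, a function $t$ with $\tau = \dd t$ and $t(m)=0$. The condition $\tau \wedge \omega^n \neq 0$ forces $\dd t \neq 0$, so the level sets $N_c = t^{-1}(c)$ near $m$ are hypersurfaces of dimension $2n$. The same condition also gives $\omega^n|_{\ker \tau} \neq 0$, so the restriction $\omega_c := \omega|_{N_c}$ is a closed, nondegenerate $2$-form, and each slice $(N_c,\omega_c)$ is symplectic. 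In addition, $\ker \omega$ has rank one and is transverse to $\ker \tau$. This yields the Reeb vector field $R$, uniquely determined by $i_R \omega = 0$ and $i_R \tau = 1$.

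Next I use $R$ to trivialize $U$ as a product $N_0 \times (-\epsilon,\epsilon)$. Let $\phi_s$ be the local flow of $R$ and set $\Psi(y,s) = \phi_s(y)$ for $y \in N_0$. Since $\mathcal{L}_R t = \tau(R) = 1$, we have $t\circ\Psi = s$, and $\Psi$ is a local diffeomorphism because $R$ is transverse to $N_0$. The key invariance is
\[
\mathcal{L}_R \omega = \dd i_R \omega + i_R \dd\omega = 0 .
\]
Hence $\phi_s^\ast \omega = \omega$, and $\Psi^\ast\omega$ has no $\dd s$ component (because $i_{\partial_s}\Psi^\ast\omega = \Psi^\ast(i_R\omega)=0$). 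Its restriction to each slice is $\omega_0$ transported by the flow. It follows that $\Psi^\ast \omega = \mathrm{pr}_1^\ast \omega_0$ and $\Psi^\ast\tau = \dd s$.

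Finally, I apply \cref{Thm: Darboux symplectic} to the symplectic manifold $(N_0,\omega_0)$ at $m$, which gives coordinates $(q^i,p_i)$ on $N_0$ with $\omega_0 = \dd q^i\wedge \dd p_i$. Pulling these back along $\mathrm{pr}_1\circ\Psi^{-1}$ and taking $t=s$ produces coordinates $(q^i,p_i,t)$ on a neighbourhood of $m$ in which $\omega = \dd q^i \wedge \dd p_i$ and $\tau = \dd t$.

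The main obstacle is the middle step: showing that $\omega$ is invariant along the Reeb flow and has no $\dd s$ leg. This is precisely where the closedness of \emph{both} forms enters; the closedness of $\tau$ is what makes the slices $N_c$ and the function $t$ available. The remaining work is linear algebra at a point: checking that $\tau\wedge\omega^n\neq 0$ yields a rank-one kernel of $\omega$ that is transverse to $\ker\tau$, which I would verify in a basis adapted to the splitting $\ker\tau \oplus \ker\omega$.
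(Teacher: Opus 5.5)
The paper gives no proof of this statement. It is quoted as a standard fact of cosymplectic geometry, and used only implicitly later, when coordinates adapted to $(\omega,\tau)$ are chosen by means of the Reeb field. So there is no argument in the paper to compare yours against.

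Your proof is correct and complete, and it is the standard Reeb-flow argument. The steps you flagged do check out:
\begin{itemize}
\item $\tau\wedge\omega^n\neq 0$ is equivalent to $\omega^n|_{\ker\tau}\neq 0$. This gives both the symplectic slice $(N_0,\omega_0)$ and a one-dimensional $\ker\omega$ meeting $\ker\tau$ trivially, since the rank of $\omega$ is even and $\omega^n\neq 0$.
\item The identity $i_{\partial_s}\Psi^\ast\omega=\Psi^\ast(i_R\omega)$ is legitimate because $\partial_s$ is $\Psi$-related to $R$.
\item $\mathcal{L}_R t = 1$ and $\mathcal{L}_R\omega=0$ then give $\Psi^\ast\tau=\dd s$ and $\Psi^\ast\omega=\mathrm{pr}_1^\ast\omega_0$.
\end{itemize}

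The only thing left implicit is the choice of domains. Take $V\subset N_0$ inside the symplectic Darboux chart, and $\epsilon>0$ small enough that $\phi_s(V)\subset U$ for $|s|<\epsilon$. Then $t\circ\Psi=s$, together with injectivity of each $\phi_s$, makes $\Psi$ injective on $V\times(-\epsilon,\epsilon)$. Moreover $\phi_s^\ast\tau=\tau$ and $(\phi_s)_\ast R=R$, so $\dd\Psi$ is invertible at every point of $V\times(-\epsilon,\epsilon)$, not just at $(m,0)$. Hence $\Psi$ is a diffeomorphism onto an open neighbourhood of $m$, and your coordinates are defined on all of it.
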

\begin{definition}[\textsc{Cosymplectic Hamiltonian system}]
\label{Def: Cosymplectic Hamiltonian system}
A \textbf{cosymplectic Hamiltonian system} is a tuple $(M, \omega, \tau, H)$, where $(M, \omega, \tau)$ is a cosymplectic manifold and $H \in C^\infty(M)$ is a smooth function (the \textit{Hamiltonian}).
\end{definition}
}

{\noindent Similar to the symplectic case, the isomorphism $\flat$ guarantees the existence of a unique vector field ${\rm grad} \; H \in \mathfrak{X}(M)$, the \textit{gradient vector field}, satisfying:
\be
\flat({\rm grad} \; H) \,=\, \dd H \,.
\ee
%%%
To obtain the dynamics defined by the Hamiltonian (see \cite{franslacomba}), we use ${\rm grad} \; H$ to define two additional vector fields. First, the \textit{Hamiltonian vector field}:
\be
X_H \,=\, {\rm grad} \; H - R(H) R \,,
\ee
and second, the \textit{evolution vector field}:
\be \label{Eq: Evolution vector field}
{\mathcal E}_H \,=\, X_H + {\mathcal R} \,.
\ee
%%%
In Darboux coordinates $(q^i, p_i, t)$, using the local expression for ${\rm grad} \; H$, the evolution vector field takes the form:
\be \label{Eq: hcosymp2}
{\mathcal E}_H \,=\, \frac{\partial H}{\partial p_i} \frac{\partial}{\partial q^i} - \frac{\partial H}{\partial q^i} \frac{\partial}{\partial p_i} + \frac{\partial}{\partial t} \,.
\ee
%%%
The solutions of the non-autonomous Hamiltonian system are the integral curves $(q^i(\varepsilon), p_i(\varepsilon), t(\varepsilon))$ of ${\mathcal E}_H$, satisfying the time-dependent Hamilton's equations:
\be \label{Eq: hsympl3}
\frac{\dd q^i}{\dd \varepsilon} \,=\, \frac{\partial H}{\partial p_i} \,, \quad \frac{\dd p_i}{\dd \varepsilon} \,=\, - \frac{\partial H}{\partial q^i} \,, \quad \frac{\dd t}{\dd \varepsilon} \,=\, 1 \,.
\ee
Since $\frac{\dd t}{\dd \varepsilon} = 1$, we have $t = \varepsilon + \text{const}$, allowing us to identify the curve parameter with the time coordinate $t$ and recover the standard non-autonomous Hamilton's equations.}

\noindent A particular case of high relevance is obtained by taking the product
$\T^*Q \times \mathbb{R}$, where $\T^*Q$ denotes the cotangent bundle of the configuration manifold $Q$. Indeed, if we denote by $\omega = - \dd \theta_Q$, where $\theta_Q$ is the Liouville 1-form on $\T^*Q$, then the pair $(\omega, \dd t)$ defines a cosymplectic structure on $\T^*Q \times \mathbb{R}$, where here $\omega$ and $\dd t$ are the obvious extensions, $t$ being the standard coordinate in $\mathbb{R}$. A direct calculation shows that the natural bundle coordinates $(t, q^i, p_i)$ are Darboux coordinates for this cosymplectic manifold.

\begin{remark}[\textsc{Cosymplectic dynamics are Reeb dynamics}]
\label{remark:cosymplectic_dynamics_Reeb_dynamics}
\noindent Assume that $H$ is a Hamiltonian function on a cosymplectic manifold $(M, \omega, \tau)$. Then, we can construct an additional cosymplectic structure depending on $H$, say
$$
(\omega_H = \omega + \dd H \wedge \tau, \tau)\,.
$$
\noindent A simple computation shows that the evolution vector field for $H$ with respect to $(\omega, \tau)$ coincides with the Reeb vector field for
$(\omega_H, \tau)$, ${\cal E}_H = {\cal R}_H$, so that autonomous Hamiltonian dynamics may be studied as Reeb dynamics. This point of view will be particularly useful in the Lagrangian setting.
\end{remark}

\noindent The point of view of \cref{remark:cosymplectic_dynamics_Reeb_dynamics} is the picture that we will adhere to onwards, as it is the most natural setting to study the non-autonomous Lagrangian side {(see \cite{dLR1989, Krupkova-GeometryOVE-1997} for a comprhensive account on the Lagrangian description of time-dependent mechanics in terms of jet bundles, and \cite{manolo1,manolo2} for the singular case)}.

\subsection{Coisotropic regularization of pre-cosymplectic Hamiltonian systems}

{
In general, when working with singular time-dependent theories (such as time-dependent Hamiltonian gauge theories and time-dependent singular Lagrangian theories), we work on a pre-cosymplectic manifold rather than on a cosymplectic one.
\begin{definition}[\textsc{Pre-cosymplectic Hamiltonian system}]
\label{Def: Pre-cosymplectic Hamiltonian system}
A \textbf{pre-cosymplectic Hamiltonian system} can be fundamentally understood through its Reeb dynamics (as per \cref{remark:cosymplectic_dynamics_Reeb_dynamics}). Let $(M, \omega, \tau)$ be a pre-cosymplectic manifold. The dynamics are formally governed by the equations:
\be \label{Eq: Pre-cosymplectic Reeb eq}
i_X \omega = 0 \,, \quad \text{and} \quad i_X \tau = 1 \,.
\ee
%%%
As in the autonomous case, these equations pose two distinct problems:
\begin{enumerate}
    \item \textbf{Existence:} A vector field $X$ satisfying both equations may not exist.
    \item \textbf{Uniqueness:} If a solution $X$ exists, it is not unique, as it is defined only up to the addition of any vector field $Y \in \mathcal{V} := \ker \omega \cap \ker \tau$.
\end{enumerate}
\end{definition}
\noindent These two problems identify, again, two classes of pre-cosymplectic systems:
\paragraph{Inconsistent Hamiltonian Systems.}
A system is \textit{inconsistent} if the existence condition fails. In this case, a cosymplectic generalization of the pre-symplectic constraint algorithm \cite{chinea} can be used to find the submanifold where consistent dynamics can be defined. \\
%%%
\noindent The algorithm proceeds iteratively too. We define $M_0 := M$ and define the first constraint manifold $M_1$ as the locus of points where the equations are compatible with tangency conditions:
\be
M_1 := \{ m \in M_0 \mid \exists X \in \T_m M_0 \text{ with } (i_X \omega)_m = 0 \text{ and } \tau_m(X) = 1 \} \,.
\ee
%%%
Assuming $M_1$ is a smooth submanifold, the algorithm imposes solutions of \eqref{Eq: Pre-cosymplectic Reeb eq} on $M_1$ to be tangent to $M_1$. Eventually, at each step $k \ge 1$, one finds the submanifold $M_{k+1} \subset M_k$:
\be
M_{k} := \{ m \in M_{k-1} \mid \exists X \in \T_m M_{k-1} \text{ with } (i_X \omega)_m = 0 \text{ and } \tau_m(X) = 1 \} \,.
\ee
%%%
\noindent As for the pre-symplectic case, if the algorithm stabilizes, we denote by $\omega_f$ and $\tau_f$ the restrictions of $\omega$ and $\tau$ to $M_f$. 
%%%
Then, $(M_f, \omega_f, \tau_f)$ is a consistent pre-cosymplectic manifold which has Reeb dynamics defined.
\paragraph{Consistent Hamiltonian Systems.}
A system is consistent if it admits global Reeb dynamics. 
%%%
This corresponds to a system that either started consistent or is the result $(M_f, \omega_f, \tau_f)$ of applying the constraint algorithm. \\
%%%
\noindent As for the pre-symplectic case, the dynamics can still be modified by any vector field $Y$ taking values in the characteristic distribution $\mathcal{V} = \ker \omega_f \cap \ker \tau_f$ and one can regularize the system using a cosymplectic version of the coisotropic embedding theorem \cite{zoo}. 
%%%
}

\begin{remark}[\textsc{Construction of the cosymplectic thickening}]
\label{Rem: Construction of cosymplectic thickening}
Let $\mathcal{V} := \ker \omega \cap \ker \tau$. Since it is the intersection of the kernels of two closed forms, it is integrable in the sense of Frobenius. Let $(x^a, f^A)$ denote adapted coordinates to the foliation. {Since, by construction, there is a Reeb vector field}, we may further specify coordinates adapted to the pair $(\omega, \tau)$ obtaining coordinates $(q^a, p_a, f^A, t)$ such that 
\be
\omega = \dd q^a \wedge \dd p_a \,, \qquad \tau = \dd t \,.
\ee
Then, the regularized structure is defined on the bundle $\mathcal{V}^\ast \to M$, and is defined in general using an almost-product structure complementing the distribution $\mathcal{V}$. In general, this will be defined by a projector
\be
P = P^A \otimes \frac{\partial}{\partial f^A} =  \left(\dd f^A - P^A_a \dd q^a - P^{Aa} \dd p_a - Q^A \dd t \right) \otimes \frac{\partial}{\partial f^A} \,.
\ee
This projector, as in the symplectic case, induces an embedding 
\be
\mathfrak{j}^P \colon \mathcal{V}^\ast \to \T^\ast M\,, \qquad \theta \mapsto P^\ast \theta \,.
\ee
Then, if $\theta_M$ denotes the tautological $1$-form on $\T^\ast M$, we define $\vartheta^P:= (\mathfrak{j}^P)^\ast \theta_M$ and let
\be
\widetilde{\omega} = \pi^*\omega + \dd \vartheta^P \,, \qquad \widetilde{\tau} = \pi^\ast \tau \,,
\ee
where $\pi \colon \mathcal{V}^\ast \to M$ denotes the projection. Locally, employing coordinates $(q^a, p_a, t, f^A, \mu_A)$ on $\mathcal{V}^\ast$, this reads as 
\be
\widetilde{\omega} = \dd q^a \wedge \dd p_a + \dd \mu_A \wedge P^A + \mu_A \dd P^A \,, \qquad \widetilde{\tau} = \dd t \,.
\ee
\end{remark}
However, unlike the symplectic case, this embedding is not unique.
%%%
Recently, the authors studied all possible coisotropic embeddings, and uniqueness is guaranteed by a choice of Reeb vector field tangent to the final constraint and the orbits of the Reeb vector field on the extension. More particularly:
\begin{theo}[\textsc{Uniqueness of pre-cosymplectic coisotropic embedding}]\cite{zoo}
\label{thm:uniqueness_coisotropic_precosymplectic}
Let $(M, \omega, \tau)$ be a pre-cosymplectic manifold of constant rank and $\mathfrak{i}_j \colon M \hookrightarrow \widetilde{M}_j$ be coisotropic embeddings into cosymplectic manifolds $(\widetilde{M}_j, \widetilde{\omega}_j, \widetilde{\tau}_j)$, for $j = 1,2$. Then:
\begin{itemize}
    \item There are neighborhoods $U_1$ and $U_2$ of $M$ in $\widetilde{M}_1$ and $\widetilde{M}_2$ and a diffeomorphism $\psi \colon U_1 \to U_2$.
    \item Furthermore, if the Reeb vector fields $R_1$ and $R_2$ of $\widetilde{M}_1$ and $\widetilde{M}_2$ (which are tangent to $M$) coincide on $M$, we have that the diffeomorphism $\psi$ can be chosen such that its pushforward
    \be
    \psi_\ast \colon \T\widetilde{M}_1|_M \longrightarrow \T\widetilde{M}_2|_M
    \ee
    defines an isomorphism of cosymplectic vector bundles.
    \item Finally, if there is a diffeomorphism $\psi_0 \colon U_1 \to U_2$ that is the identity on $M$ and satisfies $(\psi_0)_\ast R_1 = R_2$ ({namely, both Reeb vector fields have the same orbits}), $\psi$ can be chosen to be a cosymplectomorphism.
\end{itemize}
\end{theo}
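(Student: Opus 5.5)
The plan is to follow the scheme of Gotay's proof of \cref{Thm: Coisotropic Embedding Theorem}, adapted to the pair $(\widetilde{\omega}_j,\widetilde{\tau}_j)$, in three steps matching the three bullets. Let $\mathcal{V}=\ker\omega\cap\ker\tau$.

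\emph{Step 1 (normal bundle).} First show $\dim\widetilde{M}_j=\dim M+\operatorname{rank}\mathcal{V}$, using coisotropy and non-degeneracy of $\widetilde{\tau}_j\wedge\widetilde{\omega}_j^n$. Then show that the normal bundle $N_j=\T\widetilde{M}_j|_M/\T M$ is canonically isomorphic to $\mathcal{V}^\ast$ via $[v]\mapsto \widetilde{\omega}_j(v,\cdot)|_{\mathcal{V}}$. This map is well defined because $\mathcal{V}\subset(\T M)^{\perp}$ by coisotropy. It is injective because a vector $\widetilde{\omega}_j$-orthogonal to $\mathcal{V}$ and transverse to $M$ would contradict non-degeneracy. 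Choosing tubular neighborhoods $N_j\supset U_j'\to U_j$ and composing $N_1\cong\mathcal{V}^\ast\cong N_2$ gives the diffeomorphism $\psi\colon U_1\to U_2$, equal to the identity on $M$. This proves the first bullet; at this stage $\psi$ carries no structure.

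\emph{Step 2 (linear normal form along $M$).} Since $i_{R_j}\widetilde{\omega}_j=0$, the Reeb field $R_j$ is $\widetilde{\omega}_j$-orthogonal to everything. I expect the cosymplectic coisotropy to force $R_j|_M\in\T M$, and I will check this first. Along $M$, split $\T\widetilde{M}_j|_M=\langle R_j\rangle\oplus\ker\widetilde{\tau}_j$. On $\ker\widetilde{\tau}_j$ the form $\widetilde{\omega}_j$ is symplectic, and $\T M\cap\ker\widetilde{\tau}_j=\ker\tau$ is a coisotropic subbundle with characteristic bundle $\mathcal{V}$. Choose a complement $W$ of $\mathcal{V}$ in $\ker\tau$; it is common to both embeddings and symplectic for $\omega$. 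Its orthogonal $W^{\perp}$ in $\ker\widetilde{\tau}_j$ is symplectic of rank $2\operatorname{rank}\mathcal{V}$, and $\mathcal{V}$ is Lagrangian in it. Choosing a Lagrangian complement identifies it with $\mathcal{V}^\ast$, exactly as in the linear step of \cref{thm:uniqueness_sympelctic_first_order}, but without the tangent structure. This yields
\[
\T\widetilde{M}_j|_M\cong\langle R_j\rangle\oplus W\oplus\mathcal{V}\oplus\mathcal{V}^\ast,
\]
with $\widetilde{\omega}_j=\omega|_W+\langle\cdot,\cdot\rangle_{\mathcal{V}\oplus\mathcal{V}^\ast}$ and $\widetilde{\tau}_j=R_j^\flat$.

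The two normal forms are matched by a bundle map that is the identity on $\T M$ precisely when $R_1=R_2$ on $M$. Otherwise the Reeb line inside $\T M$, which is fixed only modulo $\mathcal{V}$, differs between the two. Using a tubular neighborhood map with prescribed normal derivative, modify $\psi$ so that $\psi_\ast$ realizes this bundle isomorphism. This gives the second bullet.

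\emph{Step 3 (Moser argument).} Now assume a $\psi_0$ that is the identity on $M$ with $(\psi_0)_\ast R_1=R_2$, and, after Step 2, with $\psi_0^\ast(\widetilde{\omega}_2,\widetilde{\tau}_2)=(\widetilde{\omega}_1,\widetilde{\tau}_1)$ along $M$. Set $\sigma=\psi_0^\ast\widetilde{\omega}_2-\widetilde{\omega}_1$ and $\eta=\psi_0^\ast\widetilde{\tau}_2-\widetilde{\tau}_1$. Both are closed and vanish on $\T\widetilde{M}_1|_M$. By the relative Poincaré lemma, $\sigma=\dd\beta$ and $\eta=\dd g$, with $\beta$ and $g$ vanishing to second order on $M$. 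Consider the path $(\omega_t,\tau_t)=(\widetilde{\omega}_1+t\sigma,\widetilde{\tau}_1+t\eta)$, which is cosymplectic near $M$, and look for a time-dependent $X_t$ vanishing on $M$ with
\[
i_{X_t}\omega_t=-\beta+h_t\tau_t,\qquad \tau_t(X_t)=-g.
\]
The main obstacle is solvability: the first equation forces $\beta(R_t)=h_t$, so the issue is controlling how the Reeb field $R_t$ moves along the path. I will use the hypothesis $(\psi_0)_\ast R_1=R_2$ to see that $R_1$ is the Reeb field of both endpoints. Then I will build the primitives $\beta$ and $g$ with the homotopy operator of an $R_1$-invariant tubular retraction, transporting a slice transverse to the Reeb orbits by the flow of $R_1$. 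This makes $i_{R_1}\beta=0$ and $R_1(g)=0$, so $R_t=R_1$ for all $t$ and $h_t=0$. If this succeeds, the time-one flow composed with $\psi_0$ is the required cosymplectomorphism. If the $R_1$-invariant retraction fails to exist globally, I will fall back to shrinking the neighborhoods to flow boxes of $R_1$ and patching.
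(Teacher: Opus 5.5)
The paper does not prove this statement; it is quoted from \cite{zoo}. So I am judging your argument on its own. Steps 1 and 2 are sound and give the first two bullets. One repair is needed in Step 1: $\widetilde\omega_j$ is degenerate, so injectivity of $[v]\mapsto\widetilde\omega_j(v,\cdot)|_{\mathcal V}$ should be shown by counting. Since $\mathcal V\cap\langle R_j\rangle=0$, the $\widetilde\omega_j$-orthogonal of $\mathcal V$ has rank $\dim\widetilde M_j-\operatorname{rank}\mathcal V=\dim M$. It contains $\T M$, hence equals it.

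The genuine gap is in Step 3, and it is not a technicality. You take a single $\psi_0$ that both intertwines the Reeb fields everywhere and realizes the first-order normalization of Step 2. A correction that is the identity on $M$ and preserves $R_1$ has derivative along $M$ commuting with the linearized flow of $R_1$. So $W$, the Lagrangian complements and the tubular map would all have to be chosen $R_1$-invariant. The Moser step also needs a primitive $\beta$ of $\sigma$, vanishing on $M$, with $i_{R_1}\beta=0$. Any such $\beta$ satisfies $\mathcal L_{R_1}\beta=i_{R_1}\sigma=0$, so you are really asking that $[\sigma]=0$ in the basic cohomology of the Reeb foliation relative to $M$. Transporting a slice by the flow produces these invariant data only when the Reeb flow on $M$ has a global cross-section. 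An example is $\tau=\dd t$ exact with $R_1|_M$ complete, the jet-bundle situation of the paper, where your construction can be made to work. The flow-box fallback cannot succeed. Gluing local primitives with a partition of unity $\{\rho_\alpha\}$ adds the error $\sum_\alpha\dd\rho_\alpha\wedge\beta_\alpha$ unless the $\rho_\alpha$ are $R_1$-invariant, which is impossible when orbits recur.

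The obstruction is real, and the third bullet fails as stated. Take $M=T^2$ with $\omega=0$ and $\tau=\dd\theta_1$. On $T^2\times(-\epsilon,\epsilon)$ set $\widetilde\tau=\dd\theta_1$, $\widetilde\omega_1=\dd\mu\wedge(\dd\theta_2+2\mu\,\dd\theta_1)$ and $\widetilde\omega_2=(1+\mu)\widetilde\omega_1$.

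\emph{The hypotheses hold.} Both are coisotropic thickenings of $M$ and agree on $\T\widetilde M|_M$. They have the same Reeb field $R=\partial_{\theta_1}-2\mu\,\partial_{\theta_2}$ everywhere, so $\psi_0=\mathrm{id}$ meets all your Step 3 assumptions.

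\emph{No cosymplectomorphism exists.} A cosymplectomorphism that is the identity on $M$ commutes with $R$ and acts trivially on $H_1$ of the nearby tori. It therefore preserves the rotation vector $(1,-2c)$, hence each torus $\{\mu=c\}$. It must then preserve the flux through the annulus $\{\theta_1=0,\ 0\le\mu\le c\}$. That flux is $2\pi c$ for $\widetilde\omega_1$ but $2\pi(c+c^2/2)$ for $\widetilde\omega_2$, a contradiction for $c\neq0$.

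\emph{No basic primitive exists.} Take the primitives $\lambda_1=\mu\,\dd\theta_2+\mu^2\,\dd\theta_1$ and $\lambda_2=(\mu+\tfrac{\mu^2}{2})\,\dd\theta_2+(\mu^2+\tfrac{2}{3}\mu^3)\,\dd\theta_1$, both vanishing on $M$. A closed Reeb orbit $\gamma$ of class $(q,-2cq)$ on $\{\mu=c\}$ gives $\oint_\gamma(\lambda_2-\lambda_1)=-\tfrac{2\pi}{3}qc^3\neq0$. Hence $\sigma$ has no primitive killing $R$.

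The third bullet therefore needs an additional hypothesis, such as a global cross-section for the Reeb flow, and your Step 3 should be carried out under it.
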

\begin{remark}
In short, coisotropic embeddings of a pre-cosymplectic manifold $(M, \omega, \tau)$ (in particular, $(M_f, \omega_f, \tau_f)$) are unique topologically. Furthermore, if in advance we fix a Reeb vector field on $M_f$, then the coisotropic embedding is unique "on $M$". Finally, if the Reeb dynamics of both thickenings are conjugate, the embeddings are neighborhood equivalent.
\end{remark}

\subsection{Coisotropic regularization of non-autonomous Lagrangian systems}

{
Let $\pi \colon \mathbf{Q} \to \mathbb{R}$ be a fiber bundle with standard fiber $Q$, which will denote the configuration manifold. As in the autonomous case, we first formally define the geometric structures associated with any non-autonomous Lagrangian $L \in C^\infty(J^1 \pi)$.
\begin{definition}[\textsc{Second Order Differential Equation (SODE)}]
\label{Def: SODE non-autonomous}
A vector field $X \in \mathfrak{X}(J^1 \pi)$ is a \textbf{Second Order Differential Equation (SODE)} field if it correctly relates the position, velocity, and time coordinates. 
%%%
\noindent In natural local coordinates $(q^i, \dot{q}^i, t)$, a general SODE vector field takes the form:
\be
X \,=\, \frac{\partial}{\partial t} + \dot{q}^i \frac{\partial}{\partial q^i} + X^i(q, \dot{q}, t) \frac{\partial}{\partial \dot{q}^i} \,.
\ee
%%%
Intrinsically, this condition is expressed by the two equations:
\be \label{Eq: SODE condition non-autonomous}
(\dd t)(X) \,=\, 1 \,, \quad \text{and} \quad S(X) \,=\, 0 \,,
\ee
where $S$ is the vertical endomorphism on $J^1 \pi$ (see \cref{def:geometry_jet_bundle}).
\end{definition}
\begin{definition}[\textsc{Regular Non-autonomous Lagrangian system}]
\label{Def: Regular Non-autonomous Lagrangian}
A \textbf{non-autonomous Lagrangian system} is a pair $(\mathbf{Q}, L)$, where $L \colon J^1 \pi \to \mathbb{R}$. We define:
\begin{itemize}
    \item The \textbf{Poincaré-Cartan 1-form} $\theta_L := L \dd t + \dd_S L = L \dd t + i_S \dd L$, locally reading:
    \be
    \theta_L \,=\, \left(L - \frac{\partial L}{\partial \dot{q}^i} \dot{q}^i\right)\dd t + \frac{\partial L}{\partial \dot{q}^i} \dd q^i \,.
    \ee
    \item The \textbf{Lagrangian 2-form} $\omega_L := -\dd \theta_L$.
\end{itemize}
%%%
A system is \textbf{regular} if the pair $(\omega_L, \dd t)$ defines a cosymplectic structure on $J^1 \pi$. This happens if and only if the Hessian matrix $\left(W_{ij} = \frac{\partial^2 L}{\partial \dot{q}^i \partial \dot{q}^j}\right)$ is non-singular. 
%%%
In this case, the Euler-Lagrange equations correspond precisely to the unique Reeb dynamics of the cosymplectic manifold $(J^1 \pi, \omega_L, \dd t)$.
\end{definition}
\noindent The following theorem extends \cref{Thm: Helmholtz Conditions} to the cosymplectic context. 
\begin{theo}[\textsc{Characterization of Lagrangian 2-forms}] Let $\pi \colon \mathbf{Q} \longrightarrow \mathbb{R}$ be a fiber bundle. Then, a $2$-form $\omega \in \Omega^2(J^1 \pi)$ is locally a Lagrangian $2$-form if and only if the following conditions hold:
\begin{itemize}
    \item It is closed: $\dd \omega = 0$.
    \item It satisfies $i_{S} \omega = 0$, where $S$ is the vertical endomorphism (equivalently, $\omega(S(X), Y) = \omega(S(Y), X)$).
\end{itemize}
\end{theo}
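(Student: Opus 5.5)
The natural plan is to follow the proof of \cref{Thm: Helmholtz Conditions}, now in natural coordinates $(q^i, \dot{q}^i, t)$ on $J^1\pi$ with $S = (\dd q^i - \dot{q}^i \dd t)\otimes \pdv{\dot{q}^i}$. Necessity is a direct computation. By definition, $\omega_L = -\dd\theta_L$ with $\theta_L = L\,\dd t + i_S \dd L$, so it is closed. For $i_S\omega_L = 0$ we use the contact forms $\theta^i := \dd q^i - \dot{q}^i \dd t$, for which $S^*\theta^i = 0$, $S^*\dd t = 0$ and $S^*\dd\dot{q}^i = \theta^i$. We rewrite $\theta_L = L\,\dd t + \frac{\partial L}{\partial \dot{q}^i}\theta^i$ and expand $\dd\theta_L$ in the coframe $\{\dd t, \theta^i, \dd\dot{q}^i\}$. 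There are two kinds of terms. The term $\frac{\partial^2 L}{\partial\dot{q}^i\partial\dot{q}^j}\dd\dot{q}^j\wedge\theta^i$ is sent by $i_S$ to the symmetric Hessian contracted against $\theta^j\wedge\theta^i$, which vanishes. The two $\dd\dot{q}\wedge\dd t$ pieces, namely $\frac{\partial L}{\partial\dot{q}^i}\dd\dot{q}^i\wedge\dd t$ and $-\frac{\partial L}{\partial \dot{q}^i}\dd\dot{q}^i\wedge\dd t$ (the latter coming from $\dd\theta^i = -\dd\dot{q}^i\wedge\dd t$), cancel. Since the remaining terms only involve $\theta$'s and $\dd t$, they are killed by $i_S$.

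For sufficiency, closedness gives locally $\omega = \dd\beta$. We write $\beta$ in the adapted coframe as $\beta = A_i\theta^i + B_i\dd\dot{q}^i + C\,\dd t$ and then normalize it step by step, as in \cref{Thm: Helmholtz Conditions}.

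First, evaluating $i_S\omega = 0$ on pairs of vertical vectors and on pairs (vertical, $\pdv{q^j}$) gives $\omega(\pdv{\dot{q}^j},\pdv{\dot{q}^k}) = 0$. This says $\partial B_{[j}/\partial\dot{q}^{k]} = 0$, so $B_i = \partial B/\partial\dot{q}^i$ for some function $B$. Subtracting $\dd B$ from $\beta$ removes the $\dd\dot{q}$ part, and $\beta$ becomes $C_i\theta^i + D\,\dd t$.

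Next, evaluating $i_S\omega=0$ on the pair $(\pdv{q^j},\pdv{q^k})$ gives symmetry of $\partial C_j/\partial\dot{q}^k$. Hence $C_j = \partial \Lambda/\partial\dot{q}^j$ for some local function $\Lambda$.

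Now evaluate $i_S\omega = 0$ on pairs involving $\pdv{t}$. Here $S(\pdv{t}) = -\dot{q}^i\pdv{\dot{q}^i}$. This yields a condition of the form $\partial D/\partial\dot{q}^j = \partial(\dot{q}^i C_i)/\partial\dot{q}^j - C_j$ up to terms that vanish by closedness. Substituting $C_j = \partial\Lambda/\partial\dot{q}^j$, it says $\partial_{\dot{q}^j}(D - \dot{q}^i\partial_{\dot{q}^i}\Lambda + \Lambda) = 0$. Therefore $D = \Lambda - \dot{q}^i\partial_{\dot{q}^i}\Lambda + g(q,t)$.

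The leftover $g(q,t)\,\dd t$, together with any $\dot{q}$-independent ambiguity in $\Lambda$, is a form pulled back from $\mathbf{Q}$. We absorb it using closedness. Since $\dd\beta=\omega$ is unchanged by adding exact forms, we can shift $\Lambda$ by $h(q,t)$, which changes $\beta$ by $\dd h$ up to a basic form. Closedness then forces the remaining basic 1-form to be closed, hence locally exact, and it can be discarded. At that point $\beta = \Lambda\,\dd t + \frac{\partial\Lambda}{\partial\dot{q}^i}\theta^i = \theta_\Lambda$, and we set $L = -\Lambda$ or $L = \Lambda$ depending on the sign convention $\omega = \pm\dd\theta_L$.

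The main obstacle is this last stage: getting the $\dd t$-component to match exactly $L - \dot{q}^i\partial_{\dot{q}^i}L$. It requires carefully extracting the $(\pdv{t},\cdot)$ components of the condition $i_S\omega=0$, and using $\dd\omega=0$ to handle the mixed $\dd q\wedge\dd t$ terms that the symmetry condition alone does not control. I would organize that computation by working with the total time derivative $\pdv{t} + \dot{q}^i\pdv{q^i}$, for which $S$ vanishes, instead of $\pdv{t}$ directly.
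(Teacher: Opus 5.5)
Your strategy is the paper's: take a local primitive $\beta$, remove its $\dd\dot q$-part using the (vertical, $\partial_{q^j}$) component of $i_S\omega=0$, obtain $C_j=\partial_{\dot q^j}\Lambda$ from the $(\partial_{q^j},\partial_{q^k})$ component, then fix the $\dd t$-coefficient from the $(\partial_t,\partial_{q^j})$ component and absorb a basic function into the Lagrangian. Working in the contact coframe $\theta^i$ rather than $\dd q^i$ makes no difference, and your necessity argument is correct. The gap is in the last step, which is computed incorrectly.

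With $\beta=C_i\theta^i+D\,\dd t$ you have $\dd\beta=\dd C_i\wedge\theta^i-C_i\,\dd\dot q^i\wedge\dd t+\dd D\wedge\dd t$. Using $S^*\theta^i=S^*\dd t=0$ and $S^*\dd\dot q^i=\theta^i$,
\begin{equation*}
i_S\,\dd\beta=\partial_{\dot q^j}C_i\,\theta^j\wedge\theta^i+\bigl(\partial_{\dot q^j}D-C_j\bigr)\,\theta^j\wedge\dd t\,.
\end{equation*}
So the $(\partial_t,\partial_{q^j})$ condition is exactly $\partial_{\dot q^j}D=C_j=\partial_{\dot q^j}\Lambda$, with no input from closedness. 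It is not $\partial_{\dot q^j}D=\dot q^i\partial_{\dot q^j}C_i$, as you wrote. Hence $D=\Lambda+g(q,t)$. Since $\partial_{\dot q}g=0$, replacing $\Lambda$ by $\Lambda+g$ changes $\theta_\Lambda$ by exactly $g\,\dd t$, so $\beta=\theta_{\Lambda+g}$ on the nose. Then take $L=-(\Lambda+g)$ for the convention $\omega_L=-\dd\theta_L$. This is the paper's step $\widetilde L=L+F$.

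Your formula $D=\Lambda-\dot q^i\partial_{\dot q^i}\Lambda+g$ is the $\dd t$-coefficient in the holonomic coframe $\{\dd q^i,\dd\dot q^i,\dd t\}$ (the paper's $C=L-\dot q^i\partial_{\dot q^i}L+F$), transplanted into your contact coframe. With your $D$, $\beta-\theta_\Lambda=(g-\dot q^i\partial_{\dot q^i}\Lambda)\,\dd t$. In general this depends on $\dot q$ and is neither basic nor closed. So your stated conclusion $\beta=\theta_\Lambda$ does not follow from what you derived. Your proposed repair, shifting $\Lambda$ by $h(q,t)$ and discarding a closed basic $1$-form, cannot remove this discrepancy.

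The appeal to $\dd\omega=0$ at this stage is also misplaced. After your normalization $\omega=\dd\beta$ is already exact. The mixed $\dd q\wedge\dd t$ terms of $\omega$ need no control. The only thing to impose is the vanishing of the $\theta\wedge\dd t$ component of $i_S\omega$ displayed above, and that alone pins $\beta$ down as $\theta_{\Lambda+g}$.
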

\begin{proof} Let us introduce fibered coordinates on $\mathbf{Q}$, which we denote by $(q^i, t)$. Since $\omega$ is closed, it is locally exact so that 
\[
\omega = \dd( A_j \dd q^j + B_j \dd \dot{q}^j + C \dd t)\,.
\]
By computing $i_S\omega_L$, and employing the same argument as in \cref{Thm: Helmholtz Conditions}, we may reduce it to the case where $B_j = 0$, in which case the contraction reads as 
\[
\pdv{A_j}{\dot{q}^i} \left(\dd q^i - \dot{q}^i \dd t\right) \wedge \dd q^j + \pdv{C}{{\dot{q}^i}} \left(\dd q^i - \dot{q}^i \dd t\right) \wedge \dd t\,.
\]
For this to vanish, two conditions need to hold:
\[
\pdv{A_j}{\dot{q}^i} = \pdv{A_i}{\dot{q}^j} \qquad \text{and} \qquad \pdv{A_j}{\dot{q}^i} \dot{q}^i + \pdv{C}{\dot{q}^j} = 0\,.
\]
The first condition implies the existence of $L$ such that $A_j = \pdv{L}{\dot{q}^i}$. Rearranging the second term, this implies that there is a function $F = F(q, t)$ such that 
$
C = L - \pdv{L}{\dot{q}^i} \dot{q}^i + F\,.
$ By making the change $\widetilde L = L +F$, we have that 
\[
\omega = \dd \left( \pdv{\widetilde L}{\dot{q}^i} \dd q^i + \left(\widetilde L - \pdv{\widetilde L}{ \dot{q}^i} \dot{q}^i \right)\dd t\right)\,,
\]
so that it is Lagrangian. Necessity follows trivially.
\end{proof}
\begin{definition}[\textsc{Degenerate Non-autonomous Lagrangian systems}]
\label{Def: Degenerate Non-autonomous Lagrangian}
A non-autonomous Lagrangian system $(\mathbf{Q}, L)$ is \textbf{degenerate} (or \textbf{singular}) if the pair $(\omega_L, \dd t)$ is pre-cosymplectic (i.e., the Hessian matrix $W_{ij}$ is singular).
%%%
A degenerate non-autonomous Lagrangian system is precisely a \textbf{pre-cosymplectic Hamiltonian system} $(J^1 \pi, \omega_L, \dd t)$, but one which carries the "extra" kinematic constraint that its physical dynamics must be a SODE field.
\end{definition}
\noindent As in the autonomous case, this pre-cosymplectic system $(J^1 \pi, \omega_L, \dd t)$ can be either inconsistent or consistent.
}

\paragraph{Inconsistent non-autonomous Lagrangian systems.}
In the presence of an inconsistent system, we may initiate the analogue of the constraint algorithm in the non-autonomous Lagrangian setting. This is obtained by requiring, at each step, the following conditions. Set $F_0 = J^1 \pi$. Then, we iteratively define $F_{k+1}$ to the set of points $m \in M_k$ (where $F_k$ is assumed to be a submanifold) such that there exists a tangent vector {$X_m \in \T_m F_k$ satisfying
\begin{itemize}
    \item \textit{Reeb condition:} $i_{X_m} \omega_{L} = 0$ and $(\dd t)(X_m) = 1$.
    \item \textit{SODE condition:} $S(X_m) \,=\,0$.
\end{itemize}
As in the autonomous case, if the algorithm converges at a finite step, it finds a final constraint manifold $F_f$ where the Lagrangian system becomes consistent, and which is, in general, a subset of the Hamiltonian one, $F_f \subseteq M_f$.
%%%
Now, $(F_f, \omega_L, \dd t)$ may have gauge ambiguity, in which case we can select a product structure adapted to $\mathcal{V} = \ker \omega_\mathcal{L} \cap \ker \dd t$ and apply the cosymplectic coisotropic embedding theorem to remove the ambiguity in the equations. 
%%%
In general $F_f$ fails to be a jet manifold (or, more generally, to have a jet structure), so that there is little to no hope that the regularization $\widetilde F \longrightarrow F_f$ inherits a jet structure making the regularized system Lagrangian.}

\begin{remark}
A similar discussion applies to that of \cref{remark:Gotay_constraint_algorithm}. The geometric version of the constraint algorithm for time-dependent Lagrangians was developed in \cite{chinea}, generalizing the procedure presented, together with the SODE construction. The same relation applies: In general, $S_f$ , namely the final submanifold of the non-autonomous version of the contraint algorithm is a submanifold of $F_f$.
\end{remark}

\paragraph{Consistent non-autonomous Lagrangian systems.} 
When in the presence of a consistent Lagrangian $L \in C^\infty(J^1 \pi)$, where $\pi \colon \mathbf{Q} \longrightarrow \mathbb{R}$ denotes the configuration bundle, we have the existence of global Reeb dynamics which are of second-order, namely we have that there exists $X \in \mathfrak{X}(J^1 \pi)$ satisfying 
\[
i_X \omega_L = 0\,, \qquad (\dd t)(X) = 1\,, \qquad \text{and} \qquad S(X) = 0\,.
\]
However, if $\{0\} \neq \ker \omega_L \cap \ker \dd t$, the dynamics {fails to be uniquely defined}, yielding gauge ambiguities. 
%%%
After applying the pre-cosymplectic coisotropic embedding theorem, we obtain a cosymplectic manifold $(\widetilde M, \widetilde\omega,\widetilde \tau)$ in which $(J^1 \pi, \omega, \tau)$ embeds as a coisotropic submanifold. 
%%%
Here we may wonder whether $\widetilde M$ admits a jet structure in such a way that the cosymplectic manifold $(\widetilde M, \widetilde \omega,\widetilde \tau)$ arises from an extended Lagrangian 
\[
\widetilde L \colon \widetilde M \longrightarrow \mathbb{R}\,.
\]
More particularly, we are looking for a
\begin{itemize}
    \item A fiber bundle $\widetilde{\pi} \colon \widetilde{\mathbf{Q}} \longrightarrow \mathbb{R}$, together with a fiber bundle embedding $\mathbf{Q} \hookrightarrow \widetilde {\mathbf{Q}}$. Denote by $\widetilde S$ the vertical endomorphism on $J^1 \widetilde \pi$. 
    \item A Lagrangian $\widetilde L \colon \widetilde M \longrightarrow \mathbb{R}$ such that it restricts to the original Lagrangian $L$ under the previous inclusion.
    \item A diffeomorphism $\alpha \colon \widetilde M \longrightarrow J^1 \widetilde \pi$ in such a way that the cosymplectic structure $(\widetilde \omega, \widetilde \tau)$ is precisely the cosymplectic structure structure obtained as follows 
    \[
    \widetilde \omega = \dd \left(\widetilde L \dd t + i_{\alpha^\ast \widetilde S} \dd \widetilde L \right)\,, \qquad \tau = \dd t\,.
    \]
\end{itemize}

\subsection{Existence and uniqueness of Lagrangian regularization}

The objective of this section is to prove the existence of a non-autonomous Lagrangian regularization, under some conditions on the gauge ambiguities of $L$. We also discuss the matter of uniqueness, and { although global uniqueness is not guaranteed, as a plethora of Lagrangians may be considered, we prove that any tangent structure on a particular cosymplectic regularization $\widetilde M$ must be `isomorphic on M' to the one we build, given that the Reeb vector fields coincide. Namely, as in the autonomous case, the first order germ of the extension is unique.}\\

{\noindent The main assumption that we will make to endow the regularization with a Lagrangian structure is that the characteristic distribution $\mathcal{V} = \ker \omega_L \cap \ker \dd t$ on $J^1 \pi$ is the complete lift of a vertical (completely integrable) distribution on $\pi \colon \mathbf{Q} \to \mathbb{R}$. This may be thought of as a time-dependent generalization of the case presented in \cref{Sec:Regularization of autonomous systems}. That is, $\mathcal{V} = \undertilde{K}^{C}$.
%%%
Let $\mathbf{Q}$ have local fibered coordinates $(t, x^a, f^A)$, where $x^a$ are coordinates on the leaves of the regular foliation $\mathcal{F}_{\undertilde{K}}$ induced by $\undertilde{K}$, and $f^A$ parameterize the fibers (the distribution $\undertilde{K}$).
%%%
The first jet bundle $J^1 \pi$ has natural coordinates $(t, x^a, f^A, \dot{x}^a, \dot{f}^A)$.
%%%
Under this hypothesis, the characteristic distribution $\mathcal{V}$ is locally spanned by:
\be
\text{span}\left\{ \left(\frac{\partial}{\partial f^A}\right)^C \,=\, \frac{\partial}{\partial f^A},\, \left(\frac{\partial}{\partial f^A}\right)^V \,=\, \frac{\partial}{\partial {\dot{f}}^A}\right\} \,.
\ee
%%%
\noindent The cosymplectic thickening $(\widetilde{M}, \widetilde{\omega}, \widetilde{\tau})$ is constructed as a neighborhood of the zero section in the dual bundle $\mathcal{V}^* \to J^1 \pi$, as described in \cref{Rem: Construction of cosymplectic thickening}.
%%%
}
As in the autonomous case, we assume this thickening coincides with the whole $\mathcal{V}^*$ by assuming that an almost product structure $P$ with a vanishing Nijenhuis tensor can be chosen.
%%%
\noindent On the other hand, the thickened space $\widetilde{M} = \mathcal{V}^*$ can be identified as the cotangent bundle of the foliation $\mathcal{F}_{\mathcal{V}}$ generated by $\mathcal{V}$:
\begin{proposition}
The following canonical isomorphism exists:
\be \label{Eq: thickening=cotangent bundle foliation non-autonomous}
\widetilde{M} \,=\, \mathcal{V}^* \,\simeq\, \mathscr{T}^* \mathcal{F}_{\mathcal{V}} \,:=\, \bigsqcup_{F \in \mathcal{F}_{\mathcal{V}}} \T^* F \,,
\ee
where $F$ denotes a leaf of $\mathcal{F}_{\mathcal{V}}$.
%%%
\begin{proof}
The proof is completely analogous to the symplectic case presented in \cref{Sec: Degenerate Lagrangian systems}, taking into account that the leaves $F$ are exactly the maximal integral manifolds of the characteristic distribution $\mathcal{V}$.
\end{proof}
\end{proposition}
\noindent The coordinates of $\widetilde{M}$ are $(t, x^a, f^A, \dot{x}^a, \dot{f}^A, {\mu_f}_A, {\mu_{\dot{f}}}_A)$, where $({\mu_f}_A, {\mu_{\dot{f}}}_A)$ are the fiber coordinates dual to the kernel generators $\{\frac{\partial}{\partial f^A}, \frac{\partial}{\partial {\dot{f}}^A}\}$.
%%%
We now define a new extended configuration bundle $\widetilde{\pi} \colon \widetilde{\mathbf{Q}} \to \mathbb{R}$.
%%%
We identify $\widetilde{\mathbf{Q}}$ as the cotangent bundle of the foliation $\mathcal{F}_{\undertilde{K}}$, denoted $\widetilde{\mathbf{Q}} := \mathscr{T}^* \mathcal{F}_{\undertilde{K}} \,\equiv\, \undertilde{\mathbf{K}}^*$.
%%%
The manifold $\widetilde{\mathbf{Q}}$ has local coordinates $(t, x^a, f^A, \mu_A)$.
%%%
The first jet bundle of this new space is $J^1 \widetilde{\pi}$, with local coordinates $(t, x^a, f^A, \mu_A, \dot{x}^a, \dot{f}^A, \dot{\mu}_A)$.
%%%

\begin{remark} 
Again, as pointed out in \cref{remark:vanishing_Nijenhuis_tensor}, we can work with a almost product structure without vanishing Nijenhuis tensor, simply restricting to an open subset to obtain regularity.
\end{remark}

\begin{proposition}[\textsc{Tulczyjew isomorphism for jets}]
\label{prop:Tulczyjew_iso_jets}
There exists a canonical isomorphism $\alpha$ that relates $J^1\widetilde{\pi}$ to the thickened space $\widetilde{M}$:
\be
\alpha \colon J^1\widetilde{\pi} \to \widetilde{M} \,.
\ee
\begin{proof}
The proof is an immediate generalization of the theory presented in \cref{Subsec: A Tulczyjew isomorphism for foliations}. Let $j^1_s \mu \in J^1 \widetilde{\pi}$ denote a jet, where $\mu \colon \mathbb{R} \to \undertilde{\mathbf{K}}^*$ is a section, and $X^{C} \in \mathfrak{X}(J^1 \pi)$ denotes a vector field tangent to $\mathcal{V} = \undertilde{K}^{C}$, where $X$ is a vertical vector field taking values in $\undertilde{K}$. Then, it is enough to define the pairing:
\be
\langle j^1_s \mu,\, X^{C} \rangle \,:=\, \frac{\dd}{\dd t}\bigg|_{t = s} \langle \mu,\, X \rangle \,.
\ee
In local coordinates, the isomorphism reads exactly as in the autonomous case \eqref{Eq: Tulczyjew foliations}, simply carrying the time coordinate $t$:

\be
\alpha (t, x^a, f^A, \mu_A, \dot{x}^a, \dot{f}^A, \dot{\mu}_A) \,=\, (t, x^a, f^A, \dot{x}^a, \dot{f}^A, {\mu_f}_A = \dot{\mu}_A, {\mu_{\dot{f}}}_A = \mu_A) \,.
\ee

\end{proof}
\end{proposition}

\begin{remark}
As thoroughly discussed in \cref{Sec: Degenerate Lagrangian systems} for the autonomous scenario, the construction of the regularized 2-form $\widetilde{\omega}$ based solely on the choice of an almost-product structure $P$ on $J^1 \pi$ cannot be adapted so that $\widetilde{\omega}$ is a Lagrangian form with respect to the canonical jet structure on $J^1 \widetilde{\pi}$. There is a fundamental incompatibility between the standard coisotropic approach and the SODE geometry, which carries over identically to this time-dependent setting.
\end{remark}

{\noindent Therefore, to regularize the system, we will proceed exactly as introduced in the pre-symplectic case. 
%%%
We construct the regularized cosymplectic 2-form $\widetilde{\omega}$ on $J^1 \widetilde{\pi}$ by adding a correction term that is Lagrangian by construction. 
%%%
This term takes the form $-\dd\dd_{\widetilde{S}} F$, where $F \in \mathcal{C}^\infty(J^1 \widetilde{\pi})$ is a globally defined smooth function. 
%%%
We then define the extended Lagrangian as:
\be
\widetilde{L} \,=\, L + F \,,
\ee
so that we obtain a global Lagrangian cosymplectic structure $(\widetilde{\omega} = -\dd \dd_{\widetilde{S}} \widetilde{L}, \tau = \dd t)$.
%%%
\noindent The definition of the function $F$, exactly as in the autonomous case, is not canonical and depends on the choice of two specific ingredients:}
\begin{itemize}
    \item An Ehresmann connection $\nabla$ on the bundle $\undertilde{K}^\ast \longrightarrow \mathbf{Q}$ (again given by a splitting of the tangent bundle in vertical and horizontal vectors
    $
    \T \undertilde{K}^\ast  = \mathcal{V} \oplus \mathcal{H}_\nabla
    $) in such a way that the splitting at $\mathbf{Q}$ is the canonical splitting 
    \[
    \T \undertilde{K}^\ast  |_{\mathbf{Q}} = \mathcal{V} \oplus \T \mathbf{Q}\,.
    \]
    \item An almost product structure $P$ on $\mathbf{Q}$, which complements the distribution $\undertilde K$.
\end{itemize}

\begin{remark}[\textsc{Coordinate expressions}]
\label{remark:coordinate_expression_nabla_P}
Locally, we express the components of the connection as
\[
\mathcal{H}_\nabla = \operatorname{span} \left \{ \pdv{t} + \Gamma_A \pdv{\mu_A}\,, \pdv{q^i} + \Gamma_{iA} \pdv{\mu_A}\,, \pdv{f^B} + \Gamma_{BA} \pdv{\mu_A} \right\}\,.
\]
The condition on $\nabla$ inducing the canonical splitting at the zero section is reflected on the $\Gamma$'s vanishing at $\mathbf{Q}$ (again identified via the zero section).
And we express the components of the almost product structure as 
\[
P = P^A \otimes \pdv{f^A} = \left(\dd f^A - Q^A \dd t - P^A_i \dd q^i \right) \otimes \pdv{f^A}\,.
\]
\end{remark}

\noindent  Generalizing the construction made in \cref{Sec: Degenerate Lagrangian systems}, consider the following diagram
% https://q.uiver.app/#q=WzAsOCxbMSwwLCJKXjEgXFx3aWRldGlsZGUgXFxwaSJdLFswLDAsIihcXHVuZGVydGlsZGV7S31eXFx1cGFycm93KV5cXGFzdCJdLFsyLDAsIlxcbWF0aGNhbHtWfSBcXG9wbHVzIFxcbWF0aGNhbHtIfV9cXG5hYmxhID0gXFxUIFxcdW5kZXJ0aWxkZXtLfV5cXGFzdCJdLFsyLDEsIlxcbWF0aGNhbHtWfSJdLFsyLDIsIlxcdW5kZXJ0aWxkZXtLfV5cXGFzdCJdLFswLDEsIkpeMSBcXHBpIl0sWzEsMSwiXFxUIFxcbWF0aGJme1F9Il0sWzEsMiwiXFx1bmRlcnRpbGRle0t9Il0sWzAsMiwiaV97XFx3aWRldGlsZGVcXHBpfSIsMCx7InN0eWxlIjp7InRhaWwiOnsibmFtZSI6Imhvb2siLCJzaWRlIjoiYm90dG9tIn19fV0sWzIsMywicF9cXG1hdGhjYWx7Vn0iXSxbMyw0XSxbMSw1LCJcXHRhdSIsMl0sWzUsNiwiaV9cXHBpIiwyLHsic3R5bGUiOnsidGFpbCI6eyJuYW1lIjoiaG9vayIsInNpZGUiOiJ0b3AifX19XSxbNiw3LCJQIiwyXSxbMCwxLCJcXGFscGhhIiwyXV0=
\[\begin{tikzcd}[cramped]
	{(\undertilde{K}^C)^\ast} & {J^1 \widetilde \pi} & {\mathcal{V} \oplus \mathcal{H}_\nabla = \T \undertilde{K}^\ast} \\
	{J^1 \pi} & {\T \mathbf{Q}} & {\mathcal{V}} \\
	& {\undertilde{K}} & {\undertilde{K}^\ast}
	\arrow["\tau"', from=1-1, to=2-1]
	\arrow["\alpha"', from=1-2, to=1-1]
	\arrow["{i_{\widetilde\pi}}", hook', from=1-2, to=1-3]
	\arrow["{p_\mathcal{V}}", from=1-3, to=2-3]
	\arrow["{i_\pi}"', hook, from=2-1, to=2-2]
	\arrow["P"', from=2-2, to=3-2]
	\arrow[from=2-3, to=3-3]
\end{tikzcd}\,,\]
where the map $\tau \colon (\undertilde{K}^C)^\ast \longrightarrow J^1 \pi$ is the canonical projection and the arrow $\mathcal{V} \longrightarrow \undertilde{K}^\ast$ is the usual identification of the vertical bundle with the fiber of a vector bundle. Define the map 
\[
F^{P, \nabla} \colon J^1 \widetilde \pi \longrightarrow \mathbb{R}
\]
by
\[
F^{P,\nabla}(\xi) = \langle (P \circ i_\pi \circ \tau \circ \alpha)(\xi), (p_\mathcal{V} \circ i_{\widetilde\pi}) (\xi) \rangle\,,
\]
where $\xi \in J^1 \widetilde \pi$, and $\langle \cdot , \cdot \rangle$ denotes the natural pairing between $\undertilde{K}$ and $\undertilde{K}^\ast$.

\begin{remark}[\textsc{Local expression of} $F^{P, \nabla}$] Using the coordinate components of $\nabla$ and $P$ from \cref{remark:coordinate_expression_nabla_P}, we have that 
\[
F^{P, \nabla} = \left( \dot{\mu}_A - \Gamma_A - \dot{q}^i \Gamma_{iA} - \dot{f}^B \Gamma_{BA}\right) \cdot \left( \dot{f}^A - Q^A - \dot{q}^jP^A_j\right)\,.
\]
\end{remark}

\noindent We then have the following:

\begin{theo}[\textsc{Lagrangian coisotropic embedding}]
\label{thm:Lagrangian_pre-cosymplectic_regularization}
Let $L \colon J^1 \pi \longrightarrow \mathbb{R}$ be a singular Lagrangian, where $\pi \colon \mathbf{Q} \longrightarrow \mathbb{R}$ denotes a configuration bundle. Suppose that $L$ is consistent and that the characteristic distribution $K = \ker \omega_L \cap \ker \dd t$ is the complete lift of a vertical distribution $\undertilde K$ on $\mathbf{Q}$. Then given an Ehresmann connection $\nabla$ and an almost product structure $P$ as above the embedding 
\[
J^1 \pi \hookrightarrow (\undertilde{K}^C)^\ast \cong J^1 \widetilde \pi
\]
is a coisotropic embedding on a neighborhood of $J^1 \pi$ for the cosymplectic structure $(\omega_{\widetilde L}, \dd t)$, where $\widetilde L = L + F^{P, \nabla}$.
\end{theo}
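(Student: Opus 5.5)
I will follow the same strategy as in the proof of \cref{thm:Lagrangian_coisotropic_embedding_symplectic}, adapted to the jet setting. The claim has three parts. First, $(\omega_{\widetilde L}, \dd t)$ is cosymplectic on a neighbourhood of $J^1\pi$ inside $J^1\widetilde\pi \cong (\undertilde{K}^C)^\ast$. Second, it pulls back to $(\omega_L,\dd t)$ along the zero section. Third, $J^1\pi$ is coisotropic. Closedness needs no argument, since $\omega_{\widetilde L} = -\dd\theta_{\widetilde L}$ is exact and $\dd t$ is closed. Since nondegeneracy is an open condition, it suffices to check $\dd t \wedge \omega_{\widetilde L}^{\,n+2r}\neq 0$ at points of the zero section $\mu_A = \dot\mu_A = 0$. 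Here $n$ is the fibre dimension of $\pi$ minus $r$, and the total dimension of $J^1\widetilde\pi$ is $2(n+2r)+1$.

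The main step is a coordinate computation of the Poincar\'e--Cartan form
\[
\theta_{\widetilde L} = \widetilde L\,\dd t + i_{\widetilde S}\dd\widetilde L,
\qquad \widetilde S = (\dd q - \dot q\,\dd t)\otimes\pdv{\dot q} + (\dd f^A-\dot f^A\dd t)\otimes\pdv{\dot f^A} + (\dd\mu_A - \dot\mu_A\dd t)\otimes\pdv{\dot\mu_A},
\]
using the local expression of $F^{P,\nabla}$. Write $\Theta^A := \dot f^A - Q^A - \dot q^j P^A_j$. Since $\partial F/\partial\dot\mu_A = \Theta^A$ and $\partial F/\partial \dot f^A = \dot\mu_A + O(\Gamma)$, we get
\[
\theta_{\widetilde L} = \theta_L + \Theta^A(\dd\mu_A - \dot\mu_A\dd t) + \dot\mu_A(\dd f^A - \dot f^A\dd t) + F\,\dd t + (\text{terms carrying a factor }\Gamma\text{ or }\dot\mu) .
\]
Here the extra terms are semibasic with respect to $J^1\widetilde\pi \to \widetilde{\mathbf Q}$, up to multiples of $\dd t$. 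On the zero section all $\Gamma$'s vanish, and so do $\mu_A$ and $\dot\mu_A$. Differentiating and restricting there, I expect
\[
\omega_{\widetilde L}\big|_{J^1\pi} = \tau^\ast\omega_L + \dd\mu_A\wedge\dd\dot f^A + \dd f^A\wedge \dd\dot\mu_A + (\text{terms in }\dd\mu_A\wedge\dd q,\ \dd\mu_A\wedge\dd t,\ \dd\dot\mu_A\wedge \dd q,\ \dd\dot\mu_A\wedge\dd t,\ \dots),
\]
where the remaining terms contain at least one factor among $\dd t$, $\dd q^j$, $\dd f^A$ and no $\dd\dot f$. On $J^1\pi$ the form $F\,\dd t$ vanishes to first order in $\dot\mu$, so it contributes only $\Theta^A\dd\dot\mu_A\wedge\dd t$-type terms. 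These are harmless once wedged with $\dd t$.

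Nondegeneracy then reduces to linear algebra. Consider $\ker\omega_{\widetilde L}\cap\ker\dd t$ at a point of the zero section, and decompose a vector $v$ in the frame $\partial_q, \partial_{\dot q}, \partial_f, \partial_{\dot f}, \partial_\mu, \partial_{\dot\mu}$ (the $\partial_t$ component is excluded by $\dd t(v)=0$). The step I expect to be the real obstacle is the following. One must show that the block pairing $(\mu,\dot\mu)$ with $(\dot f, f)$ is invertible and that $\omega_L$ is nondegenerate on a complement of $\mathcal V$ inside $\ker \dd t$. In addition, the cross terms must be triangular with respect to this splitting. I would handle this by ordering the unknowns. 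First, contracting with $\partial_{\dot f}$ and $\partial_f$ isolates $v^{\mu}$ and $v^{\dot\mu}$, because $\omega_L$ kills $\mathcal V$; hence both vanish. Next, the remaining terms restrict to $\omega_L$ on $\ker\dd t$, whose kernel is exactly $\mathcal V = \operatorname{span}\{\partial_f,\partial_{\dot f}\}$. This forces the $(q,\dot q)$ components to vanish. Finally, pairing with $\dd\mu$ and $\dd\dot\mu$ kills the $f$ and $\dot f$ components. Consistency of $L$ guarantees that a Reeb field exists, so $\ker\omega_{\widetilde L}$ is one-dimensional and transverse to $\ker\dd t$. The pair is therefore cosymplectic near $J^1\pi$.

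The restriction statement follows because $F^{P,\nabla}$ vanishes on the zero section together with its $\dd q$, $\dd t$ and $\dd\dot q$ derivatives. Indeed, $F$ is bilinear in $\dot\mu - \Gamma\cdot(\ldots)$ and $\Theta$, and $\Gamma|_{\mathbf Q}=0$. Hence the pullback of $\theta_{\widetilde L}$ is $\theta_L$ and the pullback of $\omega_{\widetilde L}$ is $\omega_L$, while $\dd t$ pulls back to $\dd t$. For coisotropy, note that $J^1\pi$ has codimension $2r$. Its $\omega_{\widetilde L}$-orthogonal within $\ker\dd t$ consists of vectors annihilating $\T(J^1\pi)$. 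By the block form above, these are exactly the vectors spanned by $\partial_f$ and $\partial_{\dot f}$, that is, $\mathcal V$, together with the Reeb direction, and all of these lie in $\T(J^1\pi)$. This is the required coisotropy condition.
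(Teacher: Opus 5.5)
Your proposal is correct and follows essentially the paper's route. Like the paper, you compute $\theta_{\widetilde L}$, restrict to the zero section to isolate $\tau^*\omega_L + \dd\mu_A\wedge\dd\dot f^A + \dd f^A\wedge\dd\dot\mu_A$, show the remaining terms cannot destroy nondegeneracy of $(\omega_{\widetilde L},\dd t)$, and read off the pull-back and coisotropy along $J^1\pi$; your explicit triangular elimination is more explicit than the paper's appeal to ``semi-basic terms''. One small bookkeeping slip: $-\dd\Theta^A\wedge\dd\mu_A$ also produces $P^A_j\,\dd\dot q^j\wedge\dd\mu_A$, which contains none of the factors $\dd t$, $\dd q^j$, $\dd f^A$ you list, but because it contains $\dd\mu_A$ it is killed once $v^{\mu}=0$ and changes nothing in your argument.
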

\begin{proof} Follows with a similar discussion as in the symplectic ase. We will first show that $\T \left( J^1 \widetilde \pi\right)\big |_{J^1 \pi}$ is a cosymplectic vector bundle, so that the pair $(\widetilde \omega, \dd t)$ defines a cosymplectic structure on some neighborhood of $J^1 \pi$. Indeed
\begin{align}
    \dd_{\widetilde S} \widetilde L =& \dd_S L + \dot{f}^A \dd \mu_A + \dot{\mu}_A \dd f^A\\
    &- \left(\Gamma_{iA}(\dot{f}^A - Q^A - \dot{q}^j P^A_j) + P^{A}_i(\dot{\mu}_A - \Gamma_A - \dot{q}^i \Gamma_{iA} - \dot{f}^B \Gamma_{BA})\right) \dd q^i \\
    & - \left(\Gamma_{BA} (\dot{f}^A - Q^A - \dot{q}^j P^A_j)  + \Gamma_A + \dot{q}^i \Gamma_{iA} + \dot{f}_B \Gamma_{BA}\right)\dd f^A\\
    &- (Q^A + \dot{q}^j P^A_j) \dd \mu_A\,.
 \end{align}
So that
\[
\dd \dd_{\widetilde S} \widetilde L = \omega_L + \dd \dot{f}^A \wedge \dd  \mu_A + \dd {\dot{\mu}_A} \wedge \dd f^A + \text{(semi-basic terms)}\,.
\]
Notice that the first three terms on the right hand side, together with the $1$-form $\dd t$, define a cosymplectic structure. 
%%%
Since adding semi-basic terms (with respect to the projection onto $\mathbf{Q}$) does not change regularity, we have that $\T \left( J^1 \widetilde \pi\right)\big |_{J^1 \pi}$ is a cosymplectic vector bundle. Finally, notice that it is a coisotropic embedding as, again, 
\[
\dd_{\widetilde S} L\big |_{J^1 \pi} = \dd_S L\,.
\]
\end{proof}

\noindent Hence, we conclude the discussion of the existence of Lagrangian regularization in the cosymplectic scenario. Again, we may ask about its uniqueness. A similar discussion applies. However, we encounter one natural obstruction to uniqueness, which is the arbitrariness of the Reeb vector field on $J^1 \pi$. Indeed, recall (\cref{thm:uniqueness_coisotropic_precosymplectic}) that uniqueness 'on $J^1 \pi$' for the coisotropic embedding is guaranteed provided the Reeb vector field is fixed. Nevertheless, once this is taken into account, we have that the Lagrangian regularization is rigid to first order:

\begin{theo}[\textsc{Uniqueness of Lagrangian coisotropic embedding to first order}]
\label{thm:precosymplectic_Lag_uniqueness}
Let $(\widetilde M_i, \widetilde \omega_i, \tau_i)$, be cosymplectic regularizations of $(J^1 \pi, \omega_L, \dd t)$, with $i = 1,2$. Suppose that $\widetilde M_i$ is endowed with a jet structure {$(\widetilde S_i, \tau_i)$} in such a way that $i_{\widetilde S_i} \widetilde \omega_i= 0$, and the embedding 
\[
J^1 \pi \hookrightarrow \widetilde M_i
\]
preserves the jet structure. If the induced Reeb vector fields (which are tangent to $J^1 \pi$) coincide, there exists neighborhoods $U_i$ of $J^1\pi$ in $\widetilde M_i$ and a diffeomorphism 
\[
\psi \colon U_1 \longrightarrow U_2
\]
that is the identity on $J^1 \pi$ such that the induced map
\[
\psi_\ast \colon \T \widetilde M_1 \big|_{J^1 \pi} \longrightarrow \T \widetilde M_2 \big |_{J^1 \pi}
\]
preserves all tensors, namely $\psi_\ast (\widetilde \omega_1, \widetilde \tau_1, \widetilde S_1) = (\widetilde \omega_2, \widetilde \tau_2, \widetilde S_2)$ on $J^1 \pi$.
\end{theo}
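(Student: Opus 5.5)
The plan is to mirror the proof of \cref{thm:uniqueness_sympelctic_first_order}, adding the time direction and the Reeb vector field, and using \cref{thm:uniqueness_coisotropic_precosymplectic} for the cosymplectic part.

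First, the hypothesis that the Reeb vector fields $R_1, R_2$ coincide on $J^1\pi$ lets us apply the second item of \cref{thm:uniqueness_coisotropic_precosymplectic}. This gives neighborhoods $U_i$ and a diffeomorphism $\psi_0$, equal to the identity on $J^1\pi$, whose differential is an isomorphism of cosymplectic vector bundles $\T\widetilde M_1|_{J^1\pi}\to \T\widetilde M_2|_{J^1\pi}$. It remains to correct $\psi_0$ to first order so that it also intertwines $\widetilde S_1$ and $\widetilde S_2$. Since only the first-order germ matters, it suffices to build a vector bundle isomorphism $\Phi\colon \T\widetilde M_1|_{J^1\pi}\to\T\widetilde M_2|_{J^1\pi}$ that is the identity on $\T(J^1\pi)$ and preserves $(\widetilde\omega_i,\widetilde\tau_i,\widetilde S_i)$. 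Any such $\Phi$ is the differential along $J^1\pi$ of some diffeomorphism between tubular neighborhoods, for instance by composing exponential maps of auxiliary connections. So the problem reduces to fiberwise linear algebra.

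For the linear algebra, decompose $\T(J^1\pi)=\mathcal V\oplus W\oplus\langle R\rangle$. Here $\mathcal V=\undertilde K^C$ has rank $2r$, and $W$ is the tangent lift of a complement $\undertilde H$ of $\undertilde K$ in the vertical bundle of $\mathbf Q$. Completed by $R$, the decomposition is $S$-compatible, because $S(R)=0$ for the SODE Reeb field and $S(X^C)=X^V$. Then $\omega_L|_W$ is nondegenerate. Inside $\T\widetilde M_i|_{J^1\pi}$, take $E_i:=W^{\perp,\widetilde\omega_i}\cap\ker\widetilde\tau_i$. On $\ker\widetilde\tau_i$ the form $\widetilde\omega_i$ is symplectic, so $E_i$ is symplectic of rank $4r$ and contains $\mathcal V$ as a Lagrangian subbundle. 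The symmetry $i_{\widetilde S_i}\widetilde\omega_i=0$, together with $\widetilde\tau_i(\Im\widetilde S_i)=0$ and the hypothesis $\widetilde S_i|_{\T J^1\pi}=S$, shows as in the autonomous case that $E_i$ is $\widetilde S_i$-invariant. One point needs care here: $S$ involves $\dd q-\dot q\,\dd t$, so we must check that $\widetilde S_i$ maps $\ker\widetilde\tau_i$ into itself and that its value on $R$ is controlled. Next we apply \cref{Lemma: Lagrangian complement trick}, together with \cref{remark:complement_symplectic_vector_bundles,remark:Existence_of_J_invariant_complement}, to $J=\widetilde S_i|_{E_i}$. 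This produces an invariant Lagrangian complement identified with $\mathcal V^*$, which gives the splitting $\T\widetilde M_i|_{J^1\pi}\cong W\oplus\mathcal V\oplus\mathcal V^*\oplus\langle R\rangle$. In the adapted frame, the forms read $\widetilde\omega=\omega_L+\dd\mu_A\wedge\dd f^A+\dd\dot\mu_A\wedge\dd\dot f^A$ and $\widetilde\tau=\dd t$ on $J^1\pi$.

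The main obstacle is the final coefficient analysis showing that $\widetilde S$ is uniquely determined in this frame. As in the autonomous proof, I would write the most general extension of $S$ with unknown coefficients in $\dd\mu_B,\dd\dot\mu_B$. The new feature is the possible $\dd t$-components, since $\widetilde S$ must restrict to $(\dd q^i-\dot q^i\dd t)\otimes\partial_{\dot q^i}$ on $J^1\pi$. I would then impose three conditions: $i_{\widetilde S}\widetilde\omega=0$, $\widetilde S^2=0$ together with $\dd t(\Im\widetilde S)=0$, and invariance of $\mathcal V^*$. Comparing coefficients of $\dd\dot f^A$, $\dd f^A$, $\dd x^a$ and $\dd\mu\wedge\dd\dot\mu$ should force $I=0$, $H=0$, $\dot H=-\mathrm{id}$, $F=0$ and $G=0$. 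It should also make $\dot G$ symmetric, with $\dot G$ then killed by the invariance of $\mathcal V^*$. What I must verify is that the $\dd t$-terms produced by $i_{\widetilde S}\omega_L$ cancel against the constraint $i_{\widetilde S}\widetilde\omega=0$, in the same way the jet condition $[S,S]=2\dd t\wedge S$ already forces on $J^1\pi$. Once the frames are constructed from the common data $W$, $\mathcal V$ and $R$, the resulting unique normal form shows that the map matching the two adapted frames is the desired $\Phi$.
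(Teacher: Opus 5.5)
Your architecture matches the paper's proof. It uses the splitting $\T J^1\pi=\mathcal V\oplus W\oplus\langle R\rangle$ with $W$ a tangent lift, and the rank-$4r$ symplectic bundle $E_i=W^{\perp,\widetilde\omega_i}\cap\ker\widetilde\tau_i$ containing $\mathcal V$ as an $\widetilde S_i$-stable Lagrangian subbundle. It then applies the Lagrangian complement lemma to get $\mathcal V\oplus\mathcal V^*$, writes a normal form, proves uniqueness of $\widehat S$, and realizes the bundle map by tubular neighborhoods. (The appeal to \cref{thm:uniqueness_coisotropic_precosymplectic} for $\psi_0$ is superfluous, since you rebuild the cosymplectic isomorphism anyway, as the paper does.)

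\textbf{The normal form is wrong as written.} In the natural coordinates in which you write $S=(\dd q^i-\dot q^i\dd t)\otimes\partial_{\dot q^i}$, the form $\omega_L+\dd\mu_A\wedge\dd f^A+\dd\dot\mu_A\wedge\dd\dot f^A$ does not annihilate the Reeb field $R=\partial_t+\dot x^a\partial_{x^a}+\dot f^A\partial_{f^A}+R^a\partial_{\dot x^a}+Q^A\partial_{\dot f^A}$. Its contraction with $R$ is $-\dot f^A\dd\mu_A-Q^A\dd\dot\mu_A$. The paper's normal form contains the extra term $-(\dot f^A\dd\mu_A+Q^A\dd\dot\mu_A)\wedge\dd t$, and that term is exactly what makes the $\dd t$-pieces cancel.

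With your form, the coefficient analysis has no solution. The $\dd\dot\mu_B\wedge\dd f^A$ coefficients force $\dot H=-\mathrm{id}$. After that, $i_{\widehat S}(\dd\dot\mu_A\wedge\dd\dot f^A)$ leaves the term $-\dot f^A\,\dd\dot\mu_A\wedge\dd t$, and nothing can cancel it: $H_A\dd t$ and $I_A\dd t$ only produce $\dd t\wedge\dd f^A$ and $\dd t\wedge\dd\dot f^A$. By contrast, the $\dd t$-terms you were worried about in $i_{\widehat S}\omega_L$ are harmless. They come multiplied by $F$ and $\dot F$, which the $\dd x^b$-coefficients already kill.

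\textbf{The repair.} Work in the frame $(R,W,\mathcal V,\mathcal V^*)$ and its dual coframe, in which $\dd f^A$ and $\dd\dot f^A$ are replaced by $\dd f^A-\dot f^A\dd t$ and $\dd\dot f^A-Q^A\dd t$. In that coframe your normal form is correct, and $S$ has no $\dd t$-components at all, because $S(R)=0$.

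You assert $S(R)=0$ (``the SODE Reeb field''), but it is not among the hypotheses. It does follow from them. For every $Y$ we have $\widetilde\omega_i(\widetilde S_iR,Y)=\widetilde\omega_i(\widetilde S_iY,R)=0$, and also $\widetilde\tau_i(\widetilde S_iR)=0$. Hence $\widetilde S_iR\in\ker\widetilde\omega_i\cap\ker\widetilde\tau_i=0$.

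In this frame the computation is literally the autonomous one. In fact, invariance of $\mathcal V^*$ together with the symmetry condition tested against $\partial_{f^B}$ and $\partial_{\dot f^B}$ already gives $\widehat S(\partial_{\mu_A})=0$ and $\widehat S(\partial_{\dot\mu_A})=-\partial_{\mu_A}$, with no further coefficient chasing.
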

\begin{proof}Denote by $\mathcal{V} = \ker \omega_L \cap \ker \dd t$ the characteristic distribution. Let $R$ denote the induced Reeb vector field on $J^1 \pi$ (by any of the embeddings), and let $W$ be a distribution on $J^1 \pi$ such that $\T J^1 \pi = \mathcal{V} \oplus \operatorname{span} \left \{ R \right \} \oplus W$ and such that $S(W) \subseteq W$ (this can be achieved by taking a complete lift). Then, $(W, \omega_L)$ is a symplectic vector bundle, and its $(\widetilde \omega_i, \widetilde \tau_i)$-orthogonal \[
W^{\perp, \widetilde \omega_i, \widetilde \tau_i} := \{v \in \T \widetilde M_i \colon i_v \widetilde \tau_i = 0 \text{ and } \widetilde \omega_i\left(v, W \right) = 0\}
\]
is as well and satisfies $\widetilde S(W^{\perp,  \widetilde \omega_i, \widetilde \tau_i}) \subseteq W^{\perp}$. Indeed, the first property is an elementary consequence of symplectic linear algebra and the second follows by the compatibility of $\widetilde S$ with $\widetilde \omega_i$, as we have 
\[
\widetilde \omega_i \left( \widetilde S(W^{\perp,  \widetilde \omega_i, \widetilde \tau_i}), W\right) = \widetilde \omega_i(\widetilde S(W) , W^{\perp,  \widetilde \omega_i, \widetilde \tau_i})  = \widetilde \omega_i(S(W), W^{\perp,  \widetilde \omega_i, \widetilde \tau_i}) = 0\,,
\]
since we are requiring $S(W) \subseteq W$.\\

\noindent Notice that, by construction, $\mathcal{V} \subseteq W^{\perp, \widetilde \omega_i}$ and $\mathcal{V}$ is isotropic. Let us show that it is actually Lagragian, by computing its dimension. Suppose $\dim J^1 \pi = 2(n+r) + 1$, where $2r$ is the rank of $\mathcal{V}$. Then, since $J^1 \pi \hookrightarrow \widetilde M_i$ is coisotropic, 
\[
2r = \rank( \T (J^1 \pi)^{\perp, \widetilde \omega_1, \widetilde \tau_i}) = \dim \widetilde M_i - \dim J^1 \pi\,,
\]
we conclude $\dim \widetilde M_i = \dim J^1 \pi + 2r$. Now, $\rank W = 2n$, so that $\rank W^{\perp, \widetilde \omega_i, \widetilde \tau_i} = 2n + 4r - 2n = 4r$. Finally, since $\rank \mathcal{V} = 2r$, we conclude that it is Lagrangian. By usual techniques (see \cref{Lemma: Lagrangian complement trick}), we can build a vector bundle symplectomorphism 
\[
\Phi \colon W^{\perp, \widetilde \omega_i, \widetilde \tau_i} \longrightarrow \mathcal{V} \oplus \mathcal{V}^\ast\,,
\]
so that we obtain the isomorphism 
\[
\T \widetilde M_i\big|_{J^1 \pi}  = \operatorname{span}\{ R\} \oplus W \oplus W^{\perp, \widetilde \omega_i, \widetilde \tau_i} \cong \operatorname{span}\{ R\} \oplus W \oplus \mathcal{V} \oplus \mathcal{V}^\ast\,,
\]
and such that, by defining $\widehat{S}_i = \Phi \circ \widetilde S_i \circ \Phi $, we have $\widehat{S}_i(\mathcal{V}^\ast) \subseteq \mathcal{V}^\ast$. The previous isomorphism of vector bundles is clearly the identity on $\T J^1 \pi = \operatorname{span} \left\{ R\right \} \oplus W \oplus \mathcal{V}$. Now, since 
\[
\operatorname{span}\{ R\} \oplus W \oplus \mathcal{V} \oplus \mathcal{V}^\ast = \T \mathcal{V}^\ast |_{J^1 \pi}\,,
\]
we may choose a neighborhood $V$ of $J^1 \pi$ in $\mathcal{V}^\ast$ (identified as the zero section) and two diffeomorphisms 
$
\psi_i \colon U_i \longrightarrow V\,,
$
where $U_i$ are neighborhoods of $J^1 \pi$ in $\widetilde M_i$, and in such a way that it induces the isomorphism above. Clearly, the above identification makes $(\psi_i)_\ast$ an isomorphism of cosymplectic vector bundles on over $J^1 \pi$. It only remains to show that it induces an isomorphism of jet structures as well. This, employing the same technique as in \cref{thm:uniqueness_sympelctic_first_order}, namely we will show that under the previous construction, there exists a unique tensor $\widehat S$ that satisfies the following properties (all consequence of the construction presented):
\begin{itemize}
    \item It is a \emph{jet structure}: $\widehat S^2 $ = 0.
    \item It makes $\omega$ \emph{Lagrangian}: $i_S \omega = 0$.
    \item It \emph{extends} the vertical endomorphism of $J^1 \pi$, which we denote by $S$.
    \item It satifies $\widehat{S}(\mathcal{V}^\ast) \subseteq\mathcal{V}^\ast$.
\end{itemize}
In particular $(\psi_{i})_\ast \widetilde S_i = \widehat{S}$, and the diffeomorphism $\psi_2^{-1} \circ \psi_1$ necessarily preserves the jet structure on $J^1 \pi$.\\

\noindent Indeed, by taking adapted coordinates $(t, x^a, f^A, \dot{x}^a, \dot{f}^A)$ on $J^1 \pi$ and coordinates $(\mu_A, \dot{\mu}_A)$ in the fibers of $\mathcal{V}^\ast$, we have that the canonical form on $\mathcal{V}^\ast |_{J^1 \pi}$ reads as 
\[
\omega = \omega_L + \dd \mu_A \wedge \dd f^A + \dd \dot{\mu}_A \wedge \dd \dot{f}^A {-(\dot{f}^A  \dd {\mu}_A + Q^A\dd \dot{\mu}_A)\wedge \dd t}\,,
\]
where $R = \pdv{t} + \dot{x}^a \pdv{x^a}+ \dot{f}^A \pdv{f^A} + R^a \pdv{\dot{x}^a} +Q^A\pdv{\dot{f}^A}$ is the chosen Reeb vector field. A general $(1,1)$ tensor $\widehat S$ extending $S =\left( \dd x^a - \dot{x}^a \dd t \right) \otimes \pdv{\dot{x}^a} + (\dd f^A - \dot{f}^A \dd t) \otimes \pdv{\dot{f}^A}$ takes the following expression
\begin{align}
    \widehat{S} =& \left(\dd x^a - \dot{x}^a \dd t + F^{iA} \dd \mu_A + \dot{F}^{iA} \dd \dot{\mu}_A\right) \otimes \pdv{\dot{x}^a}\\
    &+ \left(\dd f^A - \dot{f}^A\dd t + G^{AB} \dd \mu_B + \dot{G}^{AB} \dd \dot{\mu}_B \right) \otimes\pdv{\dot{f}^A} 
    \\
    &+ \left(H^{B}_A \dd \mu_B + \dot{H}^{B}_A \dd \dot{\mu}_B  + H_A \dd t\right) \otimes \pdv{\mu_A}\\
    &+ \left( I^{B}_A \dd \mu_B + \dot{I}^{B}_A \dd \dot{\mu}_B + I_A \dd t\right) \otimes \pdv{\dot{\mu}_A}\,.
\end{align}
It follows by similar computations that the three properties above imply
\[
\widehat S = (\dd x^a - \dot{x}^a \dd t)\otimes \pdv{q^i} + (\dd f^A - \dot{f}^A \dd t) \otimes \pdv{\dot{f}^A}  - \dd \dot{\mu_A} \otimes \pdv{\mu_A}\,,
\]
Finally, defining $\psi := \psi_2^{-1} \circ \psi_1$, we have that it preserves the cosymplectic structure by construction and that 
\[
\psi_\ast (\widetilde S_1) = (\psi_2^{-1})_\ast \left( (\psi_1)_{\ast} \widetilde S_1\right) = (\psi_2^{-1})^\ast \widehat S = \widetilde S_2\,,
\]
which shows that it preserves the jet structure as well.
\end{proof}
\subsection{Examples}

\subsubsection{Trivialized bundles}

Here we deal with configurations bundles $\pi \colon\mathbf{Q} \longrightarrow \mathbb{R}$ which are trivialized, namely that we choose a bundle isomorphism $\mathbf{Q} \cong Q \times \mathbb{R} \longrightarrow \mathbb{R}$. Then, this splitting induces a diffeomorphism $J^1 \pi \cong \T Q \times \mathbb{R}$ and, in particular, any Lagrangian $L \colon J^1 \pi \longrightarrow \mathbb{R}$ simply reads as a time-dependent Lagrangian 
\[
L \colon \T Q \times \mathbb{R} \longrightarrow \mathbb{R}.
\]
We study in this section how the (Lagrangian) regularization procedure behaves in the trivialized case. First notice that, by definition, vertical and complete lifts correspond to vertical and complete lifts on $\T Q$ after trivializing. The Poincaré--Cartan form still reads (in natural coordinates $(q^i, \dot{q}^i, t)$) as 
\[
\theta_L = \left( L - \pdv{L}{\dot{q}^i} \dot{q}^i \right)\dd t + \pdv{L}{\dot{q}^i} \dd q \,.
\]

\begin{remark}[\textsc{Hypothesis on the characteristic distribution}]
Recall that to obatain a Lagrangian regularization, we imposed the condition on the characteristic distribution 
\[
\mathcal{K} := \ker \omega_L \cap \ker \dd t
\]
to be the complete lift of a distribution vertical distribution on $\mathbf{Q} \longrightarrow \mathbb{R}$. This may be stated using the trivialized bundle as follows. First notice that the characteristic distribution $\mathcal{K}$ on the trivialized bundle $\T Q \times \mathbb{R} \longrightarrow \mathbb{R}$ may be thought of as time-dependent completely integrable distribution on $\T Q$. Denote by $\mathcal{K}_t$ the distribution at time $t$. Then, the condition on $\mathcal{K}$ to be the complete lift of a vertical distribution $\undertilde{K}_t$ on $\mathbf{Q} \longrightarrow \mathbb{R}$ translates to $\mathcal{K}_t$ being a complete lift of a completely integrable distribution $\undertilde{K}_t$, for every $t$. Incidentally, $\undertilde{K}$ is simply the gluing of all $\undertilde{K}_t$.
\end{remark}

Now, if we are in the case above, we may wonder whether on the bundle $\undertilde{K}^\ast \longrightarrow \mathbb{R}$ we have a natural trivialization. This would be the case if the `time dependent' distribution $\undertilde{K}_t$ is constant but, otherwise, would fail to hold. We could also ask whether, although $\undertilde{K}_t$ is not constant, they are all isomorphic, in the sense that there is a smooth family of diffeomorphisms 
\[
\psi_t \colon Q \longrightarrow Q\,, \qquad \psi_0 = \operatorname{id}_Q
\]
such that $\undertilde{K}_t = (\psi_t)_\ast( K_0)$.  This, again, does not hold in general, as the following example shows:

\begin{example}[\textsc{Lagrangian on trivial bundle with non-contstant characteristic distribution}] The following example, albeit artifical, shows the existence of time-dependent Lagrangians on a trivialized bundle \[
L \colon \T Q \times \mathbb{R} \longrightarrow \mathbb{R}
\]
such that $\mathcal{V} = \ker \omega_L \cap \ker \dd t = (\undertilde K)^C$, for certain distribution $\undertilde K$ on $Q \times \mathbb{R}$ which is \emph{not constant}. Here, ``not constant'' means that there is not a time dependent family of diffeomorphisms $\psi_t \colon Q \longrightarrow Q$ such that $\psi_{t}^\ast (\undertilde{K}|_{Q \times \{t\}}) = K|_{Q \times \{0\}}$, essentially forcing the jet bundle point of view presented.

\noindent On $\mathbb{S}^1$, let 
\[
\exp \colon \mathbb{R} \longrightarrow\mathbb{S}^1 \,, \qquad t \mapsto (\cos t, \sin t)
\]
denote the exponential and define $I_t$ to be the image of the interval $\left[- \frac{t^2}{1 + t^2}, \frac{t^2}{1+t^2}\right]$ under $\exp$, namely \[
I_t := \exp\left (\left[-\frac{t^2}{1 + t^2}, \frac{t^2}{1+t^2}\right]\right)\,.
\]
We clearly have that $I_t$ is diffeomorphic to the closed interval $[0,1]$ for $t \neq 0$ and only a point for $t = 0$. Define 
\[
Q_t := (\mathbb{S}^1 \times \mathbb{S}^1) \setminus (I_t \times \{p\})\,,
\]
for a fixed point $p \in \mathbb{S}^1$. Standard techniques of differential topology \cite{milnor} show that there is a family of embeddings 
\[
\psi_t \colon (\mathbb{S}^1 \times \mathbb{S}^1) \setminus (\{p\} \times \{p\}) \longrightarrow \mathbb{S}^1 \times \mathbb{S}^1 \,,
\]
that varies smoothly with $t \in \mathbb{R}$ and such that it defines a diffeomorphism with $Q_t$, for each $t$. Define the following Lagrangian 
\[
\widetilde L \colon \T \left( \mathbb{S}^1 \times \mathbb{S}^1\right) \longrightarrow \mathbb{R}\,, \qquad \widetilde L(\theta_1, \theta_2, \dot{\theta_1}, \dot{\theta_2}) = \frac{(\dot{\theta_1})^2}{2}\,,
\]
where $(\theta_1, \theta_2)$ denote angular (local) coordinates on the torus, and $(\dot{\theta}_1, \dot{\theta}_2)$ denote the induced global coordinates on $\T (\mathbb{S}^1 \times \mathbb{S}^1)$. Let $Q:= (\mathbb{S}^1 \times \mathbb{S}^1) \setminus (\{p\} \times \{p\})$ and define the following time-dependent Lagrangian:
\[
L \colon \T Q \times \mathbb{R} \longrightarrow \mathbb{R} \,, \qquad L(v, t) := \widetilde L((\psi_t)_\ast v)\,.
\]
Locally, after a change of coordinates, the previous Lagrangian is precisely $\widetilde L$, but not globally. Indeed, in $t = 0$, the characteristic distribution of $L$ has precisely one non-compact leaf. This no longer holds for $t \neq 0$, which shows that the characteristic distribution $\undertilde{K}$ cannot be made constant after a global time dependent family of diffeomorphisms.
\end{example}

\subsubsection{Degenerate metrics}

Let us deal with the example of degenerate metrics. A canonical example of an autonomous degenerate Lagrangian is that of a degenerate metric, namely a symmetric and positive semidefinite tensor $g$ on a manifold $M$. Given such a tensor, we may study its kinetic energy
\[
L \colon \T M\longrightarrow \mathbb{R} \,, \qquad L(v) := \frac{1}{2}g(v,v)\,.
\]
In the autonomous realm we can generalize this in two ways:
\begin{itemize}
    \item A degenerate metric on a manifold $M$, which is time dependent, say $g_t$ for $t \in \mathbb{R}$ and study its time-dependent energy
    \[
    L \colon \T M \times \mathbb{R} \longrightarrow\mathbb{R}\,, \qquad L(v, t):= \frac{1}{2} g_t(v,v)\,.
    \]
    \item Or, given a fiber bundle $\pi \colon\mathbf{Q} \longrightarrow \mathbb{R}$, to work with a (possibly degenerate) metric $\mathbf{g}$ on $\mathbf{Q}$, together with the energy
    \[
    L\colon J^1 \pi \longrightarrow \mathbb{R}\,, \qquad L(j^1_t \gamma) := \mathbf{g} \left(\gamma_\ast \pdv{t}, \gamma_\ast \pdv{t} \right)\,.
    \]
\end{itemize}
The latter has the advantage of including the first as a particular case and, also, allowing for potentials. Indeed, in general, and for fibered coordinates $(q^i, t)$ on $\mathbf{Q}$, we have 
\[
\mathbf{g} := g_{ij} \dd q^i \dd q^j + 2A_i \dd t \dd q^i - 2V \dd t^2\,.
\]
Then, with natural coordinates $(q^i, \dot{q}^i, t)$ on $J^1 \pi$, the Lagrangian reads as
\[
L = \frac{1}{2}g_{ij} \dot{q}^i \dot{q}^j + A_i \dot{q}^i - V\,.
\]
This Lagrangian yields the equations for the movement of a {charged (with charge $1$)} particle on $Q$ (the standard fiber) under a{n electric} potential $V$ {and} {magnetic} \red{potential $A_i \dd q^i$} {in a curved (by the metric $g_{ij}$) space}.  Let us study its regularity. First, notice that its Poincaré--Cartan form is 
\[
\theta_L = L \dd t +\dd_S L = \left(\frac{1}{2} g_{ij} \dot{q}^i \dot{q}^j + V \right) \dd t + (g_{ij} \dot{q}^i +A_i) \dd q^i\,,
\]
so that 
\begin{align}
    \omega_L &= - \dd \theta_L\\
    &= g_{ij} \dd q^i \wedge \dd \dot{q}^j - \pdv{g_{ij}}{q^k} \dot{q}^j \dd q^k \wedge \dd q^i - \pdv{A_i}{q^j} \dd q^j \wedge \dd q^i\\
    &\quad\,- \left( \dd (\frac{1}{2} g_{ij} \dot{q}^j \dot{q}^j + V) + \left(\pdv{g_{ij}}{t} \dot{q}^i + \pdv{A_i}{t}\right)\dd q^i\right)\wedge  \dd t
\end{align}

Hence, it is immediate to see that

\begin{proposition} Let $Q_t = \pi^{-1}(t)$ denote the fiber of $\pi \colon \mathbf{Q} \longrightarrow \mathbb{R}$ and let $g_t$ denote the restriction of $\mathbf{g}$ to $Q_t$. Then, the Lagrangian $L$ is regular if and only if each metric $g_t$ is non-degenerate. 
\end{proposition}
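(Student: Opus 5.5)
The plan is to read the regularity of $(\omega_L, \dd t)$ directly off the local expression of $\omega_L$ computed above, working pointwise on $J^1\pi$ in natural coordinates $(q^i, \dot q^i, t)$. Recall that $(\omega_L, \dd t)$ is cosymplectic precisely when $\dd t \wedge \omega_L^{d} \neq 0$, where $d$ is the fiber dimension. Equivalently, $\omega_L$ restricted to $\ker \dd t$ is non-degenerate, i.e.\ $\ker \omega_L \cap \ker \dd t = \{0\}$. By the characterization in \cref{Def: Regular Non-autonomous Lagrangian}, this amounts to non-singularity of the Hessian $W_{ij} = \partial^2 L/\partial \dot q^i \partial \dot q^j$. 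So the strategy is to compute this Hessian and identify it with $g_t$.

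The Hessian computation is immediate from $L = \tfrac12 g_{ij}\dot q^i \dot q^j + A_i \dot q^i - V$: the linear and potential terms drop out, and we obtain $W_{ij}(q,\dot q,t) = g_{ij}(q,t)$. Next we identify this matrix with $g_t$. The coefficients $g_{ij}(q,t)$ are exactly the components of $\mathbf g$ evaluated on the vertical vectors $\partial/\partial q^i$. These vectors span $\T Q_t$ at points of $\pi^{-1}(t)$, so $(g_{ij}(\cdot,t))$ is the matrix of $g_t = \mathbf g|_{Q_t}$ in the coordinates $q^i$ on $Q_t$. Hence $W$ is non-singular at $j^1_t\gamma$ if and only if $g_t$ is non-degenerate at $\gamma(t)$. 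The cross term $2A_i\,\dd t\,\dd q^i$ and the $\dd t^2$ term play no role, since they involve $\dd t$.

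To avoid simply quoting the Hessian criterion, I would also verify it directly from the displayed $\omega_L$. Restrict to $\ker \dd t$: every term containing $\wedge \dd t$ drops out, leaving
\[
g_{ij}\,\dd q^i\wedge \dd \dot q^j + B_{ki}\,\dd q^k\wedge \dd q^i ,
\]
with $B$ a matrix built from $\partial g$, $\dot q$ and $\partial A$. In the ordered basis $(\partial_q, \partial_{\dot q})$, this is a block matrix of the form $\begin{pmatrix} * & g \\ -g & 0 \end{pmatrix}$. Its determinant is $\det(g)^2$, up to sign, so it is non-degenerate if and only if $\det g_{ij}\neq 0$. Equivalently, $\dd t\wedge\omega_L^{d}$ is a nonzero multiple of $\det(g_{ij})\,\dd t\wedge \dd^d q\wedge \dd^d\dot q$.

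The only point requiring care, rather than a genuine obstacle, is that regularity of $L$ is a condition at every point of $J^1\pi$. Since $g_{ij}$ does not depend on $\dot q$, and every point of $Q_t$ is the base of some jet, pointwise non-degeneracy over all of $J^1\pi$ is exactly non-degeneracy of every $g_t$ at every point of $Q_t$, for all $t$. This gives both directions of the equivalence.
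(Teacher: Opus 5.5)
Your proposal is correct and follows the paper's argument. The paper simply reads regularity off the local expression of $\omega_L$: on $\ker \dd t$ the only $\dd q\wedge\dd\dot q$ block is $g_{ij}\,\dd q^i\wedge\dd\dot q^j$, equivalently the Hessian is $W_{ij}=g_{ij}$, which is the coordinate matrix of $g_t$ on $Q_t$. Your block-determinant computation and your remark that $J^1\pi\to\mathbf{Q}$ is surjective just make explicit what the paper calls ``immediate''.
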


Now, let us study the consistency conditions when $g_t$ is not definite positive (so that $(\omega_L, \dd t)$ is no longer a cosymplectic manifold). Here, letting $X$ denote a SODE field 
\[
X = \pdv{t} + \dot{q}^i\pdv{q^i} + X^i \pdv{\dot{q}^i}
\]
and imposing $i_{X} \omega_L = 0$ we get the equations 
\begin{align}
    -g_{ij} X^j =& \frac{1}{2}\left(  \pdv{g_{ij}}{q^k} + \pdv{g_{kj}}{q^i} - \pdv{g_{ki}}{q^j}\right) \dot{q}^j \dot{q}^k +\left( \pdv{A_i}{q^j} - \pdv{A_j}{q^i} + \pdv{g_{ij}}{t}\right) \dot{q}^j \\
    & + \pdv{V}{q^i} + \pdv{A_i}{t}\,.
\end{align}
By applying the constraint algorithm, if the Lagrangian is degenerate (hence the metric), we may contract on both sides by a vector $W = W^i \pdv{q^i}$ taking values in the characteristic distribution 
\[
C_t  = \{w \in \T Q_t : i_w g_t = 0\}
\]
to get the following consistency conditions:
\begin{align}
    0 =& \frac{w^i}{2}\left( \pdv{g_{ij}}{q^k} + \pdv{g_{kj}}{q^i} - \pdv{g_{ki}}{q^j}\right) \dot{q}^j \dot{q}^k + w^i\left( \pdv{A_i}{q^j} - \pdv{A_j}{q^i} + \pdv{g_{ij}}{t}\right) \dot{q}^j \\
    & + w^i \left( \pdv{V}{q^i} + \pdv{A_i}{t}\right)\,.
\end{align}
This, in general, imposes new conditions that we need to investigate further. Nevertheless, it gives us sufficient and necessary conditions for $L$ to be consistent. Indeed, if the above equations are trivially satisfied, by taking derivatives with respect to $\dot{q}^i$ two times we obtain the following conditions on the metric:
\begin{align}
\label{eq:consistency_1}
    w^i\left( \pdv{g_{ij}}{q^k} + \pdv{g_{kj}}{q^i} - \pdv{g_{ki}}{q^j}\right) & = 0\\
    \label{eq:consistency_2}
    w^i\left( \pdv{A_i}{q^j} - \pdv{A_j}{q^i} + \pdv{g_{ij}}{t}\right) & = 0\\
    \label{eq:consistency_3}
    w^i \left( \pdv{V}{q^i} + \pdv{A_i}{t}\right)& = 0\,,
\end{align}
for every $W  = w^i \pdv{q^i}$ taking values in the characteristic distribution. 
\begin{remark}[\textsc{Geometric conditions for $L$ to be consistent}] Suppose that we choose a trivialization $\mathbf{Q} = Q \times \mathbb{R}$. Then, the metric $\mathbf{g}$ is specified by a choice of 
\begin{itemize}
    \item A time dependent metric on $Q$, which we denote by $g$.
    \item A time dependent $1$-form on $Q$, which we denote by $A$.
    \item A time dependent potential $V$ on $Q$, which we denote by $V$. 
\end{itemize}
Denote by $C_t$ the characteristic distribution on $Q$, for every $t$. Then, the previous conditions read as 
\begin{align}
    \Lie_{W} g &= 0\,,\\
    i_W \left( \pdv{g}{t} - \dd A\right) &= 0\,,\\
    i_W \left(\dd V + \pdv{A}{t}\right) &=0\,,
\end{align}
for all $W$ taking values in the characteristic distribution.
\end{remark}

To deal with the issue of Lagrangian regularization, as discussed, we need to focus on consistent Lagrangians, so that henceforth we assume the previous conditions to hold. Then, we need to study when the characteristic distribution $\mathcal{V} = \ker \omega_L \cap \ker \dd t$ is the complete lift of a vertical integrable distribution on $\mathbf{Q}$. If it were the case, it is clear that the choice of the vertical distribution on $\mathbf{Q}$ would be the disjoint union of the characteristic distributions of each $(Q_t, g_t)$
\[
\undertilde{K} = \bigsqcup_{t \in \mathbb{R}} C_t\,,
\]
as we would need $\undertilde{K}^v$ to be the vertical elements in $\mathcal{V}$. 
Hence, we only need to find conditions on $C_t$ that, together with the above consistency conditions, guarantee that $\mathcal{V} = \undertilde{K}^{C}$.
\begin{proposition} If the Lagrangian defined by the metric $\mathbf{g}$ is consistent, we have that $\mathcal{V} = \undertilde{K}^{C}$ if and only if $C_t$ is integrable and the {$1$-form $A$} satisfies $i_{C_t} \dd A = 0$.
\end{proposition}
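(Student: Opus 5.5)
The plan is to compute $\mathcal{V} = \ker \omega_L \cap \ker \dd t$ directly in fibered coordinates from the explicit expression of $\omega_L$ given above, and to compare it with $\undertilde{K}^{C}$, where $\undertilde{K} = \bigsqcup_t C_t$. Pointwise, choose coordinates $(t, x^a, f^A)$ adapted so that $C_t = \operatorname{span}\{\pdv{f^A}\}$. If $C_t$ is not integrable this is only possible as a frame $W_A = w_A^i \pdv{q^i}$ spanning $C_t$, and I will work with such a frame.

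First I determine the vertical part of $\mathcal{V}$. A vector $Z^i\pdv{\dot q^i}$ lies in $\ker\omega_L$ iff $g_{ij}Z^j = 0$ and its contraction with the $\dd t$ part vanishes. The latter is $g_{ij}\dot q^i Z^j$, which is zero again. Hence $\mathcal{V}\cap \ker \dd\pi = \undertilde{K}^V$ automatically.

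Next I test a general element of $\ker \dd t$ of the form $Y = Y^i\pdv{q^i} + Z^i\pdv{\dot q^i}$. Contracting $\omega_L$ and requiring the $\dd \dot q^j$ coefficient to vanish gives $g_{ij}Y^i = 0$, so $Y^i\pdv{q^i}$ takes values in $C_t$. Write $Y^i = w^i$. The $\dd q^k$ coefficients then impose
\[
g_{kj}Z^j = w^i\Big(\pdv{g_{ij}}{q^k}-\pdv{g_{kj}}{q^i}\Big)\dot q^j\;(\text{sym.\ part}) + w^i\Big(\pdv{A_k}{q^i}-\pdv{A_i}{q^k}\Big) ,
\]
up to sign conventions. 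The $\dd t$ coefficient gives a further scalar condition. Using the consistency conditions \eqref{eq:consistency_1}--\eqref{eq:consistency_3}, that is, $\Lie_W g = 0$, $i_W(\pdv{g}{t} - \dd A)=0$ and $i_W(\dd V + \pdv{A}{t}) = 0$, I expect the $\dd t$ condition to reduce to the $\dd q^k$ ones. I also expect the $g$-terms to combine into $-g_{kj}\,\partial_j w^{\,\cdot}\dot q^{\,\cdot}$-type expressions. Concretely, $\Lie_W g = 0$ should turn the metric part into exactly the condition that $Z$ agrees, modulo $C_t$, with the vertical component $\dot q^k\partial_k w^i$ of $W^{C}$. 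What is left over is the term $(\dd A)(W,\cdot)$, and this must be in the image of $g$ restricted to the transverse directions.

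From this the equivalence follows. ($\Leftarrow$) Suppose $i_{C_t}\dd A = 0$. Then for each section $W$ of $\undertilde K$, $W^{C}$ solves the system, so $\undertilde K^{C}\subseteq \mathcal V$. Integrability of $C_t$ makes $\undertilde K^{C}$ a genuine distribution of rank $2r$. Counting ranks, $\mathcal V$ projects onto $\undertilde K$ with fibre $\undertilde K^V$, so both have rank $2r$ and they coincide. ($\Rightarrow$) Suppose $\mathcal V = \undertilde K^{C}$. Then $\mathcal V$ is involutive, being the kernel of closed forms. The brackets $[W_1^C, W_2^C] = [W_1,W_2]^C$ lying in $\mathcal V$, together with projecting to $\mathbf Q$, force $[W_1,W_2]\in \undertilde K$, so $C_t$ is integrable. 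Finally, plugging $W^{C}$ into the $\dd q^k$ equation with the metric part cancelled gives $(\dd A)(W,\cdot)\in \operatorname{Im} g$. Pairing this with $C_t$ and using the freedom to add $\undertilde K^V$ then yields $i_{W}\dd A = 0$.

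The main obstacle is the middle step: showing that, under $\Lie_W g=0$, the metric contribution in the $\dd q^k$ equation is exactly the one produced by the complete lift of $W$. This requires handling $W$ with non-constant components in non-adapted coordinates. A second difficulty is the last implication, where I must rule out a component of $i_W \dd A$ transverse to $C_t$ being absorbed by $g$. There I would use that the vertical part of $\mathcal V$ is already all of $\undertilde K^V$, so any extra absorption would enlarge $\mathcal V$ beyond rank $2r$.
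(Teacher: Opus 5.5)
Your strategy matches the paper's: compute $\ker\omega_L\cap\ker\dd t$ directly and compare it with $\undertilde{K}^{C}$. The paper, however, works in adapted fibred coordinates $(t,x^a,f^A)$. There $\Lie_W g=0$ just says $\partial_{f^A}g_{ab}=0$, the complete lift of $\partial_{f^A}$ is $\partial_{f^A}$ itself, and one only contracts $\omega_L$ with $\partial_{f^A}$ and $\partial_{\dot f^A}$; the $\dd t$-part vanishes by $i_W(\dd V+\partial_t A)=0$. Your ``main obstacle'' is therefore largely an artefact of working in a general frame. What your route adds is the involutivity argument for the necessity of integrability, which the paper does not give. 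That argument is correct, although integrability is already forced by consistency: $(\Lie_{W_1}g)(W_2,\cdot)=-g([W_1,W_2],\cdot)$ for sections $W_1,W_2$ of $C_t$.

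The step that needs repair is your identification of the vertical part of $W^{C}$ as $\dot q^j\partial_j w^i\,\partial_{\dot q^i}$. On $J^1\pi$ the complete lift of $W=w^i(q,t)\partial_{q^i}$ is $w^i\partial_{q^i}+(\partial_t w^i+\dot q^j\partial_j w^i)\partial_{\dot q^i}$. After using $\Lie_W g=0$, your $\dd q^k$ equation reads $g(Z-\dot q^j\partial_j W,\cdot)=-i_W\dd A$. Insert the correct lift and use $g(\partial_t W,\cdot)=-i_W\partial_t g$, obtained by differentiating $g(W,\cdot)=0$ in $t$. The equation becomes $i_W(\partial_t g-\dd A)=0$, which is exactly the second consistency condition.

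So, with the paper's $\undertilde{K}^{C}$, consistency alone gives $\undertilde{K}^{C}\subseteq\mathcal V$, hence equality. Now take a trivialization in which $C_t$ moves with $t$, e.g.\ $L=\tfrac12\dot x^2$ on $\mathbb{R}^2\times\mathbb{R}$ rewritten in fibred coordinates rotating with $t$. Your ($\Rightarrow$) step would assert $i_{C_t}\dd A=0$ there, yet $i_W\dd A=-g(\partial_t W,\cdot)\neq 0$.

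Your computation is right for the fixed-$t$ complete lift on $\T Q$. It is also right in coordinates where the frame of $C_t$ is $t$-independent, which is what the paper implicitly uses. But in those coordinates $g_{Ak}\equiv 0$, so $\partial_t g_{Ak}=0$ and $i_{C_t}\dd A=0$ is literally the second consistency condition.

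There are two ways to fix this:
\begin{itemize}
\item Fix adapted coordinates from the start. Integrability is available in both directions: by hypothesis in ($\Leftarrow$), and by your bracket argument in ($\Rightarrow$).
\item Keep $\partial_t w^i$ and state relative to which trivialization $A$ is taken.
\end{itemize}

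Finally, your ``second difficulty'' is not one in your setting. If $W^{C}\in\mathcal V$, substituting its own vertical part makes the left-hand side of the $\dd q^k$ equation vanish identically, so the right-hand side vanishes at once, with no rank argument needed.
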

\begin{proof} If each distribution $C_t$ (and hence $\undertilde{K}$) is integrable, we can find adapted coordinates $(x^a, f^A, t)$ on $\mathbf{Q}$ in such a way that \[
C_t = \operatorname{span}\left \{\pdv{f^A}\right\}
\]
By definition, since $C_t$ is the characteristic distribution, the metric $\mathbf{g}$ reads as 
\[
\mathbf{g} = g_{ab} \dd x^a \dd x^b + 2 A \dd t - 2 V \dd t^2\,, \
\]
where $A$ denotes a $1$-form on each fiber.
Since $L$ is consistent (by applying \cref{eq:consistency_1}), we have that $g_{ab}$ only depends on $x^a$, so that its Poincaré--Cartan form reads as 
\[
\theta_L = \left( \frac{1}{2}g_{ab} \dot{x}^a \dot{x}^b + V\right) \dd t + g_{ab} \dot{x}^a \dd x^b + A\,.
\]
Hence, 
\[
\omega_L = \{\text{terms that only depend on $x^a$}\} - \dd A - \pdv{A}{t}\wedge \dd t - \dd V \wedge \dd t
\]
Contracting by an arbitrary element in 
\[
\undertilde{K}^C = \operatorname{span}\left \{\pdv{f^A}, \pdv{\dot{f}^A} \right \}\,
\]
and taking into account the compatibility condition of Eq. \cref{eq:consistency_3} we conclude the result.
\end{proof}

\begin{remark} A different, but equivalent way of stating that $C_t$ is an integrable distribution and that $\Lie_W g_t = 0$, if $W \in \Gamma(C_t)$, is to require the existence of an adapted, torsionless connection $\nabla_t$ to the metric $g_t$, for every $t$. If such a connection can be chosen for every $t$, it is not so complicated to show that it can be chosen so that $\nabla_t$ varies smoothly.
\end{remark}
Now, under the conditions of the above result, as we showed, the regularization procedure recovers a Lagrangian system. Here, we may employ the existence of an adapted connection to build the connection on the bundle 
\[
\undertilde{K}^\ast = \operatorname{span}\left \{\pdv{\mu_A}\right\} \longrightarrow \mathbf{Q}\,,
\]
but not so much with the product structure. In local, adapted coordinates to the characteristic distribution $C$, an arbitrary connection $\nabla$ (identified with its linear splitting of the tangent bundle $\T M$) takes the following local expression
\begin{align}
    \T M =& \operatorname{span}\left \{ \pdv{\dot{x}^a}, \pdv{\dot{f}^A}\right\} \oplus \mathcal{H}_\nabla\,, 
\end{align}
where 
\[
\mathcal{H}_\nabla =  \operatorname{span}\left \{ \pdv{x^a} + \Gamma_{ab}^c \dot{x}^b \pdv{\dot{x}^c} + \left(\Gamma^{A}_{ab}\dot{x}^b + \Gamma^{A}_{a B} \dot{f}^B \right) \pdv{\dot{f}^A}\,, \pdv{f^A} + \left(\Gamma_{AB}^C\dot{f}^B+ \Gamma_{Aa}^C \dot{x}^a\right) \pdv{\dot{f}^C} \right\}\,.
\]
Since $\nabla_t (g_t) = 0$, we also have $\nabla_t \left(\Gamma(C_t) \right) \subseteq \Gamma(C_t)$, so that, in particular, there is an induced linear connection on the bundle
$
\undertilde{K} \longrightarrow \mathbf{Q}
$. In particular, it induces a dual linear connection on $\undertilde{K}^\ast$, which we denote by $\nabla^\ast$. Identified as a spitting of the tangent bundle, it reads as follows:
\[
\T \undertilde{K}^\ast = \mathcal{V} \oplus\operatorname{span}\left \{ \pdv{x^a} - \Gamma_{aB}^C \mu_C \pdv{\mu_B}\,, \pdv{f^A} - \Gamma_{AB}^C \mu_A \pdv{\mu_C}\right \}\,.
\]
Finally, given an almost product structure on $\mathbf{Q}$, say 
\[
\T \mathbf{Q} = \operatorname{span}\left \{ \pdv{f^A}\right \} \oplus \operatorname{span}\left \{\pdv{t} + Q^A \pdv{f^A}\,, \pdv{x^a} + P^A_a \pdv{f^A}\right \}\,,
\]
we obtain the regularized Lagrangian 
\[
\widetilde L = \frac{1}{2}g_{ab} \dot{x}^a \dot{x}^b + A_a \dot{x}^a + A_A \dot{f}^A - V + \left( \dot{f}^A - Q^A - P^A_a \dot{x}^a\right) \left( \dot{\mu}_A + \dot{x}^a \mu_B \Gamma_{aA}^B + \dot{f}^B\mu_C \Gamma_{BA}^C \right) 
\]

\section{Conclusions and further work}

In this paper, we have developed a method for regularizing singular time-dependent Lagrangian systems. To do so, we first analysed in detail the method developed by \textit{A. Ibort} and \textit{J. Marín-Solano} \cite{ibort-marin} for singular time-independent Lagrangian systems, improving some of their results, namely explicitly constructing a global regular Lagrangian with the auxiliary help of a connection rather than a Riemannian metric in \cref{thm:Lagrangian_coisotropic_embedding_symplectic} and proving that the embedding is unique to first order in \cref{thm:uniqueness_sympelctic_first_order}. Since the basis of the construction was the coisotropic embedding theorem in pre-symplectic manifolds, our method has been based on the coisotropic embedding for pre-cosymplectic manifolds, which yields the analogue of the previous results, namely \cref{thm:Lagrangian_coisotropic_embedding_symplectic} and \cref{thm:uniqueness_coisotropic_precosymplectic}, respectively. The other key ingredients have been the use of almost-product structures adapted to the singularity of the Lagrangian, which facilitates the use of the constraints algorithm for singular Lagrangians; and the use of Tulczyjew triples adapted to a foliation, which allows for the regularization to inherit a natural tangent (or jet) structure.

This paper opens some new and interesting research lines that we aim to discuss in coming papers:

\begin{itemize}

\item Extend the regularization problem for singular contact systems; a constraint algorithm has been recently developed in \cite{lainz}.

\item Extend the geometric approach to the inverse problem for implicit equations of \cite{Schiavone-InverseProblemElectrodynamics-2024, Schiavone-InverseProblemImplicit-2024} to time-dependent implicit differential equations.

\item A main issue is the extension of the regularization problem for singular classical field theories, due to the complexity of multisymplectic geometry. In such a case, we should develop a covariant theory on the space of solutions \cite{aitor, Ciaglia-DiCosmo-Ibort-Marmo-Schiavone-Zampini-Peierls1-2024, Ciaglia-DiCosmo-Ibort-Marmo-Schiavone-Zampini-Symmetry-2022}.

\item Other interesting research purposes are the following ones: the case when we have symmetries for the Lagrangian function, or the regularization of the Hamilton--Jacobi equation \cite{vaquero}, and furthermore, the discretization of the original singular Lagrangian and its relation with the regularized one.

\end{itemize}

\section*{Acknowledgements}

We acknowledge financial support of the 
{\sl Ministerio de Ciencia, Innovaci\'on y Universidades} (Spain), grant PID2022-125515NB-C21; we also acknowledge financial support from the Severo Ochoa Programme for Centers of Excellence in R\&D  and Grant CEX2023-001347-S funded by
MICIU/AEI/10.13039/501100011033.
Pablo Soto 
also acknowledges a JAE-Intro scholarship for undergraduate students. Rubén Izquierdo-López
wishes to thank the Spanish Ministry of Science, Innovation and Universities
for the contract FPU/02636.

\bibliographystyle{abbrv}
{\small
\bibliography{sample.bib}
}

\end{document}